\definecolor{darkred}{RGB}{139,0,0}
\definecolor{darkblue}{RGB}{0,0,139}
\definecolor{darkgreen}{RGB}{0,100,0}
   \def\MR#1{}
\newcommand\smallsquare{{\mathbin{\text{\raise0.17ex\hbox{\scalebox{.7}{$\blacksquare$}}}}}}
\newcommand{\circled}[1]{\raisebox{.5pt}{\textcircled{\raisebox{-.9pt} {#1}}}}
\newcommand{\SO}{\ensuremath{\mathrm{SO}}}
\newcommand{\BO}{\ensuremath{\mathrm{BO}}}
\newcommand{\Fr}{\ensuremath{\mathrm{Fr}}}
\newcommand{\Diff}{{\ensuremath{\mathrm{Diff}}}}
\newcommand{\Map}{\ensuremath{\mathrm{Map}}}
\newcommand{\CE}{\ensuremath{\mathrm{CE}}}
\newcommand{\CbE}{\ensuremath{\smash{\mathrm{C\overline{E}}}}}
\newcommand{\E}{\ensuremath{\mathrm{E}}}
\newcommand{\I}{\ensuremath{\mathrm{I}}}
\newcommand{\CI}{\ensuremath{\mathrm{CI}}}
\newcommand{\CB}{\ensuremath{\mathrm{CB}}}
\newcommand{\CF}{\ensuremath{\mathrm{CF}}}
\newcommand{\Conf}{\ensuremath{\mathrm{Conf}}}
\newcommand{\BC}{\ensuremath{\mathrm{BC}}}
\newcommand{\sCE}{\ensuremath{\mathrm{sCE}}}
\newcommand{\C}{\ensuremath{\mathrm{C}}}
\newcommand{\interior}{\ensuremath{\mathrm{int}}}
\newcommand{\closure}{\ensuremath{\mathrm{closure}}}
\newcommand{\inc}{\ensuremath{\mathrm{inc}}}
\newcommand{\res}{\ensuremath{\mathrm{res}}}
\newcommand{\Inj}{\ensuremath{\mathrm{Inj}}}
\DeclareMathAlphabet{\mathpzc}{OT1}{pzc}{m}{it}
\newcommand{\catsingle}[1]{\ensuremath{\mathcal{#1}}}
\newcommand{\oO}{\ensuremath{\mathrm{O}}}
\newcommand{\bfH}{\ensuremath{\mathbf{H}}}
\newcommand{\bfR}{\ensuremath{\mathbf{R}}}
\newcommand{\bfZ}{\ensuremath{\mathbf{Z}}}
\newcommand{\bfQ}{\ensuremath{\mathbf{Q}}}
\newcommand{\cP}{\ensuremath{\catsingle{P}}}
\newcommand{\hofib}{\ensuremath{\mathrm{hofib}}}
\newcommand{\hocolim}{\ensuremath{\mathrm{hocolim}}}
\newcommand{\holim}{\ensuremath{\mathrm{holim}}}
\newcommand{\ra}{\rightarrow}
\newcommand{\lra}{\longrightarrow}
\newcommand{\xra}[1]{\xrightarrow{#1}}
\newcommand{\xlra}[1]{\overset{#1}{\longrightarrow}}
\newcommand{\xlla}[1]{\overset{#1}{\longleftarrow}}
\renewcommand{\Top}{\mathrm{Top}}
\newcommand{\im}{\mathrm{im}}
\newcommand{\id}{\mathrm{id}}
\newcommand{\Sect}{\mathrm{Sect}}
\newcommand{\pr}{\mathrm{pr}}
\newcommand{\TOP}{{\mathrm{Top}}}
\newcommand{\PL}{{\mathrm{PL}}}
\newtheorem{bigthm}{Theorem}
\newtheorem{bigcor}[bigthm]{Corollary}
\newtheorem{thm}{Theorem}[section]
\newtheorem{lem}[thm]{Lemma}
\newtheorem{prop}[thm]{Proposition}
\newtheorem{cor}[thm]{Corollary}
\theoremstyle{definition}
\newtheorem{convention}[thm]{Convention}
\theoremstyle{remark}
\newtheorem{ex}[thm]{Example}
\newtheorem*{nex}{Example}
\newtheorem{rem}[thm]{Remark}
\newtheorem*{history}{Historical remark}
\newcommand{\ul}[1]{\underline{#1}}
\begin{document}

\title{Stability of concordance embeddings}

\author{Thomas Goodwillie}
\address{Department of Mathematics, Brown University, 151 Thayer St., Providence RO 02912, USA}
\email{thomas\_goodwillie@brown.edu}

\author{Manuel Krannich}
\address{Department of Mathematics, Karlsruhe Institute of Technology, 76131 Karlsruhe, Germany}
\email{krannich@kit.edu}

\author{Alexander Kupers}
\address{Department of Computer and Mathematical Sciences, University of Toronto Scarborough, 1265 Military Trail, Toronto, ON M1C 1A4, Canada}
\email{a.kupers@utoronto.ca}

\begin{abstract}
We prove a stability theorem for spaces of smooth concordance embeddings. From it we derive various applications to spaces of concordance diffeomorphisms and homeomorphisms.
\end{abstract}

\maketitle

\tableofcontents

\section{Introduction}

Let $P\subset M$ be a compact submanifold of a smooth $d$-dimensional manifold $M$ such that $P$ meets $\partial M$ transversely. Writing $I\coloneq [0,1]$, a \emph{concordance embedding} of $P$ into $M$ is a smooth embedding $e\colon P \times I \hookrightarrow M \times I$ such that \begin{enumerate}
 	\item $e^{-1}(M \times \{i\})=P \times \{i\}$ for $i=0,1$ and
 	\item $e$ agrees with the inclusion on a neighbourhood of $P \times \{0\}\cup (\partial M\cap P)\times I\subset P\times I$.
\end{enumerate}
The space of such embeddings, equipped with the smooth topology, is denoted by $\CE(P,M)$. There is a stabilisation map \[\CE(P,M) \lra \CE(P \times J,M \times J)\] given by taking products with $J\coloneq [-1,1]$ followed by bending the result appropriately to make it satisfy the boundary condition (compare \cref{fig:stabilisation-intro}). In this work we establish a connectivity estimate for this map based on the disjunction results of \cite{Goodwillie}. To state it, recall that the \emph{handle dimension} of the inclusion $\partial M\cap P\subset P$  is the smallest number $p$ such that $P$ can be built from a closed collar on $\partial M\cap P$ by attaching handles of index at most $p$. 

\begin{bigthm}\label{thm:main}If the handle dimension $p$ of $\partial M\cap P\subset P$ satisfies $p\le d-3$, then the map
\[
	\CE(P,M) \lra \CE(P \times J,M \times J)
\]
is $(2d-p-5)$-connected.\end{bigthm}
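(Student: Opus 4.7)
The plan is to argue by induction on a handle decomposition of the pair $(P, \partial M \cap P)$ using handles all of index at most $p$. The base case is trivial: when $P$ is a closed collar on $\partial M\cap P$, the boundary condition in the definition of a concordance embedding forces both $\CE(P,M)$ and $\CE(P\times J, M\times J)$ to be contractible, so the stabilisation map is a weak equivalence.

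For the inductive step I would write $P = P_0 \cup_{\varphi} H^k$ where $H^k$ is a $k$-handle with $k\le p$; the handle dimension of $\partial M\cap P_0 \subset P_0$ is then also at most $p$. The restriction map $\CE(P,M)\to\CE(P_0,M)$ is a (homotopy) fibration, and the stabilisation map fits into a comparison with its $\times J$-stabilised analogue. The induction hypothesis gives that the map on the base is $(2d-p-5)$-connected (in fact typically more highly so), and the problem reduces to bounding the connectivity of stabilisation on the homotopy fiber over a basepoint $e_0\in\CE(P_0,M)$. I would identify such a fiber via parameterised isotopy extension with a space of concordance embeddings of $H^k$ into (a neighborhood of) the complement of $e_0(P_0\times I)$ in $M\times I$, with prescribed attaching data, and then deform the handle onto its core. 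This reduces the statement to the corresponding stabilisation question for concordance embeddings of a single $k$-disk.

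The essential case, and what I expect to be the main obstacle, is this disk case: proving $(2d-p-5)$-connectivity of stabilisation for $P = D^k$ with standard boundary pattern, $k\le p\le d-3$. Here I would invoke the multi-disjunction estimates of \cite{Goodwillie}. The heuristic is that stabilisation fails to be an equivalence precisely to the extent that a concordance embedding cannot be isotoped, through concordance embeddings, to be disjoint from a parallel copy of itself; the stabilisation obstruction then becomes a disjunction obstruction for a pair of concordance embeddings of $D^k$ in codimension $d-k\ge 3$, and the connectivity bound supplied by Goodwillie's disjunction theorem matches the target $2d-p-5$. The crux of the proof is therefore to translate stabilisation into a genuine disjunction problem for a pair of concordance embeddings and to extract the sharp connectivity estimate from Goodwillie's results; the surrounding handle induction is essentially a formal bookkeeping device.
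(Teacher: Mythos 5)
Your outer reduction is the same as the paper's: handle induction on $(P,\partial M\cap P)$, collar base case, isotopy extension to pass from $P$ to $P_0$ plus a handle, tubular neighbourhood retraction to replace the handle by its core $D^k$. That bookkeeping is correct. The gap is in the two steps you call the ``essential case'': (a) you do not realise that the absolute statement for $D^k$ \emph{cannot} be obtained from an absolute statement for lower-dimensional discs plus disjunction, and (b) your heuristic for the disc/point case does not actually lead to the number $2d-p-5$.

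Regarding (a): the paper's proof is necessarily multirelative (Theorem~\ref{thm:main-relative}). The delooping trick that trades $\CE(D^p,M)$ for $\Omega\CE(D^{p-1},M)$ does so by removing a tubular neighbourhood $T\cup D^p_{[-1,-\epsilon]}$ from $M$, and the failure of this scanning map to be an equivalence is governed by a \emph{relative} disjunction $r$-cube with one more $Q_i$ than you started with. So the induction from $D^p$ to $D^{p-1}$ increases the number of auxiliary submanifolds $Q_1,\dots,Q_r$; to prove the $r=0$ case for discs you must already possess the result for all $r\geq 0$ in lower handle dimension (this is exactly the double induction $(\mathbf{H}_{p,k})\Leftarrow(\mathbf{H}_{p,k-1})\wedge(\mathbf{H}_{p-1,k})$ in \S\ref{sec:proof-disc}). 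Your proposal, framed entirely in terms of the absolute stabilisation map, cannot close this induction.

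Regarding (b): the point case $P=\ast$ is not ``a disjunction problem for a pair of concordance embeddings.'' If one simply asks a concordance embedding of $P$ to be disjoint from a parallel copy, Theorem~\ref{thm:multipl-disjunction} with one extra $Q$ of the same handle dimension gives cartesianness roughly $2(d-p-2)$, which for $p>1$ falls short of $2d-p-5$; the degrees only match after delooping down to $p=0$, and there the naive disjunction estimate $2(d-2)=2d-4$ still does not obviously identify the obstruction to \emph{stabilisation}. The paper's argument for the point case (\S\ref{section:stability-point}) is genuinely different: one scans to rewrite stabilisation as a map into a loop space, passes to the homotopy fibre over the immersion space $\CI(\ast,M)$ using Smale--Hirsch to peel off the $(2d-2)$-connected Freudenthal contribution, cuts three holes in the source interval $I$ to resolve $\CbE(\ast,M)$ into the simpler spaces $\CE^{\{t_1,t_2\}}(\ast,M)\simeq S^{T_\ast M}\wedge\Omega(M)_+$, and finally recognises the remaining comparison as a Blakers--Massey square for configuration-type spaces obtained by removing arcs from $\interior(M\times J\times I)$. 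None of this is a formal consequence of Goodwillie's disjunction theorem applied to a pair of copies of $P$; disjunction (via Corollary~\ref{cor:disjunction-excision}) enters, but only as one ingredient in controlling the hole-cutting cube, and the sharp $2d-1+\Sigma$ at the end comes from Blakers--Massey, not from Morlet disjunction. Your sketch would need to be replaced by this, or by something of comparable depth, to produce the asserted connectivity.
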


\begin{rem}\label{EandCE}\ 
\begin{enumerate}
\item We prove \cref{thm:main} as the case $r=0$ of a stronger multirelative stability theorem about a map of $r$-cubes of spaces of concordance embeddings (see \cref{thm:main-relative}).
\item \label{EandCE:ii}The individual spaces in \cref{thm:main}, $\CE(P,M)$ and $\CE(P\times J,M\times J)$, are known to be $(d-p-3)$-connected: the case $P=D^p$ appears in \cite[Proposition 2.6, p.\,26]{BurgheleaLashofRothenberg} and the general case follows from an induction over a handle decomposition.
\item \label{EandCE:iii}The space $\CE(P,M)$ is closely related to the more commonly considered space $\E(P,M)$ of smooth embeddings $e\colon P\hookrightarrow M$ that agree with the inclusion on a neighbourhood of $P\cap\partial M$. Indeed, restriction to $P\times \{1\}$ induces a fibre sequence 
\[
	\E(P\times I, M\times I)\lra \CE(P,M)\lra\E(P,M)
\]
and thus a fibre sequence $\Omega\E(P,M)\ra \E(P\times I, M\times I)\ra \CE(P,M)$. From this point of view, the stabilisation map in \cref{thm:main} can be regarded as a second-order analogue of the map $\Omega \E(P,M)\to \E(P\times I,M\times I)$ that sends a $1$-parameter family of embeddings $P\hookrightarrow M$ indexed by $I$ to the single embedding $P\times I\hookrightarrow M\times I$.
\end{enumerate}
\end{rem}

\begin{figure}
	\begin{tikzpicture}[scale=2.5]
		\draw [Mahogany] (-1.5,0) -- (-1.5,1);
		\node at (-1.5,0) {$\cdot$};
		\node at (-1.5,1) {$\cdot$};
		
		\draw [|->] (-1.25,.5) -- (-.25,.5);
		\node at (-1.5,-.25) {$M$};
		\node at (-1.7,.5) {$I$};

		\begin{scope}[xshift=1cm]
			
 		\draw (-1,0) -- (1,0) -- (1,1) -- (-1,1) -- cycle;
		\foreach \i in {1,...,17}
		{
			\draw [Mahogany] (0,1) ++({-\i*10}:0) -- ++({-\i*10}:1);
		}	
		\draw (1,1) arc [start angle=0, end angle=-180, x 	radius=1cm,y radius =1cm];

		\node at (1.2,.5) {$I$};
		\node at (0,-.25) {$M \times J$};
		\end{scope}

	\end{tikzpicture}
	\caption{The stabilisation map.}
	\label{fig:stabilisation-intro}
\end{figure}
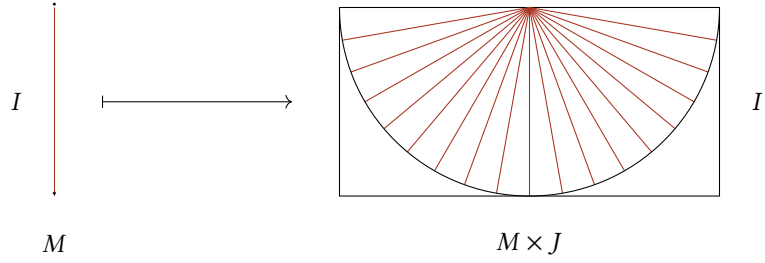

\begin{history}The statement of Theorem \ref{thm:main} is far from new, but a proof has never appeared. It was asserted in the Ph.D.\,thesis \cite[p.\,13]{GoodwillieThesis} of Goodwillie (who apologises for never having produced a proof, cannot recall exactly what kind of proof he had in mind at that time, and acknowledges that the correct number is $2n-p-5$, not $2n-p-4$ as claimed in \cite{GoodwillieThesis}), and then again in the thesis of Meng \cite[Theorem 0.0.1]{Meng}, which contains a proof in the case $P=*$ (see Theorem 3.4.1 loc.cit.). The general case is also referred to at  other places in the literature such as \cite[p.\,6]{IgusaStability} or \cite[p.\,210]{WWsurvey}.
\end{history}

\subsection*{Concordance diffeomorphisms}
One reason to be interested in spaces of concordance embeddings is that they provide information about maps between spaces of \emph{concordance diffeomorphisms} (also called \emph{concordances}, or \emph{pseudoisotopies}), that is, diffeomorphisms of $M\times I$ that agree with the identity on a neighbourhood of $M\times\{0\}\cup \partial M\times I$. The group of such, equipped with the smooth topology, is denoted by $\C(M)$. As for concordance embeddings, there is a stabilisation map $\C(M)\ra \C(M\times J)$. Building on ideas of Hatcher \cite{HatcherSimple}, Igusa proved that this map is approximately $\tfrac{d}{3}$-connected  \cite[p.\,6]{IgusaStability}. This is related to \cref{thm:main} as follows: for a submanifold $P\subset M$ as above, restriction from $M$ to $P$ yields a fibre sequence
\begin{equation}\label{equ:concordance-restriction}
	\C(M\backslash \nu(P))\lra \C(M)\lra \CE(P,M)
\end{equation}
by a variant of the parametrised isotopy extension theorem. Here $\nu(P)\subset M$ is an open tubular neighbourhood of $P$. \cref{thm:main} thus shows that if the handle dimension of the inclusion $\partial M\cap P\subset P$ is at most $d-3$ then the stability range for the base space of \eqref{equ:concordance-restriction} is significantly better than the available stability ranges for the total space and fibre. This can be used to transfer potential results about the homotopy fibre of the stabilisation map for concordance diffeomorphisms of a specific manifold to other manifolds. In \cref{sec:applications} we derive several corollaries from \cref{thm:main} in this direction. Here is an example:

\begin{bigcor}\label{cor:relative-concordance}Let $M$ be a compact $d$-manifold and $D^d\subset \interior(M)$ an embedded disc. If $M$ is $(k-1)$-connected and $k$-parallelisable for some $2 \leq k<\frac{d}{2}$, then the map  
\[
	\pi_i\big(\C(D^d\times J),\C(D^d) \big)\lra\pi_i\big(\C(M\times J),\C(M)\big)
\]
induced by extension by the identity is an isomorphism in degrees $i< d+k-4$.
\end{bigcor}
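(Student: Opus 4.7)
The plan is to apply \cref{thm:main} twice, once with $P = D^d$ and once with $P = M_0 := M \setminus \interior(D^d)$. Specialising the fibre sequence \eqref{equ:concordance-restriction} to $P = M_0$, and using that $M \setminus \nu(M_0)$ is a slightly shrunken copy of $D^d$, yields
\[
	\C(D^d) \lra \C(M) \lra \CE(M_0, M),
\]
with the analogous sequence holding for $M \times J$ in place of $M$. The stabilisation maps provide a map between these two fibre sequences, and its vertical homotopy fibres themselves assemble into a fibre sequence
\[
	A_{\C(D^d)} \lra A_{\C(M)} \lra A_{\CE(M_0, M)},
\]
where $A_{\C(X)}$ denotes the homotopy fibre of $\C(X) \to \C(X \times J)$ and analogously for $\CE$. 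Using the identification $\pi_i(\C(X \times J), \C(X)) \cong \pi_{i-1}(A_{\C(X)})$, the corollary reduces to showing that both $A_{\C(D^d)}$ and $A_{\C(M)}$ are $(d + k - 6)$-connected, since then the map between them is trivially an isomorphism on $\pi_j$ for $j \le d + k - 6$.

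The connectivity of $A_{\C(D^d)}$ is very strong: the fibre sequence with $M = D^d$ degenerates to an equivalence $\C(D^d) \simeq \CE(D^d, D^d)$ (as $\C(\emptyset) \simeq \ast$), and \cref{thm:main} with $P = D^d$ and $p = 0$ then shows that $\CE(D^d, D^d) \to \CE(D^d \times J, D^d \times J)$ is $(2d - 5)$-connected, so $A_{\C(D^d)}$ is $(2d - 6)$-connected, far beyond what is needed. For $A_{\C(M)}$, I apply \cref{thm:main} with $P = M_0$: it gives that $A_{\CE(M_0, M)}$ is $(2d - p - 6)$-connected, where $p$ is the handle dimension of $\partial M \cap M_0 \subset M_0$. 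Provided that $p \le d - k$, the long exact sequence of the displayed fibre sequence, combined with the much stronger connectivity of $A_{\C(D^d)}$, then forces $A_{\C(M)}$ to be $(d + k - 6)$-connected as well.

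The main obstacle is the handle dimension bound $p \le d - k$. For closed $M$, classical handle-trading arguments (in the style of Smale and Wall) exploit $(k-1)$-connectedness (together with simple connectedness, which holds since $k \ge 2$) and Poincaré--Lefschetz duality to produce a handle decomposition of $M$ with handles of index in $\{0\} \cup [k, d-k] \cup \{d\}$, so that removing the top $d$-handle leaves $M_0$ with handle dimension at most $d - k$. The hypothesis of $k$-parallelisability is used to extend this to the case of non-empty $\partial M$ (where one argues with a relative handle decomposition) and to treat the boundary case $k = 2$ at the edge of the range condition $p \le d - 3$ in \cref{thm:main}, by supplying the framings needed to realise the handle-trading geometrically via surgery on embedded middle-dimensional spheres. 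With the handle dimension estimate established, translating the connectivity of $A_{\C(M)}$ back into relative homotopy groups via the identification above completes the proof.
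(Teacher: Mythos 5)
The central error is in the treatment of $A_{\C(D^d)}$. You apply \cref{thm:main} to $\CE(D^d,D^d)\to \CE(D^d\times J,D^d\times J)$ with $p=0$, but for $P=D^d\subset M=D^d$ the relevant handle dimension is that of $\partial D^d\subset D^d$, which is $d$, not $0$: building $D^d$ from a collar on its boundary requires a $d$-handle. Since $d>d-3$, \cref{thm:main} simply does not apply to $P=M$. And indeed it cannot: $\C(D^d)=\CE(D^d,D^d)$ and $\C(D^d\times J)=\CE(D^d\times J,D^d\times J)$, and if the stabilisation map between them were $(2d-5)$-connected, the concordance stable range for discs would be roughly $2d$, far beyond Igusa's theorem. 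Concretely, \cref{cor:rational-stable-range} (via \cite{KRWorthogonal}) gives $\pi_{d-3}\big(\C(D^d\times J),\C(D^d)\big)\otimes\bfQ\cong\bfQ$, so $\pi_{d-4}(A_{\C(D^d)})\neq 0$; thus $A_{\C(D^d)}$ is not even $(d-4)$-connected, let alone $(2d-6)$-connected.

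This sinks the strategy at a structural level, not just in one step: you reduce the corollary to showing that both $A_{\C(D^d)}$ and $A_{\C(M)}$ are $(d+k-6)$-connected, so that the relevant relative homotopy groups vanish and the map is ``trivially'' an isomorphism. But these groups do not vanish (again, $i=d-3<d+k-4$ is in the stated range and carries a $\bfQ$ rationally). The corollary genuinely asserts an isomorphism between nontrivial groups. Even the salvageable part of your argument --- discarding the false claim about $A_{\C(D^d)}$ and using only the connectivity of $A_{\CE(M_0,M)}$ in the long exact sequence of $A_{\C(D^d)}\to A_{\C(M)}\to A_{\CE(M_0,M)}$ --- would give an isomorphism only in degrees $i<d+k-5$, one short of the claimed $i<d+k-4$.

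The paper's route is different: one first establishes the handle-invariance result (\cref{thm:handle-invariance}) --- that attaching a handle of index $\geq k\geq 3$ changes $\C(M\times J)/\C(M)$ by a $(d+k-5)$-connected map, proved using the fibre sequence \eqref{equ:concordance-restriction} with $P$ the cocore $D^{d-k}$ of the handle (which has handle dimension exactly $d-k$ relative to its boundary), and then \cref{thm:main}. This is then fed into the abstract recognition theorem \cite[Theorem 5.7]{KKDisc} for functors on the category of compact $d$-manifolds (giving \cref{thm:tangential}), and the extra degree is gained from the special case $M=D^d$ in that theorem. If you want a direct fibre-sequence argument along the lines you propose, the right comparison is not ``show both sides are highly connected'' but rather ``show the map on fibres $A_{\CE(M_0,M)}$ is highly connected'' --- which is what \cref{thm:main} actually provides --- and you must accept that this alone loses one degree relative to the paper's statement.
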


Here we call a manifold \emph{$k$-parallelisable}, if the restriction of the tangent bundle to a $k$-skeleton is trivialisable. For $k=1,2$ this is the same as being orientable or spin respectively.

\begin{nex}For $k=2$, \cref{cor:relative-concordance} specialises to an isomorphism $\pi_i\big(\C(D^d\times J),\C(D^d) \big)\cong\pi_i\big(\C(M\times J),\C(M)\big)$ for $i<d-2$ and any $1$-connected compact spin $d$-manifold $M$ with $d \geq 5$.
\end{nex}

\noindent Rationally, the relative homotopy groups $\pi_i(\C(D^d\times J),\C(D^d))$ of the stabilisation map for concordance diffeomorphisms of discs have been computed by Krannich and Randal-Williams \cite[Corollary B]{KRWorthogonal} in degrees up to approximately $\tfrac{3}{2}d$. Combined with \cref{cor:relative-concordance}, this gives the following:
 
\begin{bigcor}\label{cor:rational-stable-range}For a compact $(k-1)$-connected $k$-parallisable $d$-manifold $M$ with $2 \leq k <\frac{d}{2}$, there is a homomorphism
\[
	\pi_i\big(\C(M\times J),\C(M)\big)\otimes\bfQ\lra \begin{cases}\bfQ&\text{if }i=d-3,\\
	0&\text{otherwise,}
	\end{cases}
\]
which is an isomorphism in degrees $i<\min(d+k-4,\lfloor\tfrac{3}{2}d\rfloor-8)$ and an epimorphism in degrees $i< \min(d+k-4,\lfloor\tfrac{3}{2}d\rfloor-7)$.
\end{bigcor}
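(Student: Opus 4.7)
The plan is to combine \cref{cor:relative-concordance} with the rational computation of Krannich and Randal-Williams. Specialising the disc in \cref{cor:relative-concordance} to $D^d$ itself trivially satisfies the hypotheses, so the cited result \cite[Corollary B]{KRWorthogonal} provides the input on $D^d$, namely a homomorphism
\[
    \pi_i\big(\C(D^d\times J),\C(D^d)\big)\otimes\bfQ \lra \begin{cases}\bfQ & i=d-3,\\ 0 & \text{otherwise,}\end{cases}
\]
that is an isomorphism in degrees $i<\lfloor\tfrac{3}{2}d\rfloor-8$ and an epimorphism in degrees $i<\lfloor\tfrac{3}{2}d\rfloor-7$.

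Next, I would pick an embedded disc $D^d\subset\interior(M)$ and apply \cref{cor:relative-concordance}, which after tensoring with $\bfQ$ (exactness and flatness) yields an isomorphism
\[
    \pi_i\big(\C(D^d\times J),\C(D^d)\big)\otimes\bfQ \xra{\ \cong\ } \pi_i\big(\C(M\times J),\C(M)\big)\otimes\bfQ
\]
in degrees $i<d+k-4$. Transporting the map above along the inverse of this isomorphism produces the sought homomorphism on $\pi_i(\C(M\times J),\C(M))\otimes\bfQ$ in that range; in higher degrees the target is zero, so one can extend by zero to obtain a homomorphism defined in all degrees.

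In the intersection $i<\min(d+k-4,\lfloor\tfrac{3}{2}d\rfloor-8)$ both the extension-by-identity map and the KRW map are isomorphisms, so the composite is an isomorphism. In the slightly larger intersection $i<\min(d+k-4,\lfloor\tfrac{3}{2}d\rfloor-7)$ the extension-by-identity map is still an isomorphism while the KRW map is only guaranteed to be an epimorphism, giving the stated epimorphism range. There is no real obstacle: the argument is pure bookkeeping of the two stability ranges, the substantive content being encoded in \cref{cor:relative-concordance} and \cite[Corollary B]{KRWorthogonal}.
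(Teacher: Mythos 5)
Your proof is correct and is essentially the paper's own argument: the paper derives this corollary precisely by combining \cref{cor:relative-concordance} (to transfer from $D^d$ to $M$ in degrees below $d+k-4$) with the rational computation of $\pi_*\big(\C(D^d\times J),\C(D^d)\big)$ from \cite[Corollary B]{KRWorthogonal}, with the stated ranges coming from intersecting the two. The only point the paper additionally flags, in a remark rather than in the proof itself, is that the hypotheses force $M$ to be $1$-connected with $d\ge 5$, so $\C(M)$ and $\C(M\times J)$ are connected by Cerf, which is what makes the relative homotopy groups abelian and the rationalisation unambiguous.
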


\begin{rem}The assumptions in \cref{cor:rational-stable-range} in particular imply that $M$ is $1$-connected and $d\ge5$, so it follows from the main of result of \cite{Cerf} that $\C(M)$ and $\C(M\times J)$ are connected. This in particular implies that all $\pi_i(\C(M\times J),\C(M))$ are abelian groups, so the rationalisation appearing in the statement of the corollary is unambiguous.
\end{rem}

\noindent Denoting by
\[
	\phi(M)_\bfQ\coloneq \min\left\{s\in\bfZ\ \middle\vert\ \pi_i\big(\C(M\times J^{m+1}), \C(M\times J^m)\big)\otimes\bfQ=0\text{ for }i\le s\text{ and all }m\ge0\right\}
\]
the \emph{rational concordance stable range} of $M$ (the main limiting factor in the classical approach to the rational homotopy type of $\Diff(M)$ through surgery and pseudoisotopy theory, see e.g.\,\cite{WWsurvey}), \cref{cor:rational-stable-range} for $k=2$ in particular implies that $\phi(M^d)_\bfQ=d-4$ for all $1$-connected spin $d$-manifolds $M$ with $d>9$. For $1$-connected spin manifolds, our result confirms speculations of Igusa \cite[p.\,6]{IgusaStability} and Hatcher \cite[p.\,4]{HatcherSurvey} rationally, and improves the ranges of the many results in the literature that rely on the rational concordance stable range. It was known that $\phi(M^d)_\bfQ=d-4$ is the best possible potential result, since work of Watanabe \cite{Watanabe} implies the upper bound $\phi(D^d)_\bfQ\le d-4$ for many odd values of $d$. The previously best known lower bound was $\phi(M)_\bfQ \geq \min(\frac{1}{3}(d-4),\frac{1}{2}(d-7))$, due to Igusa \cite[p.\,6]{IgusaStability}, which is even a lower bound for the integral version of the concordance stable range. 

\subsection*{Acknowledgements}
We thank the anonymous referee for their comments. This material is partially based upon work supported by the Swedish Research Council under grant no.\,2016-06596 while MK and AK were in residence at Institut Mittag-Leffler in Djursholm, Sweden during the semester \emph{Higher algebraic structures in algebra, topology and geometry}. MK was partially funded by the Deutsche Forschungsgemeinschaft (German Research Foundation) under Germany's Excellence Strategy EXC 2044–390685587, Mathematics Münster: Dynamics–Geometry–Structure. AK acknowledges the support of the Natural Sciences and Engineering Research Council of Canada (NSERC) [funding reference number 512156 and 512250], as well as the Research Competitiveness Fund of the University of Toronto at Scarborough. AK is supported by an Alfred J.~Sloan Research Fellowship. 

\section{The multirelative stability theorem and some preliminaries}\label{section:multirelative}
\cref{thm:main} is proved as the case $r=0$ of a multirelative stability theorem that concerns certain $(r+1)$-cubical diagrams of spaces of concordance embeddings; see \cref{thm:main-relative}. The structure of the proof is such that the $r=0$ case requires the general case. In this section we state this multirelative version and establish various preliminaries.

\subsection{Cubical diagrams}\label{sec:cubes}
We begin with a review of cubical diagrams, following \cite{GoodwillieCalculusII}. An \emph{$r$-cube} for $r\ge0$ is a space-valued functor $X$ on the poset category $\cP(S)$ of subsets of a finite set $S$ of cardinality $r$, ordered by inclusion. To emphasise the particular finite set $S$, we sometimes also call $X$ an \emph{$S$-cube}. A $0$-cube is simply a space, a $1$-cube is a map between two spaces, a $2$-cube is a commutative square of spaces, and so on. Since $\varnothing$ is initial in $\cP(S)$ there is a map
\[
	X(\varnothing)\lra \underset{\varnothing\neq T\subseteq S}\holim\,X(T).
\]
The cube $X$ is called \emph{$k$-cartesian} if this map is $k$-connected. Here and throughout this section $k$ may be an integer or $\infty$. For instance, a $0$-cube $X$ is $k$-cartesian if the space $X(\varnothing)$ is $(k-1)$-connected, and a $1$-cube is $k$-cartesian if the map $X(\varnothing)\ra X(S)$ is $k$-connected. As usual, the convention is that a $k$-connected map is in particular surjective on path components if $k\ge 0$, and that a $k$-connected space is non-empty if $k\ge -1$. A \emph{map of $S$-cubes} $X\ra Y$  is a natural transformation. Such a map can also be considered as an $(S\sqcup \{\ast \})$-cube via
\[
	\cP(S\sqcup\{\ast \})\ni T\longmapsto\begin{cases}X(T)&\text{if }\ast \not{\in}T,\\ Y(T\backslash\{\ast \})&\text{otherwise}.
	\end{cases}
\] 
Conversely, an $S$-cube $X$ determines a map of $S\backslash\{\ast \}$-cubes for each $\ast \in S$, and the induced $S$-cube of each of these is isomorphic to $X$. A choice of basepoint $\ast \in X(\varnothing)$ induces compatible basepoints in $X(T)$ for all $T\in\cP(S)$. Given a map of $r$-cubes $X\ra Y$ and a point $\ast \in Y(\varnothing)$, we obtain an $r$-cube $\hofib_\ast (X\ra Y)$ by taking homotopy fibres.

Many standard facts about the connectivity behaviour of maps of spaces generalise to cubes of spaces. For example, from \cite[Propositions 1.6, 1.8, 1.18]{GoodwillieCalculusII} we have:

\begin{lem}\label{lem:cubelemma}For a map $ X\ra Y$ of $r$-cubes, considered as an $(r+1)$-cube, we have:
\begin{enumerate}[label=(\roman*)]
	\item\label{cubelemma:i} If $Y$ and $X\ra Y$ are $k$-cartesian, then $X$ is $k$-cartesian.
	\item\label{cubelemma:ii} If $X$ is $k$-cartesian and $Y$ is $(k+1)$-cartesian, then $ X\ra Y$ is $k$-cartesian.
	\item\label{cubelemma:iii} $X\ra Y$ is $k$-cartesian if and only if $\hofib_\ast (X\ra Y)$ is $k$-cartesian for all $\ast \in Y(\varnothing)$.
\end{enumerate}
Given a further map $Y\ra Z$ of $r$-cubes, considered as an $(r+1)$-cube, we have:
\begin{enumerate}[resume]
	\item\label{cubelemma:iv} If $X\ra Y$ and $Y\ra Z$ are $k$-cartesian, then $X\ra Z$ is $k$-cartesian.
	\item\label{cubelemma:v} If $X\ra Z$ is $k$-cartesian and $Y\ra Z$ is $(k+1)$-cartesian, then $ X\ra Y$ is $k$-cartesian.
\end{enumerate}
\end{lem}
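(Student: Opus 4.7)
The plan is to reduce everything to standard facts about $2$-cubes (commutative squares) by unpacking the homotopy limit in the definition of $k$-cartesianness. Given a map of $r$-cubes $X \to Y$ viewed as an $(r+1)$-cube $W$ on $\mathcal{P}(S \sqcup \{\ast\})$, split the poset of nonempty subsets of $S \sqcup \{\ast\}$ according to whether or not they contain $\ast$: those containing $\ast$ form a full subposet isomorphic to $\mathcal{P}(S)$ on which $W$ restricts to $Y$, contributing $Y(\varnothing)$ to the homotopy limit; those not containing $\ast$ contribute $\holim_{\varnothing \neq T \subseteq S} X(T)$. Assembling these pieces identifies $W$ being $k$-cartesian with $k$-cartesianness of the $2$-cube
\[
\begin{tikzcd}
X(\varnothing) \ar[r] \ar[d] & \holim_{\varnothing \neq T \subseteq S} X(T) \ar[d] \\
Y(\varnothing) \ar[r] & \holim_{\varnothing \neq T \subseteq S} Y(T).
\end{tikzcd}
\]

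With this reformulation in hand, parts \ref{cubelemma:i} and \ref{cubelemma:ii} reduce to the corresponding classical statements for squares: ``$X$ is $k$-cartesian'' translates to $k$-connectedness of the top map, ``$Y$ is $k$-cartesian'' to $k$-connectedness of the bottom map, and ``$X \to Y$ is $k$-cartesian'' to $k$-cartesianness of the square. Both implications then fall out of comparing vertical homotopy fibres via the long exact sequence, using the elementary facts that a $k$-connected map with $(k-1)$-connected target has $(k-1)$-connected source, and that any map from a $(k-1)$-connected space into a $k$-connected space is $k$-connected.

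For part \ref{cubelemma:iii}, take vertical homotopy fibres in the same square over a basepoint $\ast \in Y(\varnothing)$, using that $\hofib$ commutes with $\holim$ to identify the fibre of the right vertical map with $\holim_{\varnothing \neq T \subseteq S} \hofib_\ast(X \to Y)(T)$. The square is then $k$-cartesian precisely when the induced map between the vertical fibres is $k$-connected for every basepoint, which is exactly the assertion that the $r$-cube $\hofib_\ast(X \to Y)$ is $k$-cartesian.

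For parts \ref{cubelemma:iv} and \ref{cubelemma:v}, stack the squares associated to $X \to Y$, $Y \to Z$ and $X \to Z$ vertically: the outer rectangle is the $X \to Z$ square, while the $X \to Y$ and $Y \to Z$ squares sit on top and bottom. Their $k$-cartesiannesses can now be related via the Puppe-style fibre sequence
\[
	\hofib(X \to Y) \lra \hofib(X \to Z) \lra \hofib(Y \to Z)
\]
applied both at $\varnothing$ and at the $\holim$ level, which organises the three vertical-fibre comparison maps into a map of fibration sequences. Part \ref{cubelemma:iv} then amounts to saying that a $k$-connected map on fibres and on bases forces a $k$-connected map on total spaces, while \ref{cubelemma:v} says that a $k$-connected map on total spaces together with a $(k+1)$-connected map on bases forces a $k$-connected map on fibres. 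The main subtlety is running these arguments coherently across all choices of basepoints, but the $\pi_0$-surjectivity built into the $k$-connected hypotheses supplies the required lifts.
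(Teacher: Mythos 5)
The paper does not prove this lemma; it is cited verbatim from Propositions~1.6, 1.8 and 1.18 of Goodwillie's \emph{Calculus II}, and your sketch reproduces exactly the strategy used there: Proposition~1.6 is your square reformulation of $(r{+}1)$-cartesianness, after which parts \ref{cubelemma:i}--\ref{cubelemma:ii} are the elementary two-out-of-three connectivity facts for a square, part \ref{cubelemma:iii} is ``$\hofib$ commutes with $\holim$'' plus the $r=1$ case, and parts \ref{cubelemma:iv}--\ref{cubelemma:v} follow from the Puppe sequence on total fibres. So your approach agrees with the source the paper cites. The only caveat is that you assert rather than justify the two load-bearing facts --- the identification of $k$-cartesianness of the $(r{+}1)$-cube with $k$-cartesianness of the displayed square, and the equivalence ``square $k$-cartesian $\iff$ horizontal-fibre comparison $k$-connected over every basepoint'' --- and your closing remark that $\pi_0$-surjectivity is ``built into'' $k$-connectedness only applies when $k\ge 0$ (the lemma allows any integer $k$, though for $k<0$ the conditions are weak enough that no issue arises).
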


We will also encounter cubes of cubes. Just as a map of $r$-cubes may be considered as an $(r+1)$-cube, an $S'$-cube of $S$-cubes may be considered as an $(S\sqcup S')$-cube, using the canonical isomorphism $\cP(S\sqcup S')\cong \cP(S)\times \cP(S')$ of posets. 

\begin{lem}\label{lem:cube-of-cubes}Let $S$ and $S'$ be non-empty finite sets, and let $X$ be an $(S\sqcup S')$-cube. For $T'\subseteq S'$ write $X_{T'}$ for the $S$-cube given by $\cP(S) \ni T \mapsto X(T\sqcup T')$.
\begin{enumerate}
	\item\label{cube-of-cubes:i} If $X$ is $k$-cartesian and the $S$-cube $X_{T'}$ is $(k+|T'|-1)$-cartesian for all $T'$ such that $\varnothing\neq T' \subseteq S'$, then the $S$-cube $X_\varnothing$ is $k$-cartesian.
	\item\label{cube-of-cubes:ii}  If the $S$-cube $X_{T'}$ is $\infty$-cartesian for all $T'\subseteq S'$, then $X$ is $\infty$-cartesian.
\end{enumerate}
\end{lem}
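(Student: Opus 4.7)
The plan is to induct on $|S'|$, with each step a single invocation of \cref{lem:cubelemma}. I would pick $*\in S'$, set $S''\coloneq S'\setminus\{*\}$, and decompose $X$ as a map $X'\to X''$ of $(S\sqcup S'')$-cubes with $X'_{T''}=X_{T''}$ and $X''_{T''}=X_{T''\sqcup\{*\}}$ for $T''\subseteq S''$. For the base case $|S'|=1$ (so $S''=\varnothing$), $X$ is literally the map of $S$-cubes $X_\varnothing\to X_{\{*\}}$; for (i), \cref{lem:cubelemma}\,(i) applied with $Y=X_{\{*\}}$ (which is $k$-cartesian because $(k+|T'|-1)=k$ when $|T'|=1$) gives what we want, and for (ii), \cref{lem:cubelemma}\,(ii) at $k=\infty$ does.

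The engine driving the inductive step is an auxiliary statement $(\star)$ that I would establish first: \emph{for any $(S\sqcup S')$-cube $Y$ with each $Y_{T'}$ being $(k+|T'|)$-cartesian for every $T'\subseteq S'$ (no non-emptyness restriction), the cube $Y$ itself is $k$-cartesian.} This is proved by the same decomposition and a separate induction on $|S'|$: the sub-cubes $Y'$ and $Y''$ satisfy the hypothesis of $(\star)$ with constants $k$ and $k+1$ respectively, so inductively they are $k$- and $(k+1)$-cartesian, and the map $Y'\to Y''$ is then $k$-cartesian by \cref{lem:cubelemma}\,(ii). Part (ii) of the lemma falls out of $(\star)$ at $k=\infty$.

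For part (i) I would apply $(\star)$ to the sub-cube $X''$: its constituents $X''_{T''}=X_{T''\sqcup\{*\}}$ are $(k+|T''\sqcup\{*\}|-1)=(k+|T''|)$-cartesian by the hypothesis of (i), so $X''$ is $k$-cartesian. Since $X$ itself (the map $X'\to X''$) is $k$-cartesian by assumption, \cref{lem:cubelemma}\,(i) then forces $X'$ to be $k$-cartesian. But $X'_{T''}=X_{T''}$, so $X'$ is an $(S\sqcup S'')$-cube satisfying the hypotheses of (i) for the smaller index set $S''$, and the inductive hypothesis of (i) applied to $X'$ yields that $X'_\varnothing=X_\varnothing$ is $k$-cartesian, as required.

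The main obstacle is recognising $(\star)$: the hypotheses of (i) give no direct handle on the sub-cube $X''$, so without first converting the information about its constituent $S$-cubes into $k$-cartesianness of $X''$ itself, one cannot engage \cref{lem:cubelemma}\,(i) to pull cartesianness across the decomposition $X=(X'\to X'')$. Once $(\star)$ is in place, both parts drop out at once.
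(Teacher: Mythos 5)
Your proof is correct. The paper disposes of part (i) by citing \cite[Proposition 1.20]{GoodwillieCalculusII} and only argues part (ii) from scratch, and the paper's argument for (ii) — decompose the cube as a map of sub-cubes, induct on the number of ``extra'' directions, apply \cref{lem:cubelemma}\,\ref{cubelemma:ii} at $\infty$ — is exactly your $(\star)$ specialised to $k=\infty$. What your route adds is a self-contained proof of (i) from the same machinery: recognising $(\star)$ at finite $k$, observing that the nonemptiness restriction in (i) is precisely what makes the constituent $S$-cubes of the face $X''$ satisfy the hypotheses of $(\star)$ (the arithmetic $|T''\sqcup\{*\}|-1=|T''|$ works out), peeling that direction off with \cref{lem:cubelemma}\,\ref{cubelemma:i}, and recursing. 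I checked the bookkeeping in the base cases, the inductive step for $(\star)$, and the descent from $X$ to $X'$ in part (i); all are sound. This unifies the two halves of the lemma under one inductive mechanism, at the cost of a slightly longer argument than citing Goodwillie directly for (i).
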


\begin{proof}
Part \ref{cube-of-cubes:i} is \cite[Proposition 1.20]{GoodwillieCalculusII}. For part \ref{cube-of-cubes:ii}, note that according to \cref{lem:cubelemma} \ref{cubelemma:ii} a map of $\infty$-cartesian $r$-cubes is always an $\infty$-cartesian $(1+r)$-cube. To prove the more general assertion that an $s$-cube of $\infty$-cartesian $r$-cubes is always an $\infty$-cartesian $(s+r)$-cube, we induct on $s$. An $(s+1)$-cube of $\infty$-cartesian $r$-cubes is a map of $s$-cubes of $\infty$-cartesian $r$-cubes, therefore by the inductive hypothesis is a map of $\infty$-cartesian $(s+r)$-cubes, and hence is indeed $\infty$-cartesian.
\end{proof}

\begin{cor}\label{cor:cube-of-cubes}
Let $S$ be a finite set.
\begin{enumerate}
	\item \label{enum:constant-cubes}The constant $S$-cube with value a fixed space $X$ is $\infty$-cartesian as long as $|S|\ge1$.
	\item \label{enum:product-cube} Given spaces $X_s$ for $s\in S$, the $S$-cube
		\[\textstyle{\cP(S)\ni T\mapsto \bigsqcap_{s\in S\backslash T}X_s}\]
	defined by the projections  is $\infty$-cartesian as long as $|S|\ge2$.
	\item \label{enum:pointed-product cube} Given pointed spaces $Y_s$ for $s \in S$, the $S$-cube
		\[\textstyle{\cP(S) \ni T \mapsto \bigsqcap_{s \in T} Y_s}\]
	defined by the inclusions is $\infty$-cartesian as long as $|S| \geq 2$.
\end{enumerate}
\end{cor}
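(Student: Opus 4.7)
The plan is to prove all three parts by induction on $|S|$, in each inductive step splitting the $S$-cube as a $\{s_0\}$-cube of $(S\setminus\{s_0\})$-cubes for some chosen $s_0\in S$ and applying \cref{lem:cube-of-cubes} \ref{cube-of-cubes:ii}. For part \ref{enum:constant-cubes} the base case $|S|=1$ is the identity map $X\to X$, which is trivially $\infty$-cartesian; the inductive step is immediate since both sub-cubes of a constant $S$-cube are again constant $(S\setminus\{s_0\})$-cubes with value $X$, hence $\infty$-cartesian by the inductive hypothesis as long as $|S\setminus\{s_0\}|\geq 1$.

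For part \ref{enum:product-cube} the base case $|S|=2$ amounts to the observation that $X_a\times X_b$ is the homotopy pullback of the cospan $X_a\to \ast \leftarrow X_b$, which holds because $X_a\to\ast$ is a fibration and the square is already a strict pullback. For $|S|\geq 3$, splitting off some $s_0\in S$ yields two sub-cubes: $X_{\{s_0\}}$ is precisely the projection cube for $S\setminus\{s_0\}$ and the spaces $\{X_s\}_{s\neq s_0}$, hence $\infty$-cartesian by induction; and $X_\varnothing$ is canonically isomorphic to $X_{\{s_0\}}\times X_{s_0}$, with $X_{s_0}$ viewed as the constant $(S\setminus\{s_0\})$-cube, and is therefore also $\infty$-cartesian since multiplying an $\infty$-cartesian cube by a constant preserves $\infty$-cartesianness (homotopy limits commute with products by a fixed space). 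Part \ref{enum:pointed-product cube} is entirely analogous: the base case $|S|=2$ identifies the homotopy pullback of $Y_a\to Y_a\times Y_b\leftarrow Y_b$ (both inclusions via basepoints) with a product of two based path spaces, hence contractible; and in the inductive step, splitting off $s_0\in S$ exhibits $X_\varnothing$ as the inclusion cube for $S\setminus\{s_0\}$ and $X_{\{s_0\}}$ as $Y_{s_0}\times X_\varnothing$.

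There is no genuine obstacle: the result is essentially a formal consequence of \cref{lem:cube-of-cubes} \ref{cube-of-cubes:ii} combined with the two $|S|=2$ base-case pullback computations. The only point requiring care is the correct identification of the two sub-cubes in the inductive step as cubes of the same form treated by the corollary (possibly tensored with a constant factor), so that the inductive hypothesis applies cleanly.
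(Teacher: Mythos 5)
Your proof is correct. Part \ref{enum:constant-cubes} is handled exactly as in the paper (base case $|S|=1$ plus \cref{lem:cube-of-cubes}~\ref{cube-of-cubes:ii}), but your treatment of parts \ref{enum:product-cube} and \ref{enum:pointed-product cube} takes a genuinely different route.

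The paper deduces parts \ref{enum:product-cube} and \ref{enum:pointed-product cube} from part \ref{enum:constant-cubes} via \cref{lem:cubelemma}~\ref{cubelemma:iii}: for \ref{enum:product-cube} it views the $S$-cube as a map of $(S\setminus\{\ast\})$-cubes given by projection away from $X_\ast$, observes that all cubes of homotopy fibres are constant with value $X_\ast$, and applies \ref{enum:constant-cubes}; for \ref{enum:pointed-product cube} it maps the constant cube with value $\bigsqcap_{s\in S}Y_s$ to the projection cube from \ref{enum:product-cube}, notes both are $\infty$-cartesian, and identifies the cube of fibres over the canonical basepoint with the inclusion cube. You instead run a direct induction on $|S|$ through \cref{lem:cube-of-cubes}~\ref{cube-of-cubes:ii} for all three parts. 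This works, but it costs you two explicit $|S|=2$ base-case pullback computations and the auxiliary fact that taking a product with a fixed space preserves $\infty$-cartesianness of a cube (which itself quietly uses \ref{enum:constant-cubes}, since one needs $\holim_{\varnothing\neq T\subseteq S'}Y\simeq Y$ for the unpunctured factor). The paper's route through the homotopy-fibre criterion is more economical: it makes \ref{enum:product-cube} and \ref{enum:pointed-product cube} formal consequences of \ref{enum:constant-cubes} with no new computations. Your approach buys uniformity (the same inductive template for all three parts) at the cost of the extra lemma about products; if you keep it, you should state and justify that lemma explicitly rather than leave it as a parenthetical.
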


\begin{proof}
When $|S|=1$, part \ref{enum:constant-cubes} is simply the assertion that the identity map $X\ra X$ is $\infty$-connected. The general case of part \ref{enum:constant-cubes} then follows using \cref{lem:cube-of-cubes} \ref{cube-of-cubes:ii}. For part \ref{enum:product-cube}, we pick $\ast \in S$ and view the $S$-cube in question as the map of $(S\backslash \{\ast \})$-cubes $\bigsqcap_{s\in S\backslash \bullet}\,X_s \ra \bigsqcap_{s\in S\backslash (\bullet\cup\{\ast \})}\,X_s$. By \cref{lem:cubelemma} \ref{cubelemma:iii}, it suffices to show that the cubes of homotopy fibres at all basepoints are $\infty$-cartesian. These are constant cubes, so the claim follows from \ref{enum:constant-cubes}. For part \ref{enum:pointed-product cube}, one considers the map of $S$-cubes from the constant cube with value $\bigsqcap_{s \in S} Y_s$ to the cube $\cP(S) \ni T \mapsto \bigsqcap_{s \in S \setminus T} Y_s$ given by the canonical projection maps. By parts \ref{enum:constant-cubes} and \ref{enum:product-cube} both are $\infty$-cartesian, and by \cref{lem:cubelemma} \ref{cubelemma:iii} the same holds for the cube of homotopy fibres over the basepoint provided by the basepoints in the $Y_s$'s. This is the cube in question.
\end{proof}

In a key step of the proof of our main result, we will make use of the following multirelative generalisation of the Blakers--Massey theorem for strongly cocartesian cubes. An $S$-cube $X$ is said to be \emph{strongly cocartesian} if for every subset $T\subseteq S$ and distinct elements $s_1\neq s_2\in S\backslash T$, the following square is homotopy cocartesian
\[
	\begin{tikzcd}
	X(T)\rar \dar&X(T\cup\{s_1\})\dar\\
	X(T\cup \{s_2\})\rar &X(T\cup\{s_1,s_2\}).
	\end{tikzcd}
\]
In terms of this definition, \cite[Theorem\,2.3]{GoodwillieCalculusII} says:

\begin{thm}\label{thm:blakers-massey}
Let $S$ be a non-empty finite set. If $X$ is a strongly cocartesian $S$-cube such that the map $X(\varnothing)\ra X(\{s\})$ is $k_s$-connected for all $s\in S$, then $X$ is $(1-|S|+\sum_{s\in S}k_s)$-cartesian.
\end{thm}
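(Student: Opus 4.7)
The plan is to prove \cref{thm:blakers-massey} by induction on the cardinality $n = |S|$. The case $n=1$ is immediate from the definitions, and the case $n=2$ is the classical Blakers--Massey theorem for homotopy cocartesian squares: if the two initial maps of a homotopy pushout square are $k_1$- and $k_2$-connected, then the square is $(k_1+k_2-1)$-cartesian. We take this as the base of the induction.

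For the inductive step with $n \ge 3$, I would distinguish an element $\ast\in S$, set $S' = S\setminus\{\ast\}$, and regard $X$ as a map of $S'$-cubes $X^0 \to X^1$, where $X^0(T) = X(T)$ and $X^1(T) = X(T\sqcup\{\ast\})$ for $T\subseteq S'$. Both $X^0$ and $X^1$ are themselves strongly cocartesian $(n-1)$-cubes, since each of their $2$-faces is a $2$-face of $X$. The initial map $X^0(\varnothing) \to X^0(\{s\})$ has connectivity $k_s$ by assumption, and the corresponding map of $X^1$, namely $X(\{\ast\}) \to X(\{s,\ast\})$, has connectivity at least $k_s$ because it is the cobase change of the $k_s$-connected map $X(\varnothing) \to X(\{s\})$ along $X(\varnothing) \to X(\{\ast\})$ in the strongly cocartesian $\{s,\ast\}$-face. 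The inductive hypothesis therefore yields that both $X^0$ and $X^1$ are $c$-cartesian, where $c = 2 - n + \sum_{s\in S'} k_s$.

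The heart of the proof, and the expected principal obstacle, is to upgrade this to the sharper statement that the full $n$-cube $X$ is $(c + k_\ast - 1)$-cartesian, since a direct application of \cref{lem:cubelemma} \ref{cubelemma:ii} only delivers $(c-1)$-cartesianness and loses the entire $k_\ast$-contribution. My approach would be to pass to the $S'$-cube $F$ of homotopy fibres of the map $X^0 \to X^1$, so that by \cref{lem:cubelemma} \ref{cubelemma:iii} it suffices to prove $F$ is $(c + k_\ast - 1)$-cartesian. For each $T\subseteq S'$ the strongly cocartesian hypothesis makes the square with corners $X(\varnothing)$, $X(\{\ast\})$, $X(T)$, $X(T\sqcup\{\ast\})$ into a homotopy pushout, and iterating the two-dimensional Blakers--Massey bound along the $1$-skeleton of $\mathcal{P}(S')$ should show that the natural map $F(\varnothing) \to F(T)$ is highly connected; this is what is expected to recover the missing $k_\ast$.

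To manage the bookkeeping, I would reorganise the argument by selecting a second distinguished element $\ast' \in S'$ and decomposing $X$ as a $2$-cube of $(n-2)$-cubes, whose connectivity behaviour can be controlled by combining the inductive hypothesis on each of the four corner cubes with the classical Blakers--Massey theorem applied to the initial $\{\ast,\ast'\}$-face. The key arithmetic identity to keep in mind is
\[
1 - n + \sum_{s\in S} k_s \;=\; \bigl(3 - n + \sum_{s\in S\setminus\{\ast,\ast'\}} k_s\bigr) + (k_\ast + k_{\ast'} - 1) - 1,
\]
which compactly expresses how the inductively obtained estimate on the $(n-2)$-cube is combined with the two-dimensional Blakers--Massey bound on the distinguished square. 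Tracking these combined estimates correctly, using \cref{lem:cubelemma} and \cref{lem:cube-of-cubes}, is the most delicate part of the argument and is where I expect the bulk of the work to lie.
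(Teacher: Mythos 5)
The paper does not prove \cref{thm:blakers-massey} itself; it is quoted from \cite[Theorem 2.3]{GoodwillieCalculusII}, where the proof rests on a genuine geometric (general-position) argument, not on a formal reduction to the classical $n=2$ case. So there is no in-paper proof to compare against — but your proposal can be assessed on its own terms, and it has a real gap, which you have in fact correctly located but do not overcome.

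The decisive difficulty is that the $S'$-cube $F=\hofib(X^0\to X^1)$ is \emph{not} strongly cocartesian: a $2$-face of $F$ arises from a strongly cocartesian $3$-face of $X$ by taking fibres in the $\ast$-direction, and passage to fibres does not preserve cocartesianness. Hence the inductive hypothesis cannot be invoked for $F$. What does survive, as you observe, are connectivity estimates on individual maps of $F$ — from the $n=2$ theorem applied to the various $2$-faces of $X$ containing $\ast$ — but there is no formal cube lemma, nothing in \cref{lem:cubelemma} or \cref{lem:cube-of-cubes}, that turns edge connectivities of a cube with no cocartesian structure into a bound on how cartesian the cube is. The ``second distinguished element'' variant runs into the same wall: knowing that each of the four corner $(n-2)$-cubes is suitably cartesian, and that the distinguished $\{\ast,\ast'\}$-face is $(k_\ast+k_{\ast'}-1)$-cartesian, does not combine to bound the cartesianness of $X$ — \cref{lem:cube-of-cubes}~\ref{cube-of-cubes:i} goes in the opposite direction and has no converse of the kind you would need, and your arithmetic identity, while necessary, is nowhere near sufficient. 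What actually closes the induction in \cite{GoodwillieCalculusII} is a quantitatively refined version for cubes that are only approximately cocartesian (Theorem 2.5 of loc.\,cit., which this paper invokes for a different purpose at the end of \cref{sec:calculus}), and that refinement is itself proved by the same geometric machinery as the theorem — so a purely formal bootstrap from the classical $2$-dimensional Blakers--Massey theorem is not available.
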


\subsection{The stabilisation map}\label{sec:stabilisation-map}  
We now give a precise definition of the stabilisation map for concordance embeddings. As in the introduction, we fix a smooth manifold $M$ and a compact submanifold $P\subset M$ that meets $\partial M$ transversely. To construct the stabilisation map, we replace $\CE(P,N)$ by the equivalent subspace  
\[
	\CE'(P,N) \subset \CE(P,N)
\]
consisting of those concordance embeddings $e\colon P\times [0,1]\hookrightarrow M\times [0,1]$ such that $e(p,t) = ((\pr_M \circ e)(p,1),t)$ on a neighbourhood of $P \times \{1\}$. Writing $I\coloneq[0,1]$ and $J \coloneq [-1,1]$, we decompose the rectangle $J \times I$ into the two closed subspaces (see \cref{fig:stabilisation})
 \[
 	D_1\coloneqq \{(x,y) \in J \times I \mid x^2+(y-1)^2 \leq 1\} \quad\text{and}\quad
	D_2 \coloneqq \{(x,y)\in J \times I \mid x^2+(y-1)^2 \geq 1\}.
\]
The first of these can be parametrised by polar coordinates via
\begin{align}\label{equ:polar-coordinates}
	\begin{split}\Lambda\colon [0,1] \times [0,\pi]&\longrightarrow D_1\\
	(r,\theta)&\longmapsto \big((1-r)\cos(\theta+\pi),(1-r)\sin(\theta+\pi)+1\big).\end{split}
\end{align}
Writing $e_M \coloneqq (\pr_M \circ e) \colon P \times I \ra M$ and $e_I \coloneqq (\pr_I \circ e) \colon P \times I \ra I$ for a map $e \colon P \times I \to M \times I$ (such as a concordance embedding), the stabilisation map 
\[
	\sigma \colon \CE'(P,M) \lra \CE(P \times J,M \times J)
\]
is defined by sending $e \in \CE'(P,M)$ to the concordance embedding
\begin{equation}\label{equ:stab-map-formula} 
	\begin{aligned}
	\sigma(e)\colon P \times J \times I&\longrightarrow M \times J \times I\\\big(p,s,t\big)&\longmapsto\begin{cases}
	\big(e_M(p,r),\Lambda(e_I(p,r),\theta)\big) & \text{if $(s,t) = \Lambda(r,\theta) \in D_1$,} \\
	\big(p,s,t\big) & \text{if $(s,t) \in D_2$.}\end{cases}
	\end{aligned}
\end{equation}
The point of passing to the subspace $\CE'(P,N)$ is to ensure that $\sigma(e)$ is smooth at $(p,0,1)$.
		
\begin{convention}\label{conv:no-subspace}In what follows we do not distinguish between $\CE(P,M)$ and its homotopy equivalent subspace $\CE'(P,M) \subset \CE(P,M)$. In particular, we write $\CE(P,M)$ for the domain of the stabilisation map, even though it should strictly speaking be $\CE'(P,M)$.

\end{convention}

\begin{figure}
	\begin{tikzpicture}[scale=2.5]
		\draw (-1,0) -- (1,0) -- (1,1) -- (-1,1) -- cycle;
		\foreach \i in {1,...,17}
		{
			\draw [Mahogany] (0,1) ++({-\i*10}:0) -- ++({-\i*10}:1);
		}
		\draw (1,1) arc [start angle=0, end angle=-180, x radius=1cm,y radius =1cm];
		
		\node [fill=white] at (0,.5) {$D_1$};
		\node at (-.75,.16) {$D_2$};
		\node at (.75,.16) {$D_2$};
		
		\node at (1.2,.5) {$I$};
		\node at (0,-.25) {$J$};
	\end{tikzpicture}
	\caption{The decomposition $J \times I = D_1 \cup D_2$. The red arcs indicate the parametrisation \eqref{equ:polar-coordinates}: the semicircle is parametrised by fixing $r=0$ and taking $\theta\in [0,\pi]$ starting with $\theta = 0$ on the left, and the radial segments are parametrised by fixing $\theta \in [0,\pi]$ and taking $r\in [0,1]$ starting with $r=0$ at the semicircle. The map $\sigma(e)$ is given by the identity on $D_2$, and by $e$ on each radial segment in $D_1$.}
	\label{fig:stabilisation}
\end{figure}
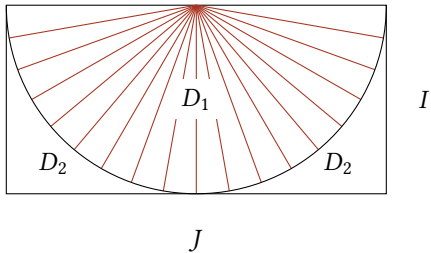

\subsection{Statement of the main theorem}\label{sec:stm-main-thm}
To state the main result, we fix a smooth $d$-manifold $M$ with compact disjoint submanifolds $P, Q_1,\ldots,Q_r\subset M$ for $r\ge0$, all transverse to $\partial M$. We abbreviate $\{1,\ldots,r\}$ by $\ul{r}$ and write $M_S \coloneqq M \backslash  \cup_{i\not\in S}Q_i$ for subsets $S\subseteq \ul{r}$; for example, $M_\varnothing = M\backslash{\cup_{i =1}^r Q_i}$ and $M_{\ul{r}} = M$. Postcomposition with the inclusions $M_S\subset M_{S'}$ for $S\subset S'$ induces inclusions $\CE(P,M_S)\subset \CE(P,M_{S'})$ that assemble to an $r$-cube $\CE(P,M_\bullet)$. As the construction of the stabilisation map from \cref{sec:stabilisation-map} is natural in inclusions of submanifolds $M\subset M'$ with $P\cap\partial M=P\cap \partial M'$, it extends to a map of $r$-cubes $\CE(P,M_\bullet)\ra \CE(P\times J,M_\bullet \times J)$ which we view as an 
$(r+1)$-cube 
\[
	\sCE(P,M_\bullet)\coloneq\big(\CE(P,M_\bullet)\ra \CE(P\times J,M_\bullet \times J)\big).
\] 
The main result of this work concerns this $(r+1)$-cube. It includes \cref{thm:main} as the case $r=0$. The statement involves the dimension $d$ of $M$ and the handle dimensions $p, q_i$ of the inclusions $\partial M\cap P\subset P$ and $\partial M \cap Q_i\subset Q_i$. The numbers $q_i$ will play a role via the quantity
\[
	\textstyle{\Sigma\coloneq \sum_{i=1}^r(d-q_i-2)},
\]
which we abbreviate as indicated since it will be ubiquitous in everything that follows.

\begin{thm}\label{thm:main-relative}
If $d-p\ge3$ and $d-q_i\ge3$ for all $i$, then the $(r+1)$-cube 
\[
	\sCE(P,M_\bullet)=\big(\CE(P,M_\bullet)\ra \CE(P\times J,M_\bullet\times J)\big)
\] 
is $(2d-p-5+\Sigma)$-cartesian. \end{thm}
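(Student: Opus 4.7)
The plan is to induct on the number of handles in a decomposition of $(P, \partial M \cap P)$. The base case is when $P$ is a collar on $\partial M \cap P$ (no handles): here every vertex of $\sCE(P, M_\bullet)$ is contractible, so the cube is trivially $\infty$-cartesian. This already reflects the observation that the $r = 0$ case requires the multirelative statement, since the handle induction will introduce new obstacle submanifolds at each step.

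For the inductive step I would write $P = P' \cup H$ with $H$ a handle of maximal index $p' \le p$ attached to $P'$, and promote the $(r+1)$-cube $\sCE(P, M_\bullet)$ to an $(r+2)$-cube by inserting a new cubical direction associated to $H$. One vertex of this new direction replaces $P$ by $P'$ while treating a suitable disc associated to $H$ (for instance, a pushoff of its core) as an additional distinguished submanifold $Q_{r+1}$ of handle dimension $p'$. Applying the inductive hypothesis to this enlarged data --- with $P$ replaced by $P'$ and one more obstacle $Q_{r+1}$ contributing $(d - p' - 2)$ to $\Sigma$ --- should yield cartesianness $2d - p - 5 + \Sigma + (d - p' - 2)$ on the $(r+2)$-cube, while the subcube corresponding to $T' = \{r+1\}$ models $\sCE(P', M_\bullet)$ and is $(2d - p - 5 + \Sigma)$-cartesian by the inductive hypothesis applied to $P'$. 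An application of \cref{lem:cube-of-cubes}\ref{cube-of-cubes:i} then extracts the required $(2d - p - 5 + \Sigma)$-cartesianness of $\sCE(P, M_\bullet) = X_\varnothing$.

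This procedure reduces everything to the initial case of a single handle $P = H$, which must be addressed by direct input from the disjunction results of \cite{Goodwillie}. Through the fibre sequence from \cref{EandCE}, concordance embeddings of a disc are related to loops of ordinary embeddings, and Goodwillie's multiple-disjunction theorem controls cubes of embedding spaces of a disc into a manifold with several distinguished submanifolds removed; the $2d - p - 5$ estimate (as opposed to the $d - p - 3$ connectivity of the individual spaces) reflects that stabilisation by $J$ is a second-order comparison, combining two independent disjunction estimates in the spirit of Blakers--Massey. I expect the main obstacle to lie in this base case, where one must package the stabilisation map within a genuine multirelative framework so that Goodwillie's theorem delivers the correct $\Sigma$-dependent connectivity, along with the careful bookkeeping in the inductive step to ensure that the newly introduced $Q_{r+1}$ genuinely encodes the handle $H$ with all required disjointness and transversality conditions preserved.
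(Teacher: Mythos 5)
Your outline correctly identifies the overall plan — a handle induction reducing general $P$ to discs, with the multirelative disjunction results and a Blakers--Massey--type argument doing the work in the base case — and this matches the paper's strategy in broad terms. However, there are two serious gaps.

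First, the inductive step as written does not parse. You want an $(r+2)$-cube $X$ with $X_\varnothing = \sCE(P,M_\bullet)$ and $X_{\{r+1\}} = \sCE(P',M_\bullet)$, and you propose to bound $X$ by applying the inductive hypothesis to $P'$ together with an extra obstacle $Q_{r+1}$ (a pushoff of the core of $H$). But the $(r+2)$-cube supplied by that inductive hypothesis is $\sCE(P',M_{\bullet\cup\{r+1\}})$, all of whose vertices have source $P'$; its $T'=\varnothing$ face is $\sCE(P',(M\setminus Q_{r+1})_\bullet)$, not $\sCE(P,M_\bullet)$. There is no identification connecting these two cubes, so \cref{lem:cube-of-cubes}\ref{cube-of-cubes:i} cannot be applied as you describe. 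The paper instead views the restriction map $\sCE(P,M_\bullet)\to\sCE(P',M_\bullet)$ as an $(r+2)$-cube, bounds the base by the inductive hypothesis for $P'$, and uses \cref{lem:cubelemma}\ref{cubelemma:i},\ref{cubelemma:iii} together with parametrised isotopy extension to identify the cube of homotopy fibres with $\sCE(H,M'_\bullet)$ (for $M'$ the complement of a tubular neighbourhood of $P'$), which \cref{lem:disc-bundle} then reduces to $\sCE(D^i,M'_\bullet)$. The "extra $Q_{r+1}$" mechanism you describe is closer in spirit to what happens inside the \emph{disc} induction (via the scanning map, see the identification \eqref{eqref:inclusion-delooping}), not the handle induction.

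Second, and more importantly, the disc case — to which the handle induction reduces everything — is precisely the content of the theorem and you leave it as a heuristic. In the paper this occupies \cref{section:stability-point} and \cref{sec:proof-disc}: a double induction via the scanning/delooping map reduces $P=D^p$ to the case $P=\ast$, and the point case requires the scanning map $\tau$, passage to the homotopy fibre $\CbE$ over concordance immersions, the auxiliary spaces $\CE^A(\ast,M)$ and $\CE^{\{t_1,t_2\}}(\ast,M)$, two geometric homotopies, and finally an application of the multirelative Blakers--Massey theorem (\cref{thm:blakers-massey}) to a cube built from complements of arcs in $\interior(M\times J\times I)$. Your observation that stabilisation is "a second-order comparison combining two disjunction estimates" is a reasonable intuition, but nothing in your proposal packages $\sigma$ in a form to which disjunction or Blakers--Massey can actually be applied, so the induction has nothing to bottom out on.
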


The proof occupies \cref{sec:proof}. The remainder of this section contains more preliminaries.

\subsection{Collars and tubular neighborhoods}\label{sec:disc-bundles-dont-matter} 
The following two lemmas describe the homotopy type of $\CE(P,M)$ for some simple choices of $P$.

\begin{lem}\label{lem:collar}
If $P$ is a closed collar on $P\cap \partial M\subset P$, then $\CE(P,M)$ is contractible.
\end{lem}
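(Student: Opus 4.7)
The plan is to exhibit an explicit strong deformation retraction of $\CE(P,M)$ to the inclusion $\iota = \inc\colon P \times I \hookrightarrow M\times I$. Write $P = C\times[0,1]$ with $C = P\cap \partial M$ identified with $C \times \{0\}$. The core observation is that every $e \in \CE(P,M)$ equals $\iota$ on some (variable) neighbourhood of the $L$-shape $L \coloneqq P \times \{0\} \cup C \times I \subset P \times I$. The strategy is to compress $P\times I$ into such a neighbourhood, exploit that $e = \iota$ there, and then expand back out through $\iota$.

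Concretely, I would construct a smooth two-parameter family of self-embeddings $\Phi_\tau^\delta \colon P\times I \to P\times I$, for $\tau\in[0,1]$ and $\delta\in(0,1]$, preserving the $I$-coordinate, such that
\begin{enumerate}[label=(\roman*)]
\item $\Phi_\tau^\delta$ equals the identity on a neighbourhood of $L$, so that post-composition $e \mapsto e\circ\Phi_\tau^\delta$ preserves the boundary conditions defining $\CE(P,M)$;
\item $\Phi_1^\delta = \id$;
\item the image of $\Phi_0^\delta$ is contained in the $\delta$-neighbourhood of $L$ in $P\times I$.
\end{enumerate}
An explicit choice is $\Phi_\tau^\delta(c,u,t) = \bigl(c,\,\alpha_\tau^\delta(u,t),\,t\bigr)$ with
\[
\alpha_\tau^\delta(u,t) = \bigl(1-\rho_\delta(t)(1-\tau)\bigr)u + \rho_\delta(t)(1-\tau)\beta_\delta(u),
\]
where $\rho_\delta\colon I\to[0,1]$ is a smooth bump equal to $0$ near $0$ and to $1$ on $[\delta/2,1]$, and $\beta_\delta\colon[0,1]\hookrightarrow[0,\delta]$ is a smooth embedding with $\beta_\delta = \id$ near $0$. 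Injectivity, smoothness, and properties (i)--(iii) are then routine to verify.

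Using paracompactness of $\CE(P,M)$ together with the fact that the sets $U_\delta = \{e : e = \iota \text{ on the $\delta$-neighbourhood of } L\}$ form an open cover as $\delta \to 0$, one selects via a partition of unity a continuous function $\delta\colon\CE(P,M)\to(0,1]$ with $e \in U_{\delta(e)}$ for every $e$. The deformation retraction is then defined by
\[
H(e,s) = \begin{cases} e \circ \Phi^{\delta(e)}_{2s-1} & s\in[1/2,1], \\ \iota \circ \Phi^{\delta(e)}_{1-2s} & s\in[0,1/2], \end{cases}
\]
which satisfies $H(e,1) = e$, $H(e,0) = \iota$, and glues continuously at $s = 1/2$ because property (iii) together with $e \in U_{\delta(e)}$ forces $e\circ\Phi^{\delta(e)}_0 = \iota\circ\Phi^{\delta(e)}_0$.

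The main technical obstacle lies in arranging properties (i) and (iii) simultaneously: the family must fix a neighbourhood of $L$ while squeezing the entire complement into an (arbitrarily small) $\delta$-neighbourhood of $L$, all through smooth embeddings. The explicit formula for $\alpha_\tau^\delta$ handles this by concentrating the compression in the $t$-direction via $\rho_\delta$, so that for $t \geq \delta/2$ the $u$-coordinate is squeezed into $[0,\delta]$, while for $t \leq \delta/2$ no compression is needed since already $t\le\delta$ ensures the image lies in the $\delta$-neighbourhood of $L$.
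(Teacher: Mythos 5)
The approach is not the paper's (which reduces via the fibre sequence $\E(P\times I,M\times I)\to\CE(P,M)\to\E(P,M)$ from Remark \ref{EandCE}\ref{EandCE:iii} to contractibility of spaces of collars), and it has a genuine gap at the step where you select $\delta(e)$ continuously.

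The sets $U_\delta=\{e\in\CE(P,M): e=\iota \text{ on } \overline N_\delta(L)\}$ are \emph{not} open in the smooth topology. Each $U_\delta$ is the preimage of the singleton $\{\iota|_{\overline N_\delta(L)}\}$ under the continuous restriction map $\CE(P,M)\to C^\infty(\overline N_\delta(L),M\times I)$, hence it is \emph{closed}. Concretely, $\iota\in U_\delta$, but every neighbourhood of $\iota$ contains embeddings obtained from $\iota$ by a tiny bump supported at distance, say, $\delta/2$ from $L$, and these do not lie in $U_\delta$; so $\iota$ is not an interior point. With the $U_\delta$ closed rather than open, the partition-of-unity argument does not get off the ground. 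Worse, \emph{no} continuous selection $\delta\colon\CE(P,M)\to(0,1]$ with $e\in U_{\delta(e)}$ can exist: continuity at $\iota$ would force $e\in U_{\delta(\iota)/2}$ for all $e$ in some neighbourhood of $\iota$, contradicting the previous sentence. Even a weak-contractibility workaround fails, since a sequence $e_n\to\iota$ can have $\sup\{\delta: e_n\in U_\delta\}\to 0$, so one cannot choose a uniform $\delta$ even over a compact family. The underlying difficulty is that the locus where $e$ \emph{exactly} equals $\iota$ is not a lower-semicontinuous function of $e$; "squeeze into the region of agreement" arguments almost always founder on precisely this point, which is why contractibility of spaces of collars and tubular neighbourhoods is genuinely nontrivial.

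The paper avoids the issue entirely: by Remark \ref{EandCE}\ref{EandCE:iii}, restriction to $P\times\{1\}$ gives a fibre sequence $\E(P\times I,M\times I)\to\CE(P,M)\to\E(P,M)$, and when $P$ is a closed collar on $P\cap\partial M$ both outer terms are spaces of collar embeddings, hence contractible. Contractibility of the space of collars is itself established by an argument more robust than naive squeezing (typically via parametrised isotopy extension or a section-space description), and the paper cites it as known rather than reproving it. If you want a self-contained direct argument you would need to confront that nontrivial fact head on; as written, your proposal assumes a continuous selection that does not exist.
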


\begin{proof}
In view of \cref{EandCE} \ref{EandCE:iii} it suffices to show that $\E(P,M)$ and $\E(P\times I,M\times I)$ are contractible. This follows from the contractibility of the space of collars.
\end{proof}

\begin{lem}\label{lem:disc-bundle}
For a closed disc-bundle $\pi\colon D(P)\ra P$ with an embedding $D(P) \hookrightarrow M$ that extends the inclusion $P \subset M$ and satisfies $D(P)\cap \partial M=\pi^{-1}(P\cap \partial M)$, the map $\CE(D(P),M) \to \CE(P,M)$ induced by restriction to the $0$-section is an equivalence.
\end{lem}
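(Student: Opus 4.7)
The plan is to reduce to the classical uniqueness of tubular neighbourhoods via the fibre sequence in \cref{EandCE} \ref{EandCE:iii}. Applying that remark to both $P\subset M$ and $D(P)\subset M$ and taking restriction to the zero section yields a map of fibre sequences
\[
\begin{tikzcd}
\E(D(P)\times I, M\times I) \rar \dar & \CE(D(P),M) \rar \dar & \E(D(P),M) \dar \\
\E(P\times I, M\times I) \rar & \CE(P,M) \rar & \E(P,M).
\end{tikzcd}
\]
By the five lemma applied to the resulting long exact sequences of homotopy groups, it suffices to show that the two outer vertical restriction maps are weak equivalences on the path components hit by the horizontal maps in the diagram.

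For the right outer map I would show that $\E(D(P),M)\to\E(P,M)$ is a Serre fibration whose fibre over an embedding $f\colon P\hookrightarrow M$ is equivalent to the space of tubular neighbourhoods of $f(P)$ of type $D(P)$ extending the prescribed tubular neighbourhood $\pi^{-1}(P\cap\partial M)$ over $P\cap\partial M$. By the parametric tubular neighbourhood theorem this space is contractible whenever it is non-empty, and it is non-empty precisely when the normal bundle of $f(P)$ is isomorphic to $D(P)$ compatibly with that boundary structure. Since every $f$ in the image of $\CE(P,M)\to\E(P,M)$ is concordant to the inclusion through embeddings agreeing with the inclusion near $P\cap\partial M$, its normal bundle admits such an isomorphism, so all relevant components are covered. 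The left outer map is handled identically, treating $P\times I\subset M\times I$ as a submanifold with corners whose boundary tubular neighbourhood $D(P)\times I\subset M\times I$ is fixed near $P\times\{0,1\}\cup(\partial M\cap P)\times I$.

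The main obstacle is the careful formulation of the parametric tubular neighbourhood theorem with the correct boundary conditions: in the concordance setting one needs to keep a prescribed tubular neighbourhood fixed not only over $P\cap\partial M$ but also near $P\times\{0\}$ (and near $P\times\{1\}$ for the $\E(P\times I, M\times I)$ case). This is essentially classical, but requires setting up the conventions carefully.
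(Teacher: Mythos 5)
Your reduction through the fibre sequence of \cref{EandCE}~\ref{EandCE:iii} is a reasonable-looking strategy, but the pivotal contractibility claim for the outer vertical maps is false. The fibre of $\E(D(P),M)\to\E(P,M)$ over $f$ is the space of embeddings $g\colon D(P)\hookrightarrow M$ with $g|_P=f$ that agree with the inclusion near $D(P)\cap\partial M$. Uniqueness of tubular neighbourhoods does \emph{not} make this contractible: that theorem gives contractibility of the space of tubular neighbourhoods only up to bundle automorphisms of $D(P)$ (equivalently, of the space of such $g$ with a \emph{prescribed} normal derivative along $P$). With $D(P)$ itself fixed, taking normal derivatives along the zero section identifies the fibre with a space of sections over $P$, fixed near $P\cap\partial M$, of a bundle whose fibre is $\Inj(\bfR^k,\bfR^{d-\dim P})$ with $k$ the fibre dimension of $D(P)$; this section space is contractible only when $P\cap\partial M\hookrightarrow P$ is a deformation retract. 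For instance, $P=S^1\subset\bfR^3=M$ with $D(P)=S^1\times D^2$ gives a fibre equivalent to $\Map(S^1,\GL_2(\bfR))$, which has infinitely many components. The other outer map $\E(D(P)\times I,M\times I)\to\E(P\times I,M\times I)$ fails for the same reason, now with a section space over $P\times I$ fixed near $P\times\{0,1\}\cup(\partial M\cap P)\times I$, which is also not a deformation retract of $P\times I$ in general.

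These two failures do cancel in the fibre sequence of vertical homotopy fibres, but to see the cancellation one has to actually compare the section spaces, and the decisive input is that the \emph{middle} vertical fibre is a section space over $P\times I$ fixed near $P\times\{0\}\cup(\partial M\cap P)\times I$ --- and that subset, unlike the two above, \emph{is} a deformation retract of $P\times I$. That is precisely what the paper's proof uses directly: by parametrised isotopy extension the homotopy fibre of $\CE(D(P),M)\to\CE(P,M)$ agrees with the strict fibre, taking derivatives turns it into exactly such a section space of fibrewise linear injections of normal bundles, and the deformation retraction gives contractibility at once. So the detour through $\E$ does not sidestep the essential geometric observation and, as written, rests on a false step. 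There is also a secondary issue you gesture at but do not resolve: a five-lemma comparison of long exact sequences gives $\pi_n$-isomorphisms at a single chosen basepoint, so deducing a weak equivalence requires additional $\pi_0$- and basepoint-bookkeeping.
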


\begin{proof}
The homotopy fibre of the restriction map $\CE(D(P),M)\to \CE(P,M)$ over $e \in  \CE(P,M)$ agrees as a result of the parametrised isotopy extension theorem with the strict fibre. The latter is, by taking derivatives, equivalent to the space of sections over $P\times I$ fixed near $P \times \{0\}\cup (\partial M\cap P)\times I$ of the bundle of fibrewise linear injections of $\nu_{D(P) \times I}$ into $\nu_{M \times I}$ over $P \times I$; here $\nu_{D(P) \times I}$ is the normal bundle of $P \times I\subset D(P) \times I$ and $\nu_{M \times I}$ is the normal bundle of $e(P \times I)\subset M \times I$. Since $P \times \{0\}\cup (\partial M\cap P)\times I$ is a deformation retract of $P\times I$, this section space is contractible. 
\end{proof}

\subsection{Previous multiple disjunction results} \label{sec:multiple-disjunction}
A key ingredient in the proof of the multirelative stabilisation result in \cref{thm:main-relative} is the following multirelative generalisation of Morlet's lemma of disjunction from \cite{Goodwillie}. We use the notation from \cref{sec:stm-main-thm}.

\begin{thm}[Goodwillie]\label{thm:multipl-disjunction}
If $d-p\ge3$ and $d-q_i\ge3$ for all $i$, then the $r$-cube $\CE(P,M_\bullet)$ is $(d-p-2+\Sigma)$-cartesian.
\end{thm}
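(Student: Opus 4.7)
The plan is to identify this with the main theorem of \cite{Goodwillie} and translate notation. The argument there proceeds by a double induction: an outer induction on the number of handles in a handle decomposition of $P$ relative to $\partial M \cap P$, and an inner induction on the analogous handle count for each $Q_i$, with the base case being the multirelative Morlet disjunction lemma in the situation where $P$ and all the $Q_i$ are discs.

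For the outer induction, the starting case with $P$ a closed collar on $\partial M \cap P$ is immediate from \cref{lem:collar}: each space $\CE(P,M_S)$ is contractible, so the $r$-cube $\CE(P,M_\bullet)$ is $\infty$-cartesian. In the inductive step I would write $P = P' \cup H^k$, with $H^k$ a handle of index $k \leq p$ attached along part of $\partial P'$, and apply the parametrised isotopy extension theorem to produce, for each $S \subseteq \ul{r}$, a fibre sequence
\[
	\CE\bigl(H^k,\, M_S \setminus \nu(P')\bigr) \lra \CE(P,M_S) \lra \CE(P',M_S).
\]
These assemble into a fibre sequence of $r$-cubes. Using \cref{lem:cubelemma} \ref{cubelemma:iii}, I would translate the cartesianness of $\CE(P,M_\bullet)$ into that of the fibre cube and the base cube; the base is controlled by the inductive hypothesis for $P'$, and by \cref{lem:disc-bundle} the fibre cube is equivalent to the concordance embedding cube of a $k$-disc in $M \setminus \nu(P')$, which a further inner induction (of the same shape, now varying the $Q_i$'s one handle at a time) reduces to the all-discs base case. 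Tracking handle indices through \cref{lem:cubelemma} \ref{cubelemma:i} gives exactly the bound $(d-p-2+\Sigma)$.

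The base case, where $P$ and all $Q_i$ are discs, is the multirelative Morlet disjunction lemma proved in \cite{Goodwillie}. It is itself established by a secondary induction on $r$: the single disjunction case $r=1$ uses a general-position analysis of double-point sets of embeddings combined with a Blakers--Massey argument of the type in \cref{thm:blakers-massey}, yielding the expected $(2d-p-q_1-4)$-connectivity; the case $r \geq 2$ is then a delicate iteration of single disjunction using the cube-theoretic manipulations of \cref{sec:cubes}, from which the additive form of $\Sigma$ emerges.

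The hard part will be this disc base case: it contains the geometric heart of disjunction and is where the summand $d - q_i - 2$ in $\Sigma$ first appears. All the other steps are formal cube calculus and handle induction, enabled by the lemmas of this section. In practice I would simply cite the main theorem of \cite{Goodwillie}, which is devoted precisely to proving this multirelative disjunction statement for concordance embeddings, and verify that the indexing conventions match the present formulation.
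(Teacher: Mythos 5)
Your bottom line coincides with the paper's: Theorem~\ref{thm:multipl-disjunction} is a citation to \cite{Goodwillie} (the paper cites Theorem~D there for $r\geq 1$, and handles $r=0$ via the discussion in Remark~\ref{EandCE}), so "simply cite the main theorem of \cite{Goodwillie}" is exactly what the paper does. The one point you gloss over that the paper flags explicitly is the boundary-condition mismatch: Theorem~D of \cite{Goodwillie} assumes $\partial M\cap P=\partial P$ and $\partial M\cap Q_i=\partial Q_i$, whereas here $P$ and $Q_i$ are merely transverse to $\partial M$; the paper notes this reduction is easy and cites \cite[p.\,670]{GoodwillieKlein} for it. Your sketch of the internal structure of \cite{Goodwillie} (double handle induction to reduce to the disc case, then Blakers--Massey) is a reasonable impression but is not needed here and isn't fully faithful to that paper's actual inductive scheme — in particular, the multirelative disc case does not reduce to iterated single disjunction as simply as you suggest, and Theorem~D already covers general $P$ and $Q_i$, so the outer handle induction you propose would be redundant given the citation.
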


\begin{proof}
We already discussed this for $r=0$ in \cref{EandCE} \ref{EandCE:ii}. The case $r\ge1$ is treated in \cite[Theorem D]{Goodwillie}. There it is assumed that $\partial M\cap P=\partial P$ and $\partial M\cap Q_i=\partial Q_i$ for all $i$, but as pointed out in \cite[p.\,670]{GoodwillieKlein} the general case can be easily be reduced to this.
\end{proof}

For $r=0$, this statement includes Hudson's concordance-implies-isotopy theorem for concordance embeddings \cite[Theorem 2.1, Addendum 2.1.2]{HudsonConcordance}.

\begin{thm}[Hudson]\label{thm:hudson} 
The space $\CE(P,M)$ is connected if $d-p\ge3$.
\end{thm}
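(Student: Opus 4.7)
The plan is to derive Hudson's statement immediately from the $r=0$ case of \cref{thm:multipl-disjunction}. Specialising that theorem to the absolute setting with no auxiliary submanifolds $Q_i$, the indexing set $\ul{r}$ becomes empty, the sum $\Sigma = \sum_{i=1}^{0}(d-q_i-2)$ equals $0$, and the diagram $\CE(P,M_\bullet)$ collapses to the $0$-cube whose single value is $\CE(P,M_\varnothing)=\CE(P,M)$. The codimension conditions on the $Q_i$ are vacuous, so the only hypothesis required is $d-p \ge 3$, which is precisely what is assumed.

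With this in hand, the only remaining task is to translate ``$k$-cartesian $0$-cube" back into ordinary connectivity. From the conventions recalled in \cref{sec:cubes}, a $0$-cube $X$ is $k$-cartesian exactly when $X(\varnothing)$ is $(k-1)$-connected; thus \cref{thm:multipl-disjunction} gives that $\CE(P,M)$ is $(d-p-3)$-connected. Under the hypothesis $d-p \ge 3$ we have $d-p-3 \ge 0$, and by the same conventions a $0$-connected space is in particular non-empty and path-connected --- that is, connected, as required.

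No genuine obstacle arises once \cref{thm:multipl-disjunction} is quoted, but it is worth flagging where the substance sits: the $r=0$ case of that theorem is itself the content of \cref{EandCE}\,\ref{EandCE:ii}, whose non-trivial input is the case $P=D^p$ treated in \cite{BurgheleaLashofRothenberg} via a Whitney-trick style disjunction argument valid in codimension $\ge 3$, with the general case following by induction over a handle decomposition of $P$ relative to $P\cap\partial M$. Without appealing to \cref{thm:multipl-disjunction}, the main obstacle would be reproducing this classical concordance-implies-isotopy argument from scratch; within the present framework, however, the statement is a formal connectivity corollary of what has already been recorded.
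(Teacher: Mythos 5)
Your proposal is correct and matches the paper's own reasoning exactly: the sentence immediately preceding \cref{thm:hudson} states that the $r=0$ case of \cref{thm:multipl-disjunction} "includes" Hudson's theorem, which is precisely the specialization you carry out (a $(d-p-2)$-cartesian $0$-cube means a $(d-p-3)$-connected space, and $d-p-3\ge 0$ under the hypothesis). Your closing remark correctly locates the real content in \cref{EandCE}\,\ref{EandCE:ii} via \cite{BurgheleaLashofRothenberg}, which is also where the paper's proof of \cref{thm:multipl-disjunction} sends the reader for $r=0$.
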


Using \cref{thm:multipl-disjunction}, Goodwillie and Klein proved a similar result for spaces of ordinary embeddings \cite[Theorem A]{GoodwillieKlein}. As in \cref{EandCE} \ref{EandCE:iii}, we write $\E(P,M)$ for the space of embeddings $P\hookrightarrow M$ that coincide with the inclusion in a neighbourhood of $P\cap\partial M$. As with $\CE(P,M_\bullet)$, the spaces $\E(P,M_S)$ for subsets $S\subseteq\ul{r}$ assemble to an $r$-cube $\E(P,M_\bullet)$.

\begin{thm}[Goodwillie--Klein]\label{thm:multipl-disjunction-emb}
If $d-p\ge3$ and $d-q_i\ge3$ for all $i$, and if $r\ge 1$, then the $r$-cube $\E(P,M_\bullet)$ is $(1-p+\Sigma)$-cartesian.
\end{thm}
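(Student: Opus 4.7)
My approach is to deduce the embedding version from the concordance-embedding version (\cref{thm:multipl-disjunction}) via the fibre sequence from \cref{EandCE} \ref{EandCE:iii}. For each $S\subseteq\underline r$ we have
\[
\E(P\times I, M_S\times I) \lra \CE(P, M_S) \lra \E(P, M_S),
\]
and these assemble into a fibre sequence of $r$-cubes $F_\bullet\to \CE(P,M_\bullet)\to\E(P,M_\bullet)$ with $F_\bullet=\E(P\times I,M_\bullet\times I)$. By \cref{thm:multipl-disjunction}, the middle cube is $(d-p-2+\Sigma)$-cartesian, which exceeds the target bound $(1-p+\Sigma)$ by $d-3\ge 0$. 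Passing to total homotopy fibres (via \cref{lem:cubelemma} \ref{cubelemma:iii}) and using the long exact sequence of homotopy groups, the cube $\E(P,M_\bullet)$ is at least $\min(d-p-2+\Sigma,\,\text{cart}(F_\bullet)+1)$-cartesian, so it suffices to show that $F_\bullet$ is at least $(-p+\Sigma)$-cartesian.

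To estimate $F_\bullet$, I would use induction on the handle dimension $p$ of $P$. The inductive step proceeds by handle-decomposition: write $P=P'\cup H$ with $H$ a handle of index $p$ and $P'$ of handle dimension $<p$. The restriction fibration $\E(P,M_\bullet)\to\E(P',M_\bullet)$ has fibre cube equivalent, by \cref{lem:disc-bundle}, to $\E(D^p,M_\bullet\setminus\nu(P'))$, while the base cube is $(2-p+\Sigma)$-cartesian by the inductive hypothesis. This reduces the problem to the case $P=D^p$. For the disk case, the remaining fibre cube $\E(D^p\times I,M_\bullet\times I)$ is itself an embedding cube of a $(p+1)$-disk inside the $(d+1)$-manifold $M\times I$. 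A direct handle-theoretic computation shows that under crossing with $I$ both the relative handle dimension of $D^p\times I$ and each $q_i$ for $Q_i\times I$ increase by $1$, so the codimension $d-p$ and the quantity $\Sigma$ are both preserved. Iterating the fibre sequence and combining the uniform gain from \cref{thm:multipl-disjunction} at each stage — with a base case at $p=0$ provided by the classical multirelative connectivity of configuration cubes in manifolds with submanifolds removed — yields the required $(-p+\Sigma)$-bound for $F_\bullet$.

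The main obstacle is rigorously terminating this iterative argument: since crossing with $I$ preserves both the codimension $d-p$ and the quantity $\Sigma$, naive induction on $p$, $d$, or $d-p$ does not close. One must therefore organise the induction more carefully, for instance by combining the handle-dimension induction on $P$ with a secondary induction on the number of $Q_i$'s (reducing ultimately to an $r=0$ Haefliger-type connectivity estimate) or by invoking stability results from smoothing theory to ground the iteration. A secondary subtlety is verifying the relative handle dimensions for $P\times I$ and $Q_i\times I$ in $M\times I$ under the boundary condition $(P\times I)\cap\partial(M\times I)=(P\cap\partial M)\times I\cup P\times\{0,1\}$, which must be done carefully to make the connectivity bookkeeping sharp. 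Once these points are settled, the $(1-p+\Sigma)$-cartesianness of $\E(P,M_\bullet)$ emerges from the $(d-p-2+\Sigma)$-cartesianness of $\CE(P,M_\bullet)$ by exploiting the concordance-to-embedding gap $d-3\ge 0$ at each inductive stage.
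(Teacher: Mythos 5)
The paper does not prove this statement itself; it is imported directly as a citation to Goodwillie--Klein \cite[Theorem A]{GoodwillieKlein}, so there is no in-paper argument to compare your proposal against. That said, your proposal has a concrete gap that is worth spelling out.

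The problem lies in the connectivity bookkeeping for the fibre sequence of $r$-cubes $F_\bullet\to\CE(P,M_\bullet)\to\E(P,M_\bullet)$. Passing to total homotopy fibres does give a fibre sequence of spaces $\mathrm{tfib}\,F\to\mathrm{tfib}\,\CE\to\mathrm{tfib}\,\E$, but its long exact sequence only yields $\pi_i(\mathrm{tfib}\,\E)=0$ in the range $1\le i\le\min\bigl(\mathrm{cart}(\CE)-1,\mathrm{cart}(F)\bigr)$; it says nothing about $\pi_0(\mathrm{tfib}\,\E)$, since the sequence terminates and the map $\pi_0(\mathrm{tfib}\,\CE)\to\pi_0(\mathrm{tfib}\,\E)$ need not be surjective. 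The estimate you rely on, $\mathrm{cart}(\E)\ge\min\bigl(\mathrm{cart}(\CE),\mathrm{cart}(F)+1\bigr)$, is therefore false in general. This is not a minor technicality: your proposed grounding of the iteration---that $\mathrm{cart}(F^{(n)})\ge -1$ holds trivially for any cube, so one can iterate until the constant bound $d-p-2+\Sigma$ from \cref{thm:multipl-disjunction} dominates---would, were the estimate valid, prove that $\E(P,M_\bullet)$ is $(d-p-2+\Sigma)$-cartesian, which is strictly stronger than the claimed $(1-p+\Sigma)$ by $d-3$, and this stronger bound is simply wrong. Take $r=1$, $P=\{\ast\}$, $Q_1$ a second interior point, $M=D^d$. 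Then $\E(\ast,M_\bullet)$ is the $1$-cube $\interior(M)\setminus\{Q_1\}\hookrightarrow\interior(M)$, whose total homotopy fibre is equivalent to $S^{d-1}$; the cube is therefore exactly $(d-1)$-cartesian, in agreement with $1-p+\Sigma=d-1$, and decidedly not $(2d-4)$-cartesian when $d\ge 5$.

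Tracing through your iteration shows where it secretly fails: the iterated total fibres $\mathrm{tfib}\,\E^{(n)}$ eventually become disconnected, and at that stage the $\pi_0$-surjectivity hypothesis silently drops out. Verifying connectedness of $\mathrm{tfib}\,\E^{(n)}$ at each stage is tantamount to already knowing the theorem for the later iterates, so the recursion does not ground out. The remedies you float---a secondary induction on $r$, or input from smoothing theory---do not engage this: as you yourself observe, $r$, $d-p$, and $\Sigma$ are all invariant under the crossing-with-$I$ recursion, and the obstruction is that the connectivity estimate being iterated is wrong at the level of $\pi_0$. The Goodwillie--Klein argument does use \cref{thm:multipl-disjunction} as an ingredient (as the paper's surrounding remark says), but it proceeds by a different mechanism, essentially an induction over a handle decomposition of $P$ comparing $\E(P,M_\bullet)$ to $\CE(P,M_\bullet)$ handle by handle rather than iterating the fibre sequence over $P\times I^n$.
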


The cube $\E(P,M_\bullet)$ appearing in \cref{thm:multipl-disjunction-emb} arises by removing submanifolds from the target, but there is also a version of this theorem that deals with removing submanifolds from the source \cite[Theorem C]{GoodwillieKlein}. We will have use for a version that combines these two. To state it, in addition to $P,Q_1,\ldots,Q_r \subset M$, we also fix disjoint compact codimension $0$ submanifolds $B_1,\ldots,B_k \subset P$ for $k \geq 0$, all transverse to $\partial P$. We write $P_T$ for the closure of $P\backslash  \cup_{j \in T} B_j$ for $T\subseteq \ul{k}$; for example, $P_\varnothing = P$ and $P_{\ul{k}}$ is the closure of $P \backslash{\cup_{j=1}^k B_j}$. The spaces $\E(P_T,M_S)$ assemble into a $(k+r)$-cube $\E(P_{\smallsquare},M_\bullet)$ by post- and precomposition with the inclusions. In addition to the handle dimension $q_i$ of the inclusion $\partial M\cap Q_i\subset Q_i$, we write $b_j$ for the handle dimension of the inclusion $\partial B_j\backslash( \partial P \cap B_j)\subset B_i$ and abbreviate $\Sigma' = \sum_{j=1}^k (d-b_j-2)$. The 
following can be deduced from \cref{thm:multipl-disjunction-emb} by a simple variant of the arguments from \cite[p.~653--655]{GoodwillieKlein}. 

\begin{cor}\label{cor:disjunction-excision} If $d-p \geq 3$, $d-q_i \geq 3$ for all $i$, and $d-b_j \geq 3$ for all $j$, and if $k+r\ge 2$, then the $(k+r)$-cube $\E(P_{\smallsquare},M_\bullet)$ is $(3-d+\Sigma+\Sigma')$-cartesian.
\end{cor}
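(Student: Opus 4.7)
The plan is to reduce \cref{cor:disjunction-excision} directly to \cref{thm:multipl-disjunction-emb} via a parametrised isotopy extension argument that converts ``source removal'' into ``target removal''. The trivial case $k=0$ forces $r\ge2$, and \cref{thm:multipl-disjunction-emb} immediately gives the $r$-cube $\E(P,M_\bullet)$ as $(1-p+\Sigma)$-cartesian, which is at least $(3-d+\Sigma)$-cartesian since $d-p\ge3$. For $k\ge1$, single out the last source submanifold $B_k$ and view the $(k+r)$-cube $\E(P_\smallsquare,M_\bullet)$ as a $1$-cube of $(k-1+r)$-cubes in the $B_k$-direction, namely as the restriction map
\[
\E(P_{\smallsquare'},M_\bullet)\lra\E(P_{\smallsquare'\cup\{k\}},M_\bullet),
\]
indexed by $\smallsquare'\subseteq\ul{k-1}$. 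By \cref{lem:cubelemma} \ref{cubelemma:iii} it suffices to show that the cube of homotopy fibres over any basepoint is $(3-d+\Sigma+\Sigma')$-cartesian, and path-connectedness of the $\E(-,-)$-spaces (the analogue of \cref{thm:hudson}, itself provided by \cref{thm:multipl-disjunction-emb}) lets us take those basepoints to be standard inclusions.

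Over the standard inclusion, parametrised isotopy extension identifies the homotopy fibre of $\E(P_T,M_S)\ra\E(P_{T\cup\{k\}},M_S)$ with the embedding space $\E(B_k,\,M_S\setminus\interior(P_{T\cup\{k\}}))$, equipped with the standard boundary condition along $\partial B_k$. As $(T,S)$ varies over $\cP(\ul{k-1})\times\cP(\ul r)$, these spaces assemble into a $(k-1+r)$-cube whose targets grow: enlarging $S$ reintroduces $Q_i$ into the target, and enlarging $T$ reintroduces $\interior(B_j)$ for $j<k$. After relabelling $T\leftrightarrow\ul{k-1}\setminus T$ and $S\leftrightarrow\ul r\setminus S$ to match the convention of \cref{thm:multipl-disjunction-emb}, this is exactly a target-removal cube for $\E(B_k,-)$ with removed submanifolds $Q_1,\dots,Q_r$ and $B_1,\dots,B_{k-1}$ of handle dimensions $q_i$ and $b_j$.

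\cref{thm:multipl-disjunction-emb} then applies to this fibre cube---it has $k-1+r\ge1$ directions by the hypothesis $k+r\ge2$ and inherits the handle-dimension bounds---and yields that it is
\[
\Bigl(1-b_k+\sum_{i=1}^r(d-q_i-2)+\sum_{j=1}^{k-1}(d-b_j-2)\Bigr)\text{-cartesian}=(3-d+\Sigma+\Sigma')\text{-cartesian,}
\]
as required. The main obstacle is the precise identification of the fibres as a target-removal cube of the form literally handled by \cref{thm:multipl-disjunction-emb}: one must verify that each $B_j$ (for $j<k$), when viewed as a submanifold to be removed from $M_S\setminus\interior(P_{T\cup\{k\}})$, inherits the handle dimension $b_j$ with respect to the correct part of its new boundary, and that the boundary condition on $\partial B_k$ matches the standard one. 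This careful bookkeeping with collars and tubular neighbourhoods is the ``simple variant'' of \cite[pp.\,653--655]{GoodwillieKlein} alluded to in the excerpt.
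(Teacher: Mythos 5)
Your strategy matches what the paper alludes to in citing \cite[p.~653--655]{GoodwillieKlein}: single out one source submanifold $B_k$, use parametrised isotopy extension to identify the resulting cube of homotopy fibres with a target-removal cube for $\E(B_k,-)$, and apply \cref{thm:multipl-disjunction-emb}. Your numerology also checks out, including the role of the hypothesis $k+r\geq 2$, and you correctly flag the need to replace $\interior(P_{T\cup\{k\}})$ by an open tubular neighbourhood and to match up boundary pieces so that the removed submanifolds inherit handle dimensions $b_j$.

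The gap is in the sentence asserting that ``path-connectedness of the $\E(-,-)$-spaces (the analogue of \cref{thm:hudson}, itself provided by \cref{thm:multipl-disjunction-emb}) lets us take those basepoints to be standard inclusions.'' This is not true. Unlike $\CE(P,M)$, the embedding space $\E(P,M)$ is typically disconnected---knotting is the whole phenomenon being measured---and neither \cref{thm:hudson} (a statement about $\CE$, not $\E$) nor \cref{thm:multipl-disjunction-emb} (which requires $r\geq 1$ and in any case says nothing about $\pi_0$ of a single space) furnishes any such connectivity. Since \cref{lem:cubelemma}~\ref{cubelemma:iii} requires the fibre cube to be $(3-d+\Sigma+\Sigma')$-cartesian over \emph{every} $e\in\E(P_{\{k\}},M_\varnothing)$, you must deal with arbitrary basepoints. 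The isotopy-extension identification still goes through for such $e$, with the submanifolds to be removed replaced by the $e$-images of the relevant pieces and $B_k$ replaced by the $B_k$-part of a chosen extension of $e$, after which \cref{thm:multipl-disjunction-emb} applies as before; but you then have to contend with the case in which $e$ admits no extension over $B_k$ at all, so that the fibre at the initial vertex is empty. Your appeal to connectivity does not rule this out, and it cannot be waved away since $3-d+\Sigma+\Sigma'$ is frequently nonnegative. As written the argument has a real hole at exactly this point, and this basepoint bookkeeping is precisely the content of the cited pages of \cite{GoodwillieKlein}.
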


\begin{rem}\label{rem:analyticity}In the language of the functor calculus of \cite{WeissImmersion,GoodwillieWeiss,WeissImmersionErrata} (``manifold calculus'' or ``embedding calculus''), the multirelative connectivity results Theorems \ref{thm:multipl-disjunction} and \ref{thm:multipl-disjunction-emb} can be viewed as analyticity statements for the functors
\[
P\mapsto \CE(P,M)\quad\text{and}\quad P\mapsto \E(P,M).
\]
defined on the poset of compact submanifolds of a fixed manifold $M$ (or rather, in a setting close to \cite[Section\,2]{GoodwillieWeiss}, for the analogous functors defined on the poset of open subsets of $M$). For $\CE$, this results by stabilisation in an analyticity statement in the sense of \cite{GoodwillieCalculusII} and \cite{GoodwillieCalculusIII} (``homotopy calculus'') for the functor given by stable concordance theory (there is also a different way to prove that using \cite[Theorem 4.6]{GoodwillieCalculusII}). Our main result---the multirelative stability result \cref{thm:main-relative}---can be viewed an analyticity statement for the functor
\[
P\mapsto \hofib_\inc\big(\CE(P,M)\to \CE(P\times J,M\times J)\big).
\]
All these analyticity results have the same degree of analyticity, but differ in ``excess''.
\end{rem}

\subsection{The delooping trick and scanning}\label{sec:delooping}
We now explain a way to relate concordance embeddings of discs of different dimensions, sometimes called the delooping trick. It goes back at least to \cite[23-25]{BurgheleaLashofRothenberg}. For an embedded disc $D^p\subset M$ with $D^p\cap\partial M=\partial D^p$ and $p\ge 1$, we first define a \emph{scanning map} of the form
\begin{equation}\label{equ:scanning-map}
	\tau \colon \CE(D^p,M) \lra \Omega \CE(D^{p-1},M).
\end{equation}
For a submanifold $K\subset [-1,1]$ we abbreviate
\[D^p_K\coloneq \{x\in D^p\mid x_1\in K\}\subset D^p,\]
so in particular $D^p_{\{0\}}=\{0\}\times D^{p-1}\cong D^{p-1}$. To construct \eqref{equ:scanning-map}, we consider the decomposition $\smash{D^p=D^p_{[-1,0]}\cup_{D^p_{\{0\}}} D^p_{[0,1]}}$ and the resulting commutative diagram of restriction maps
\begin{equation}\label{equ:delooping-square}
	\begin{tikzcd}\CE(D^p,M) \rar \dar & \CE(D^p_{[0,1]},M) \dar \\
	\CE(D^p_{[-1,0]},M) \rar & \CE(D^p_{\{0\}},M)\end{tikzcd}
\end{equation}
which induces a map from $\CE(D^p,M)$ to the homotopy pullback of the other terms. The bottom-left and top-right terms are contractible by \cref{lem:collar}, so the homotopy pullback is equivalent to the loop space of $\smash{\CE(D^p_{\{0\}},M)\cong\CE(D^{p-1},M)}$. This defines the scanning map \eqref{equ:scanning-map}, up to contractible choices. More precisely, writing $\smash{\widetilde{\Omega}\CE(D^{p-1},M)}$ for the homotopy pullback, we have (without choices) a zig-zag
\begin{equation}\label{equ:scanning-zig-zag}
	\CE(D^p,M)\lra \widetilde{\Omega}\CE(D^{p-1},M)\xlla{\simeq}\Omega\CE(D^{p-1},M)
\end{equation}
where the equivalence is induced by including the basepoints in the spaces $\smash{\CE(D^p_{[0,1]},M)}$ and $\smash{\CE(D^p_{[-1,0]},M)}$. The map \eqref{equ:scanning-map} can also be viewed as the map on vertical homotopy fibres in \eqref{equ:delooping-square}, where the fibres are taken over the basepoints given by the inclusions of $\smash{D^p_{[-1,0]} \times I }$ and $\smash{D^p_{\{0\}} \times I}$ into $M \times I$. This map on vertical homotopy fibres is equivalent (as a result of the parametrised isotopy extension theorem) to the inclusion 
\[
	\CE\big(D^p_{[\epsilon,1]},M\backslash ( T\cup D_{[-1,-\epsilon]}^p))\subset \CE\big(D^p_{[\epsilon,1]},M\backslash T),
\]
where $\epsilon\in(0,1)$ and $T$ is an open tubular neighbourhood of $\smash{D^p_{\{0\}}}\subset M$ with (see \cref{fig:handle-splitting})  
\[
	\smash{D^p_{[\epsilon,1]}\cap\partial (M\backslash T)=\partial D^p_{[\epsilon,1]}}\quad\text{and}\quad\smash{D^p_{[-1,-\epsilon]}\cap\partial (M\backslash T)=\partial D^p_{[-1,-\epsilon]}}.
\]

Turning to the multirelative setting of \cref{sec:stm-main-thm}, we note that the zig-zag \eqref{equ:scanning-zig-zag} is natural in inclusions of submanifolds $M\subset M'$ with $\partial M\cap P=\partial M'\cap P$, so that up to contractible choices we have a scanning map of $r$-cubes
\begin{equation}\label{scanning-cube}
	\tau\colon\CE(D^p,M_\bullet) \lra \Omega \CE(D^{p-1},M_\bullet)
\end{equation}
that agrees up to equivalence with the inclusion of $r$-cubes \begin{equation}\label{eqref:inclusion-delooping}
	\CE\big(D^p_{[\epsilon,1]},M_\bullet\backslash ( T\cup D_{[-1,-\epsilon]}^p))\subset \CE\big(D^p_{[\epsilon,1]},M_\bullet\backslash T).
\end{equation}

\begin{figure}\begin{tikzpicture}[scale=1.2]
		\draw (0,0) circle (2cm);
		\node at (0,2) [above] {$M$};
		\draw [thick,Periwinkle,fill=Periwinkle!15!white] (2,0) arc (0:360:2 and 0.6);
		\draw [fill=white] (.3,.6) circle (.3cm);
		\draw [fill=white] ({.3+.3*cos(145)},{.6+.3*sin(145)}) -- ({-.3+.3*cos(145)},{-.55+.3*sin(145)}) -- ({-.3+.3*cos(180+140)},{-.55+.3*sin(180+140)}) -- ({.3+.3*cos(180+145)},{.6+.3*sin(180+145)});
		\draw [fill=white](-.3,-.55) circle (.3cm);
		\draw [thick,Mahogany] (-.31,-.58) -- (.3,.6);
		
		\begin{scope}
			\clip (.3,.6) circle (.3cm);
			\draw [dashed] (2,0) arc (0:180:2 and 0.6);
		\end{scope}
		
		\begin{scope}
			\clip (-.3,-.55) circle (.3cm);
			\draw [dashed] (2,0) arc (0:-180:2 and 0.6);
		\end{scope}
		
		\node [Periwinkle] at (-2.6,1.2) {$D^p_{[-1,-\epsilon]}$};
		\draw [->,Periwinkle] (-2.12,1.25) to[out=0,in=100] (-1,0);
		
		\node [Periwinkle] at (2.6,1.2) {$D^p_{[\epsilon,1]}$};
		\draw [->,Periwinkle] (2.12,1.25) to[out=180,in=80] (1,0);
		
		\node [Mahogany] at (-2.5,-1.2) {$D^{p-1}$};
		\draw [->,Mahogany] (-2.12,-1.25) to[out=0,in=-135] (-.37,-.62);
		\node  at (.35,1.1) {$T$};
	\end{tikzpicture}
	\caption{The subspaces of $M$ appearing in the delooping trick.}
	\label{fig:handle-splitting}
\end{figure}
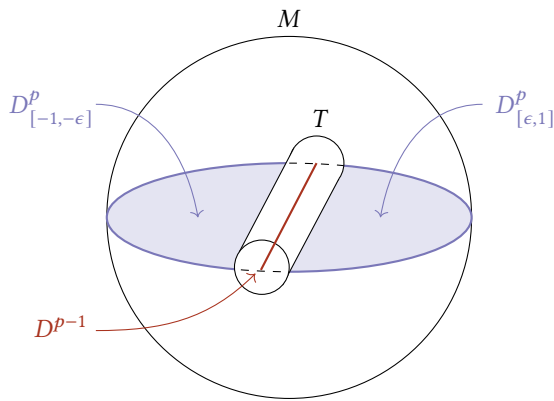

\begin{lem}\label{lem:delooping} 
If $d-p \geq 3$ and $d-q_i \geq 3$ for all $i$, then map of $r$-cubes
\[
	\tau\colon\CE(D^p,M_\bullet) \lra \Omega \CE(D^{p-1},M_\bullet)
\]
is $(2\cdot(d-p-2)+\Sigma)$-cartesian when considered as an $(r+1)$-cube.
\end{lem}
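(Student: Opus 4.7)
My plan is to apply the multiple disjunction theorem \cref{thm:multipl-disjunction} directly to the $(r+1)$-cube produced by the delooping trick. As established in \cref{sec:delooping}, the $(r+1)$-cube $\tau$ is equivalent, via the scanning zigzag \eqref{equ:scanning-zig-zag}, to the $(r+1)$-cube given by the inclusion of $r$-cubes \eqref{eqref:inclusion-delooping},
\[
\CE\bigl(D^p_{[\epsilon,1]},\,M_\bullet\setminus(T\cup D^p_{[-1,-\epsilon]})\bigr)\subset \CE\bigl(D^p_{[\epsilon,1]},\,M_\bullet\setminus T\bigr).
\]
Setting $M^*\coloneqq M\setminus T$ and adjoining $Q_{r+1}\coloneqq D^p_{[-1,-\epsilon]}$ as an additional submanifold disjoint from $Q_1,\ldots,Q_r$ (possible after shrinking $T$ if necessary), this $(r+1)$-cube is precisely the cube $\CE(D^p_{[\epsilon,1]},M^*_\bullet)$ of \cref{sec:stm-main-thm}, now indexed by subsets of $\ul{r+1}$, and so it falls within the framework of \cref{thm:multipl-disjunction}.

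The only remaining step is to compute handle dimensions. Inside $M^*$, the hemispherical part of $\partial D^p_{[\epsilon,1]}$ on $\partial D^p$ belongs to $\partial M$, while the flat cross-section $D^p_{\{\epsilon\}}$ belongs to $\partial T\subset \partial M^*$; thus $\partial M^*\cap D^p_{[\epsilon,1]}$ equals the whole boundary sphere $\partial D^p_{[\epsilon,1]}$. Since $D^p_{[\epsilon,1]}\cong D^p$ is built from a collar on its boundary sphere by a single $p$-handle, its handle dimension as a submanifold of $M^*$ equals $p$. The same argument applied to the symmetric half-disc gives handle dimension $p$ for $Q_{r+1}=D^p_{[-1,-\epsilon]}\subset M^*$, while the $Q_i$ for $i\le r$ retain handle dimension $q_i$ because they avoid $T$. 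The hypothesis $d-p\ge 3$ covers both half-disc conditions and $d-q_i\ge 3$ covers the rest.

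Applying \cref{thm:multipl-disjunction} to this $(r+1)$-cube with its $r+1$ submanifolds therefore yields that it is
\[
\bigl(d-p-2+\Sigma+(d-p-2)\bigr)\text{-cartesian}=\bigl(2(d-p-2)+\Sigma\bigr)\text{-cartesian},
\]
which is exactly the claimed bound. The main (minor) obstacle is the bookkeeping: recognising that the delooping trick converts the stabilisation into one extra submanifold of handle dimension $p$ (not $p-1$, since the new boundary piece from $\partial T$ sits on $\partial M^*$), whose contribution $d-p-2$ to the enlarged $\Sigma$ is precisely what produces the coefficient $2$ in front of $(d-p-2)$ in the final answer.
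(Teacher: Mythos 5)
Your proposal is correct and follows the paper's own argument exactly: identify the scanning map $(r+1)$-cube with the inclusion of $r$-cubes $\CE(D^p_{[\epsilon,1]},(M\setminus T)_\bullet\setminus D^p_{[-1,-\epsilon]})\subset\CE(D^p_{[\epsilon,1]},(M\setminus T)_\bullet)$, recognise this as an instance of $\CE(P,M^*_\bullet)$ with one extra submanifold $Q_{r+1}=D^p_{[-1,-\epsilon]}$, and apply \cref{thm:multipl-disjunction}. Your accounting of the handle dimensions (both half-discs having handle dimension $p$ rather than $p-1$, because the cross-section $D^p_{\{\epsilon\}}$ lies on $\partial(M\setminus T)$) is the key bookkeeping step, which the paper leaves implicit, and it is right; the only tiny imprecision is the parenthetical ``shrinking $T$ if necessary'' to make $Q_{r+1}$ disjoint from the other $Q_i$ — that disjointness is automatic since $D^p_{[-1,-\epsilon]}\subset P$ and $P$ is already disjoint from the $Q_i$; shrinking $T$ is only needed to ensure $Q_i\subset M\setminus T$.
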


\begin{proof}
This $(r+1)$-cube may be rewritten as $\smash{\CE(D^p_{[\epsilon,1]},(M\backslash T)_\bullet)}$ with $Q_1,\ldots,Q_r \subset M\backslash T$ as before and $Q_{r+1}\coloneq D_{[-1,-\epsilon]}^p$, so it is 
$(d-p-2+\Sigma+(d-p-2))$-cartesian by \cref{thm:multipl-disjunction}.
\end{proof}

\subsection{Concordance maps and immersions}\label{sec:immersions}It will be useful to compare spaces of concordance embeddings to spaces of concordance maps and of concordance immersions. The space $\CF(P,M)$ of \emph{concordance maps} is the space of smooth maps $\varphi \colon P \times I \rightarrow M \times I$ that satisfies the following two conditions:
\begin{enumerate}
	\item\label{first} $\varphi^{-1}(M \times \{i\}) = P \times \{i\}$ for $i = 0,1$ and 
	\item \label{second}$\varphi$ agrees with the inclusion on a neighbourhood of $P \times \{0\} \cup (\partial M \cap P) \times I \subset M \times I$,
\end{enumerate} 
equipped with the smooth topology. The space of \emph{concordance immersions}  $\CI(P,M)\subset \CF(P,M)$ is the subspace of those maps that are immersions. Note that we have inclusions
\[
	\CE(P,M)\subset \CI(P,M)\subset \CF(P,M).
\]
As we shall explain now, the homotopy types of both $\CI(P,M)$ and $\CF(P,M)$ are significantly simpler than that of $\CE(P,M)$. We begin with $\CF(P,M)$:

\begin{lem}\label{lem:CF-contractible}$\CF(P,M)$ is contractible.\end{lem}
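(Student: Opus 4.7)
The plan is to construct an explicit deformation retraction of $\CF(P,M)$ onto the inclusion $\inc\colon P\times I\hookrightarrow M\times I$. For $\varphi\in\CF(P,M)$, write $\varphi_M\coloneq\pr_M\circ\varphi$ and $\varphi_I\coloneq\pr_I\circ\varphi$, as in \cref{sec:stabilisation-map}. The key structural observation is that the two defining conditions of $\CF(P,M)$ decouple: the preimage condition $\varphi^{-1}(M\times\{i\})=P\times\{i\}$ involves only $\varphi_I$ (it amounts to $\varphi_I^{-1}(i)=P\times\{i\}$), while the neighbourhood condition imposes independent requirements on $\varphi_M$ and $\varphi_I$. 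In particular $\varphi_M(p,0)=p$ for every $\varphi\in\CF(P,M)$, and on some neighbourhood of $P\times\{0\}\cup(\partial M\cap P)\times I$ one has $\varphi_M(p,t)=p$ and $\varphi_I(p,t)=t$.

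These observations motivate the combined straight-line homotopy
\[
	H_s(\varphi)(p,t)\coloneq\bigl(\varphi_M(p,(1-s)t),\,(1-s)\varphi_I(p,t)+s\,t\bigr),\qquad s\in[0,1],
\]
which interpolates between $H_0(\varphi)=\varphi$ and $H_1(\varphi)=\inc$. The $M$-component uses a time-rescaling (since $M$ carries no linear structure in which to average), whereas the $I$-component is a genuine convex combination in $[0,1]$.

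What remains is to verify that $H_s(\varphi)$ lies in $\CF(P,M)$ for every $s\in[0,1]$; smoothness in $(p,t)$ (and jointly in $s$) is immediate. On the given neighbourhood of $P\times\{0\}\cup(\partial M\cap P)\times I$ both components collapse to the inclusion, since $\varphi_M(p,(1-s)t)=p$ and $(1-s)t+st=t$ there. At $t=0$ the $M$-component is $\varphi_M(p,0)=p$. The only step that requires a brief argument is the preimage condition on the $I$-component: since $\varphi_I(p,t),t\in[0,1]$, the convex combination $(1-s)\varphi_I(p,t)+st$ equals $0$ (respectively $1$) if and only if both $\varphi_I(p,t)$ and $t$ do, which by the preimage condition for $\varphi$ itself forces $t=0$ (respectively $t=1$). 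I expect this small non-negativity argument to be the only substantive—though still elementary—step.
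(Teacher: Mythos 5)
Your proof is correct and takes essentially the same approach as the paper: the paper also deformation retracts via a convex combination in the $I$-coordinate and a time-rescaling in the $M$-coordinate, just performed in two consecutive steps (first straighten the $I$-component onto the subspace of level-preserving concordance maps, then contract), whereas you run both simultaneously. One small point worth making explicit: for the neighbourhood condition on the $M$-component you implicitly use that the neighbourhood of $P\times\{0\}\cup(\partial M\cap P)\times I$ on which $\varphi$ is the inclusion can be taken star-shaped under $t\mapsto (1-s)t$, which is harmless (shrink to a product-type neighbourhood), and which the paper's sequential version conveniently sidesteps by only rescaling $t$ after the $I$-component is already $t$.
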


\begin{proof}Given a concordance map $f\colon P\times I\rightarrow M\times I$, the family of concordance maps 
\[
	f_s(p,t)=\big((\pr_M\circ f)(p,t),(1-s)\cdot (\pr_I\circ f)(p,t)+s\cdot t\big) \qquad \text{for $s\in[0,1]$}
\]
defines a deformation retraction of $\CF(P,M)$ onto the subspace $\CF(P,M)_I\subset \CF(P,M)$ of concordance maps that are \emph{level-preserving}, i.e.\,commute with the projection onto $I$. The space $\CF(P,M)_I$ further deformation-retracts onto the basepoint by the family of paths  
\[
	f_s(p,t)=\big((\pr_M\circ f)(p,(1-s)\cdot t),t\big) \qquad \text{for $s \in [0,1]$}.\qedhere
\]\end{proof}

Turning to the space $\CI(P,M)$ of concordance immersions, we assume that the handle dimension $\partial M \cap P\subset P$ is less than $d$. Under this assumption, by Smale--Hirsch theory, differentiation gives an  equivalence
\[
	\CI(P,M)\xlra{\simeq} \CB(P,M),
\]
where $\CB(P,M)$ is the space of \emph{concordance bundle maps}, equipped with the compact-open topology. A concordance bundle map is a fibrewise injective vector bundle map $T(P\times I)\rightarrow T(M\times I)$ covering a concordance map $f$ such that for points $(x,t)$ near $P \times \{0\} \cup (\partial M \cap P) \times I \subset P \times I$ the linear map $T_{(x,t)}(P\times I)\to T_{(x,t)}(M\times I)$ is the inclusion, and such that at points $(x,1)$ the linear map $T_{(x,1)}(P\times I)\to  T_{f(x,1)}(M\times I)$ takes the subspace $TP\times 0$ into the subspace $TM\times 0$ and is positive in the $I$-direction. This can also be described as a space of sections:

\begin{lem}\label{lem:cimm-as-sections}If the handle dimension of $\partial M \cap P\subset P$ is less than $d$, then there are equivalences
\[
	\Sect_{P\cap \partial M}(\Omega S^{d}\times_{\oO(d)} \Fr(M)|_P \to P) \overset{\simeq}\lra \CB(P,M) \overset{\simeq}\longleftarrow \CI(P,M).
\]
Here $\Fr(M)|_P$ is the restriction to $P$ of the frame bundle of $TM$, $\oO(d)$ acts on $S^d$ via the one-point compactification of $\bfR^d$, and $\Sect_{P\cap \partial M}(-)$ stands for the space of sections of the indicated bundle that agree with the standard section in a neighbourhood of $P\cap \partial M$.
\end{lem}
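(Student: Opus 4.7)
The plan is to establish the two equivalences separately. For the right-hand equivalence $\CI(P,M) \simeq \CB(P,M)$, the map is the derivative $f \mapsto df$, and its being an equivalence is an instance of a parametrised Smale--Hirsch immersion theorem for manifolds with corners. The relevant positive codimension condition is that the handle dimension of the source $P \times I$, relative to the boundary piece $P \times \{0\} \cup (\partial M \cap P) \times I$ on which the bundle map is prescribed, is strictly less than $\dim(M \times I) = d+1$. Since attaching an external collar in the $I$-direction contributes no handles, this relative handle dimension equals the handle dimension of $\partial M \cap P \subset P$, which is less than $d$ by hypothesis.

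For the section-space equivalence the plan is to exploit the contractibility of $\CF(P,M)$ from \cref{lem:CF-contractible}. Forgetting the bundle data gives a map $\CB(P,M) \to \CF(P,M)$, which I would show is a quasi-fibration by a local-triviality argument: given a path of concordance maps $f_s$ and a reference connection on $TM$, one transports a bundle map over $f_0$ to one over $f_s$ via parallel transport of vectors along the paths $s \mapsto f_s(x,t)$ in $M$, compatibly with the linear structure on bundle maps. Since the base is contractible, $\CB(P,M)$ is equivalent to the fibre over the basepoint $\mathrm{inc}\colon P \times I \hookrightarrow M \times I$, namely the space of fibrewise injective bundle maps $T(P \times I) \to T(M \times I)|_{P \times I}$ satisfying the stated boundary conditions.

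Choosing a frame of $TM|_P$, I would then identify this space of bundle maps with the space of sections over $P \times I$ of a Stiefel bundle with fibre the variety $V_{p+1}(\bfR^{d+1})$ of $(p+1)$-frames in $\bfR^{d+1}$, required to equal the standard frame near $P \times \{0\} \cup (\partial M \cap P) \times I$ and to lie, at $P \times \{1\}$, in the subvariety $V_p(\bfR^d) \subset V_{p+1}(\bfR^{d+1})$ of frames preserving the $\bfR$-summand positively. Reinterpreting a section over $P \times I$ as a section over $P$ whose value at $x$ is a vertical path, the fibre at $x$ becomes the space of paths in $V_{p+1}(\bfR^{d+1})$ from the standard frame to $V_p(\bfR^d)$. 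Since $V_p(\bfR^d) \hookrightarrow V_{p+1}(\bfR^{d+1})$ is the fibre inclusion of the Stiefel fibration $V_p(\bfR^d) \to V_{p+1}(\bfR^{d+1}) \to S^d$ recording the first vector, this path space is the homotopy fibre of that inclusion, namely $\Omega S^d$ based at the image of the standard frame.

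The main obstacle will be checking that these pointwise identifications are $\oO(d)$-equivariant with respect to change of frame, so that they globalise over $P$ to the bundle $\Omega S^d \times_{\oO(d)} \Fr(M)|_P$ with the prescribed boundary condition along $P \cap \partial M$. This reduces to noting that the Stiefel fibration $V_p(\bfR^d) \to V_{p+1}(\bfR^{d+1}) \to S^d$ is $\oO(d)$-equivariant for the standard $\oO(d)$-action on $\bfR^d$ and its one-point compactification $S^d$, which is immediate from the construction. The resulting equivalence between $\Sect_{P\cap\partial M}(\Omega S^d \times_{\oO(d)} \Fr(M)|_P \to P)$ and $\CB(P,M)$ is then the one appearing in the statement, obtained as a zig-zag through the basepoint-fibre model of $\CB(P,M)$.
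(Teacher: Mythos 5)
Your proposal is correct and takes essentially the same approach as the paper: Smale--Hirsch for $\CI(P,M)\simeq\CB(P,M)$, fibring $\CB(P,M)$ over the contractible $\CF(P,M)$ to reduce to the fibre $\CB_\inc(P,M)$ over the inclusion, and identifying the fibre of that section space over $x\in P$ as the loop space of the base of a Stiefel-type fibration. (The paper avoids the possibly unavailable global frame of $TM|_P$ by working fibrewise with the injection spaces $\Inj(T_xP\oplus\bfR,T_xM\oplus\bfR)$ and the restriction map to $\Inj(\bfR,T_xM\oplus\bfR)\cong S^{T_xM}$; your closing $\oO(d)$-equivariance check is the device that accomplishes the same thing, and note that in your Stiefel model the integer should be $\dim P$ rather than the handle dimension $p$.)
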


\begin{proof}
We have already explained the right-hand equivalence. For the other, note that the forgetful map $\CB(P,M)\ra \CF(P,M)$ is a fibration whose base space is contractible by \cref{lem:CF-contractible}, so it suffices to show that the indicated section space is equivalent to the fibre $\CB_{\inc}(P,M)$ over the basepoint $\inc\in\CF(P,M)$. 

A bundle map that covers the inclusion is given by linear injections $T_{(x,t)}(P\times I)\ra T_{(x,t)}(M\times I)$ for $(x,t)\in P\times I$ satisfying certain boundary conditions. Fixing $x$, varying $t$, and using the standard trivialisation of $TI$, this becomes a path in the space $\Inj(T_xP\oplus \bfR,T_xM\oplus \bfR)$ of linear injections, starting at the inclusion and ending somewhere in $\Inj(T_xP,T_xM)$ (viewed as a subspace of $\Inj(T_xP\oplus \bfR,T_xM\oplus \bfR)$ via $(-)\oplus \id_\bfR$). From this we see that $\CB_{\inc}(P,M)$ is the space of sections, trivial near $P\cap \partial M$, of a bundle on $P$ whose fibre over $x\in P$ is $F_x\coloneq\hofib_{\inc}(\Inj(T_xP,T_xM)\ra\Inj(T_xP\oplus \bfR,T_xM\oplus \bfR))$. The fibre sequence
\vspace{-0.1cm}
\[
	\Inj(T_xP,T_xM)\lra \Inj(T_xP\oplus\bfR ,T_xM\oplus\bfR)\xlra{\res} \Inj(\bfR,T_xM\oplus \bfR)\cong S^{T_xM}
\]
gives an equivalence $F_x\simeq \Omega S^{T_xM}$ to the loop space on the one-point compactification of $T_xM$. This depends continuously on $x$, so $\CB_{\inc}(P,M)$ is equivalent to the space of sections, trivial near $P\cap\partial M$, of a bundle whose fibre is $\Omega S^{T_x M}$. This bundle is $\Omega S^d\times_{\oO(d)} \Fr(M)|_P \to P$.
\end{proof}

Note that the equivalences in the proof of \cref{lem:cimm-as-sections} are natural in codimension $0$ embeddings $e\colon M\hookrightarrow M'$ with $P\cap \partial M'=P\cap \partial e(M)$. In particular, we can conclude:

\begin{lem}\label{lem:cimm-independent-of-target}Assume that the handle dimension of $\partial M \cap P\subset P$ is less than $d$. For an open neighbourhood $U\subset M$ of $P$, the inclusion $\CI(P,U) \subset\CI(P,M)$ is an equivalence. 
\end{lem}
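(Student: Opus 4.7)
The plan is to exploit the naturality of \cref{lem:cimm-as-sections} with respect to codimension $0$ embeddings. Since $U\subset M$ is an open neighbourhood of $P$, regarding $U$ as a manifold with boundary $\partial U=\partial M\cap U$ (the open part of $U$ contributes no boundary), the inclusion $U\hookrightarrow M$ is a codimension $0$ embedding with $P\cap\partial U=P\cap\partial M$, so the naturality clause applies.

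Applying \cref{lem:cimm-as-sections} to both $\CI(P,U)$ and $\CI(P,M)$ and using naturality, I would obtain a commutative diagram
\[
\begin{tikzcd}[column sep=large]
\CI(P,U) \rar[hook] \dar["\simeq"'] & \CI(P,M) \dar["\simeq"] \\
\Sect_{P\cap\partial U}\bigl(\Omega S^d\times_{\oO(d)}\Fr(U)|_P\to P\bigr) \rar & \Sect_{P\cap\partial M}\bigl(\Omega S^d\times_{\oO(d)}\Fr(M)|_P\to P\bigr)
\end{tikzcd}
\]
where the vertical maps are the equivalences furnished by \cref{lem:cimm-as-sections}.

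Now I would observe that, since $U\subset M$ is open and contains $P$, one has $TU|_P=TM|_P$ canonically, hence $\Fr(U)|_P=\Fr(M)|_P$ as principal $\oO(d)$-bundles on $P$. Combined with the identification $P\cap\partial U=P\cap\partial M$, the bottom arrow in the above square is literally the identity map of section spaces. It follows that the top arrow is an equivalence, which is what was to be shown.

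There is no real obstacle here: the content of the lemma is entirely absorbed into \cref{lem:cimm-as-sections} and the remark about its naturality. The only point that requires mild care is the boundary bookkeeping, i.e.\ ensuring that ``open neighbourhood'' is interpreted so that $P\cap \partial U=P\cap \partial M$, which is automatic from $P\subset U$ together with the convention that the boundary of the open submanifold $U$ comes entirely from $\partial M$.
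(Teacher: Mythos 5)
Your proposal is correct and follows essentially the same route as the paper: the paper also derives this lemma directly from the naturality of the equivalence in \cref{lem:cimm-as-sections} with respect to codimension $0$ embeddings (which it records explicitly in the sentence preceding the statement), combined with the observation that $\Fr(U)|_P = \Fr(M)|_P$ and $P\cap\partial U = P\cap\partial M$. You have merely spelled out the diagram that the paper leaves implicit.
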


\begin{rem}\label{rem:immersion-point}\ 
\begin{enumerate}
\item Note that the fibre sequence at the end of the proof of \cref{lem:cimm-as-sections} in particular shows that the map $F_x\ra \CB_\inc(\{x\},M)$ is an equivalence.
\item \label{enum:immersion-point-ii}For $P = \ast\in\interior(M)$, \cref{lem:cimm-as-sections} gives a $\Diff_\ast(M)$-equivariant equivalence $\CI(\ast,M)\simeq \Omega S^{T_\ast M}$ to the loop space on the one-point compactification of $T_\ast M$. Applying this for $M=\bfR^d$, we see that the equivalence of \cref{lem:cimm-as-sections} can also be written in the form $\CI(P,M)\simeq \Sect_{P\cap \partial M}(\CI(\ast,\bfR^d)\times_{\oO(d)} \Fr(M)|_P \to P)$.
\end{enumerate}
\end{rem}

\subsubsection{The stabilisation and scanning maps for concordance maps and immersions}
The construction of the stabilisation and scanning map in Sections~\ref{sec:stabilisation-map} and \ref{sec:delooping} extend to concordance maps, concordance immersions, and concordance bundle maps, so there are commutative diagrams 
\[\begin{tikzcd}[ar symbol/.style = {draw=none,"\textstyle#1" description,sloped},
  subset/.style = {ar symbol={\subset}}, row sep=0.2cm]
\CE(P,M)\arrow[d,subset]\rar{\sigma}&\CE(P\times J,M\times J)\arrow[d,subset]&&\CE(D^p,M)\rar\arrow[d,subset]\arrow[r,"\tau"]&\Omega\CE(D^{p-1},M)\arrow[d,subset]\\
\CI(P,M)\dar\rar{\sigma}&\CI(P\times J,M\times J)\arrow[d]&&\CI(D^p,M)\rar\dar\arrow[r,"\tau"]&\Omega\CI(D^{p-1},M)\arrow[d]\\
\CB(P,M)\dar\rar{\sigma}&\CB(P\times J,M\times J)\dar&&\CB(D^p,M)\rar\dar\arrow[r,"\tau"]&\Omega\CB(D^{p-1},M)\dar\\
\CF(P,M)\rar{\sigma}&\CF(P\times J,M\times J)&&\CF(D^p,M)\rar\arrow[r,"\tau"]&\Omega\CF(D^{p-1},M).
\end{tikzcd}.\]
In all cases except for the stabilisation map for concordance bundle maps, the construction is exactly the same as for concordance embeddings. In the remaining case, it is helpful to note that the concordance $\sigma(e)\in\CE(P\times J, M\times J)$ for $e\in\CE(P,M)$ can be described as the unique continuous map $P\times J\times I\ra M\times J\times I$ that agrees with the inclusion on $P\times D_2$ and with the composition $(\id_M\times \Lambda^{-1})\circ (e\times \id_{[0,\pi]})\circ (\id_P\times \Lambda)$ on $M\times (D_1\backslash{\{(0,1)\}})$, using that $\Lambda$ restricts to a diffeomorphism $D_1\backslash{\{(0,1)\}}\cong [0,1)\times [0,\pi]$. Said like this, the definition makes equal sense for concordance bundle maps.

The sources and targets of scanning and stabilisation for concordance maps are contractible by \cref{lem:CF-contractible}, so these maps are equivalences. The scanning map for concordance immersions is also an equivalence, though for a different reason:

\begin{lem}\label{lem:cimm-scanning-equivalence}For $p<d$, the map $\tau\colon \CI(D^p,M)\ra\Omega\CI(D^{p-1},M)$ is an equivalence.
\end{lem}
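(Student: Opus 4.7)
The plan is to reduce to a statement about section spaces via \cref{lem:cimm-as-sections}, and then establish two things: (i) the square \eqref{equ:delooping-square} with $\CE$ replaced by $\CI$ throughout is a homotopy pullback, and (ii) the two corners $\CI(D^p_{[0,1]},M)$ and $\CI(D^p_{[-1,0]},M)$ are contractible. Combined, these identify the homotopy pullback of the remaining three corners with $\Omega\CI(D^{p-1},M)$, so the scanning map $\tau$ is an equivalence. The point of contrast with the concordance-embedding case is that the analogous square for $\CE$ need not be homotopy cartesian at all (measuring its failure is, after all, the whole point of the paper); for $\CI$ the section-space description forces it to be.

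For (i), the assumption $p<d$ ensures that \cref{lem:cimm-as-sections} applies to all four corners and identifies each $\CI(D^p_K,M)$ for $K\in\{[-1,1],[-1,0],[0,1],\{0\}\}$ with the space of sections of the bundle $\Omega S^d\times_{\oO(d)}\Fr(M)|_{D^p_K}$ that agree with the standard section on a neighbourhood of $\partial M\cap D^p_K$. The sphere $\partial D^p$ decomposes as the union of the two closed hemispheres $\partial D^p\cap D^p_{[\pm1,0]}$, meeting along the equator $\partial D^p_{\{0\}}$, and correspondingly a section over $D^p$ trivial near $\partial D^p$ is the same data as a compatible pair of sections---one over each half---each trivial near the respective hemisphere. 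The resulting square of section spaces is thus a strict pullback, and since the restrictions to $\CI(D^p_{\{0\}},M)$ are Hurewicz fibrations (the inclusions $D^p_{\{0\}}\hookrightarrow D^p_{[\pm1,0]}$ being cofibrations), it is a homotopy pullback.

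For (ii), the bundle $\Omega S^d\times_{\oO(d)}\Fr(M)|_{D^p_{[0,1]}}$ is trivialisable because $D^p_{[0,1]}$ is contractible, so \cref{lem:cimm-as-sections} identifies $\CI(D^p_{[0,1]},M)$ with $\Map_\ast(D^p_{[0,1]}/A,\Omega S^d)$, where $A:=\partial D^p\cap D^p_{[0,1]}$ is the upper closed hemisphere. Since $A\hookrightarrow D^p_{[0,1]}$ is a cofibration between contractible spaces, the quotient $D^p_{[0,1]}/A$ is contractible, and the mapping space collapses to a point; the argument for $\CI(D^p_{[-1,0]},M)$ is symmetric. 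Combining (i) and (ii), the homotopy pullback reduces to $\Omega\CI(D^{p-1},M)$ based at the standard inclusion, and the induced map from $\CI(D^p,M)$---which is $\tau$ by construction---is an equivalence. The only subtle bookkeeping is matching the hemispherical boundary conditions across the cover $D^p=D^p_{[-1,0]}\cup D^p_{[0,1]}$, but this is automatic at the section-space level and is the step where the argument genuinely uses that we are in the immersion (rather than embedding) setting.
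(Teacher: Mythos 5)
Your proof is correct and takes essentially the same route as the paper: both reduce via \cref{lem:cimm-as-sections} (and the trivialisation over $D^p$) to a square of section/mapping spaces and show it is homotopy cartesian. The paper establishes cartesianness by realising the square as the square of homotopy fibres of a map between two squares, each obtained by applying $\Map(-,\Omega S^d)$ to a homotopy cocartesian square, whereas you argue by a strict pullback plus a Hurewicz-fibration criterion; and you additionally spell out the contractibility of the two off-diagonal corners, a fact the paper leaves implicit in the construction of the scanning zig-zag from \cref{sec:delooping}.
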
 

\begin{proof}
By the construction of the scanning map in \cref{sec:delooping}, it suffices to show that the square induced by restriction maps
\[
\begin{tikzcd}
\CI(D^p,M)\rar\dar&\CI(D^p_{[0,1]},M)\dar\\
\CI(D^p_{[-1,0]},M)\rar&\CI(D^p_{\{0\}},M)
\end{tikzcd}
\]
 is homotopy cartesian. Via the natural equivalences of \cref{lem:cimm-as-sections} and the standard trivialisation of $TD^d$, this translates to the claim that the square of mapping spaces
\[
\begin{tikzcd}
\Map_\partial(D^p,\Omega S^d)\rar\dar&\Map_{D^p_{[0,1]}\cap\partial D^p} (D^p_{[0,1]},\Omega S^d)\dar\\
\Map_{D^p_{[-1,0]}\cap\partial D^p} (D^p_{[-1,0]},\Omega S^d)\rar&\Map_\partial(D^p_{\{0\}},\Omega S^{d})
\end{tikzcd}
\]
induced by restriction is homotopy cartesian. This square agrees with the induced square on homotopy fibres of the map of squares from
\[
\begin{tikzcd}[column sep=0.1cm]
\Map(D^p,\Omega S^d)\rar\dar &\Map(D^p_{[0,1]},\Omega S^d)\dar\\
\Map(D^p_{[-1,0]},\Omega S^d)\rar &\Map(D^{p}_{\{0\}},\Omega S^d)
\end{tikzcd}\text{\ \ to\ \ }
\begin{tikzcd}[column sep=-0.3cm]
\Map(\partial D^p,\Omega S^d)\rar \dar&\Map(D^p_{[0,1]}\cap \partial D^p,\Omega S^d)\dar\\
\Map(D^p_{[-1,0]}\cap \partial D^p,\Omega S^d)\rar & \Map(\partial D^{p}_{\{0\}},\Omega S^d)
\end{tikzcd}
\]
induced by restriction. This implies the claim, since both of these squares are homotopy cartesian, being obtained by applying $\Map(-,\Omega S^d)$ to a homotopy cocartesian square.
\end{proof}

The stabilisation map for concordance immersions is not an equivalence in general, but we have the following connectivity estimate:

\begin{lem}\label{lem:cimm-freudenthal}If the handle dimension of $\partial M \cap P\subset P$ is less than $d$, then the stabilisation map $\CI(P,M) \to \CI(P \times J,M \times J)$ is $(2d-p-2)$-connected.\end{lem}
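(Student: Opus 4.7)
The plan is to translate both sides of the stabilisation map into section spaces via \cref{lem:cimm-as-sections} and then reduce the statement to the Freudenthal suspension theorem by an obstruction-theoretic argument.

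Applying \cref{lem:cimm-as-sections} directly yields
\[
\CI(P,M)\simeq \Sect_{P\cap\partial M}\bigl(\Omega S^{d}\times_{\oO(d)} \Fr(M)|_P \to P\bigr).
\]
For the target, \cref{lem:cimm-as-sections} applied to $(P\times J, M\times J)$ combined with the fact that $\Fr(M\times J)|_{P\times J}$ is pulled back from $\Fr(TM\oplus\bfR)|_P\to P$ (using the standard trivialisation of $TJ$) lets one rewrite the section space over $P\times J$ as a section space over $P$ whose fibrewise value is $\Map_{\{-1,1\}}(J,\Omega S^{d+1})\simeq \Omega^2 S^{d+1}$; here we have used that sections are fixed along $P\times\{-1,1\}$. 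The outcome is
\[
\CI(P\times J, M\times J)\simeq \Sect_{P\cap\partial M}\bigl(\Omega^2 S^{d+1}\times_{\oO(d)}\Fr(M)|_P \to P\bigr),
\]
where $\oO(d)\subset\oO(d+1)$ acts on $S^{d+1}$ via the standard inclusion.

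Under these identifications, the stabilisation map is induced by an $\oO(d)$-equivariant fibrewise map $\Omega S^{d}\to \Omega^2 S^{d+1}$. To identify this map up to homotopy, naturality and \cref{lem:cimm-independent-of-target} reduce to the case $P=\ast\subset\interior(M)$ with $M=\bfR^d$; an inspection of the formula \eqref{equ:stab-map-formula} for $\sigma$ together with the derivative equivalence of \cref{lem:cimm-as-sections} then identifies it with the looping of the Freudenthal suspension $S^d\to \Omega S^{d+1}$. The Freudenthal suspension theorem says that $S^d\to\Omega S^{d+1}$ is $(2d-1)$-connected, so $\Omega S^d\to \Omega^2 S^{d+1}$ is $(2d-2)$-connected.

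To finish, since the inclusion $P\cap \partial M\subset P$ has handle dimension at most $p$, the pair $(P,P\cap \partial M)$ has relative CW dimension at most $p$. Standard obstruction theory implies that a fibrewise $k$-connected map of fibrations over such a pair induces a $(k-p)$-connected map on the spaces of sections fixed on the subspace; applied with $k=2d-2$ this yields that the stabilisation map is $(2d-p-2)$-connected, as claimed. The main obstacle will be verifying the fibrewise identification of $\sigma$ with the looped Freudenthal map: this requires unpacking both the definition of $\sigma$ and the derivative equivalence in the proof of \cref{lem:cimm-as-sections}. Once that local model is established, the conclusion is a clean combination of Freudenthal and obstruction theory.
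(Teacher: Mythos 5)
Your overall strategy matches the paper's: reduce via \cref{lem:cimm-as-sections} to a fibrewise map of section spaces over $P$, identify the fibrewise map in a single fibre with (a loop of) the Freudenthal suspension, and then conclude by obstruction theory against the relative handle/CW dimension $p$ of $(P,\,P\cap\partial M)$. The reduction to section spaces, the identification of the target fibre with $\Map_{\partial}(J,\Omega S^{d+1})\simeq\Omega^2 S^{d+1}$, and the final obstruction-theory step are all in line with the paper.

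The genuine gap is the step you flag yourself as ``the main obstacle'': the claim that ``an inspection of the formula \eqref{equ:stab-map-formula} for $\sigma$ together with the derivative equivalence \ldots\ identifies it with the looping of the Freudenthal suspension $S^d\to\Omega S^{d+1}$.'' This is not an inspection --- the formula for $\sigma$ involves the polar-coordinate rotation $\Lambda$ and is not in any visible sense the loop-suspension map; some honest homotopy is required. The paper supplies it by a nontrivial construction: after using the trivialisation of $TJ$ to rewrite $\CB_{\inc}(\ast\times J,M\times J)$ as $\Map_\partial(J,\CB_{\inc}(\ast\times\{0\},M\times J))$, it \emph{enlarges} this path space to paths $\Map^{\pm}(J,-)$ that start in the contractible subspace $\CB^+_{\inc}$ (bundle maps pointing nonnegatively in the $J$-direction) and end in the contractible subspace $\CB^-_{\inc}$; it then checks that $\sigma'(f)_s$ actually lands in $\CB^+_{\inc}$ for $s\le 0$ and in $\CB^-_{\inc}$ for $s\ge 0$, so that the explicit rescaling $(u,f)\mapsto\big(s\mapsto\sigma'(f)_{(1-u)s}\big)$ is a homotopy, valued in $\Map^{\pm}$, from $\sigma'$ to the constant-path map. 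Only after this homotopy does one see that $\sigma$ agrees with the canonical map from $\CB_{\inc}(\ast,M)$ to the homotopy pullback of $\CB^\pm_{\inc}\to\CB_{\inc}(\ast\times\{0\},M\times J)$, which is then identified with looping the hemisphere pushout square $S^d\to S^{d+1}_\pm\to S^{d+1}$ and handled by Freudenthal. Without this $\Map^\pm$ enlargement and the verification of where $\sigma'(f)_s$ lands, the homotopy to the Freudenthal map does not exist at the level of the ordinary based loop space, because $\sigma'(f)$ is not a based loop at each stage of the rescaling. So you have the right architecture, but the step you deferred is exactly where the proof lives, and it is not a routine verification.
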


\begin{proof}Arguing as in the proof of \cref{lem:cimm-as-sections}, one sees that the stabilisation map agrees up to equivalence with a map between two section spaces relative to $\partial M\cap P$ of bundles over $P$ whose induced map between the fibres over $\ast\in P$ is the stabilisation map $\CB_{\inc}(\ast,M)\ra \CB_{\inc}(\ast\times J,M\times J)$ between the spaces of concordance bundle maps covering the inclusion. By obstruction theory, it thus suffices to show that this map on fibres is $(2d-2)$-connected. 

To do so, we first replace $\CB_{\inc}(\ast\times J,M\times J)$ by an equivalent space in two steps. Firstly, by the proof of \cref{lem:cimm-as-sections}, the space $\CB_{\inc}(\ast\times J,M\times J)$ is a section space of a bundle over $J$ associated to the frame bundle of $J$, so making use of the standard trivialisation of $TJ$, the space $\CB_{\inc}(\ast\times J,M\times J)$ is homeomorphic to $\Map_\partial(J,F_0)$ where $F_0$ is the fibre of the bundle over $0 \in J$. The latter admits a canonical equivalence to $\CB_{\inc}(\ast\times \{0\},M\times J)$ (see the first part of \cref{rem:immersion-point}), so we have an equivalence $\CB_{\inc}(\ast\times J,M\times J)\ra \Map_{\partial}(J,\CB_{\inc}(\ast\times \{0\},M\times J))$. Secondly, we replace $\Map_\partial(J,\CB_{\inc}(\ast\times\{0\},M\times J))$ by the equivalent space $\Map^{\pm}(J,\CB_{\inc}(\ast\times\{0\},M\times J))$ of those paths $[-1,1]=J\ra \CB_{\inc}(\ast,M\times J)$ that start somewhere in the subspace $\CB^+_{\inc}(\ast\times\{0\},M\times J)\subset \CB_{\inc}(\ast\times\{0\},M\times J)$ of concordance bundle maps $TI\ra T(M\times J\times I)$ covering the inclusion that land in the subspace of $T(M\times J\times I)$ whose tangent vector of the $J$-factor is nonnegative, and that end somewhere in the subspace $\CB^-_{\inc}(\ast\times\{0\},M\times J)\subset \CB_{\inc}(\ast,M\times J)$ which is defined similarly by replacing ``nonnegative'' with ``nonpositive''. Note that the intersection of these two subspaces agrees with $\CB_{\inc}(\ast,M
)\subset \CB_{\inc}(\ast\times\{0\},M\times J)$ which contains the basepoint $\inc\in \CB_{\inc}(\ast\times\{0\},M\times J)$. We thus have an inclusion map $\Map_\partial(J,\CB_{\inc}(\ast\times\{0\},M\times J))\subset \Map^{\pm}(J,\CB_{\inc}(\ast\times\{0\},M\times J))$, which is an equivalence since $\CB^+_{\inc}(\ast\times\{0\},M\times J)$ and $\CB^-_{\inc}(\ast\times\{0\},M\times J)$ are both contractible. It thus suffices to show that the composition
\vspace{-0.1cm}\[\hspace{-0.15cm}\CB_{\inc}(\ast,M)\xra{\sigma} \CB_{\inc}(\ast\times J,M\times J)\xra{\simeq} \Map_\partial(J,\CB_{\inc}(\ast\times\{0\},M\times J))\subset \Map^{\pm}(J,\CB_{\inc}(\ast\times\{0\},M\times J))\] is $(2d-2)$-connected. We denote this composition by $\sigma'$. Tracing through the definitions, one sees that the path $\sigma'(f)\colon J=[-1,1]\ra \CB_{\inc}(\ast\times\{0\},M\times J)$ for $f\in \CB_{\inc}(\ast,M)$ satisfies $\sigma'(f)_s\in \CB^+_{\inc}(\ast\times\{0\},M\times J)$ for $s\in[-1,0]$ and $\sigma'(f)_s\in \CB^-_{\inc}(\ast\times\{0\},M\times J)$ for $s\in[0,1]$, so we can define a homotopy \[[0,1]\times \CB_{\inc}(\ast,M)\ra \Map^{\pm}(J,\CB_{\inc}(\ast\times\{0\},M\times J))\] by sending $(u,f)$ to the path $[-1,1]\ni s\mapsto \rho(f)_{(1-u)s}\in\CB_{\inc}(\ast\times\{0\},M\times J)$. This homotopy starts at $\sigma'$ and ends at the map that sends $f$ to the constant path at $\sigma'(f)_0$. The latter agrees with the canonical map from the top-left corner of the commutative square
\[\begin{tikzcd}
\CB_{\inc}(\ast,M)\rar\dar&\CB_{\inc}^+(\ast\times\{0\},M\times J)\dar\\
\CB_{\inc}^-(\ast\times\{0\},M\times J)\rar&\CB_{\inc}(\ast\times\{0\},M\times J)
\end{tikzcd}
\]
to the homotopy pullback of the remaining entries, so we need to show that this square is $(2d-2)$-cartesian. Using the canonical trivialisation of $TI$, one sees that the space $\CB_{\inc}(\ast\times\{0\},M\times J)$ is the loop space $\Omega \Inj(\bfR,T_{(\ast,0)}(M\times J)\oplus \bfR)$ which is equivalent to $\Omega S(T_{(\ast,0)}(M\times J)\oplus \bfR)\simeq \Omega S^{d+1}$. This equivalence (or rather the proof of it) gives an equivalence of squares from the previous square to the square obtained by looping once the cocartesian square
\[\begin{tikzcd}
S^d\rar\dar&S^{d+1}_+\dar\\
S^{d+1}_-\rar&S^{d+1}
\end{tikzcd}
\]
where $S^{d+1}_{\pm}\subset S^{d+1}$ are the left and right hemispheres. By Freudenthal's suspension theorem, this  square is $(2d-1)$-cartesian, so looping it indeed results in a $(2d-2)$-cartesian square.
\end{proof} 

\section{The proof of the multirelative stability theorem}\label{sec:proof}
It is time to turn to the proof of the main result, \cref{thm:main-relative}. Most of the work goes into the case when $P$ is a point (see \cref{section:stability-point}). The case when $P$ is a $p$-disc $D^p$ with $\partial D^p=D^p\cap\partial M$ then follows by induction on $p$ using multirelative disjunction and the delooping trick (see \cref{sec:proof-disc}). The general case follows by induction over a handle decomposition (\cref{sec:proof-general}).

\subsection{The case of a point}\label{section:stability-point}
When $P$ is a point $\ast \in\interior(M)$, the asserted conclusion of \cref{thm:main-relative} is that the stability $(r+1)$-cube 
\begin{equation}\label{equ:stabilisation-point}
	\sCE(\ast,M_\bullet)=\big(\CE(\ast,M_\bullet)\xrightarrow{\sigma} \CE(\ast\times J,M _\bullet \times J)\big)
\end{equation}
is $(2d-5+\Sigma)$-cartesian if $d\ge3$ and $d-q_i\ge3$ for all $i$. The proof of this, carried out in this subsection, is organised as follows:
\begin{enumerate}[label=$\circled{\arabic*}$, ref={$\circled{\arabic*}$}]
	\item \label{step:scan} First we explain that it suffices to show that the composition 
	\[
		\rho=(\tau\circ\sigma)\colon \CE(\ast,M_\bullet)\lra \Omega\CE(\ast \times\{0\},M_\bullet\times J)
	\]
	is $(2d-4+\Sigma)$-cartesian, where $\tau$ is the scanning map from \cref{sec:delooping}. 
	\item \label{step:cimm} Next we reduce to proving the analogous statement for the analogous map
	\[
		\rho \colon \CbE(\ast,M_\bullet) \lra \Omega \CbE(\ast \times\{0\},M_\bullet \times J)
	\]	
	where $\CbE$ denotes the homotopy fibre of the forgetful map from concordance embeddings to concordance immersions.
	\item \label{step:partial} For the next step we consider the subspace $\CE^A(\ast ,M)\subset\CF(\ast ,M)$ consisting of those concordance maps that are embeddings on a fixed submanifold $A$ of $I$; similarly, we define $\CI^A(\ast ,M)$ and $\CbE^A(\ast ,M)$. Using these, we argue that it suffices to prove that the analogous map \[\rho\colon \CbE^A(\ast ,M_\bullet) \lra \Omega \CbE^A(\ast \times\{0\},M_\bullet \times J)\] is $(2d-1+\Sigma)$-cartesian when $A$ is the complement of three open intervals in $\interior(I)$. We argue further that for this purpose $\CbE^A(\ast ,-)$ may be replaced by $\CE^{\{t_1,t_2\}}(\ast ,-)$ where $\{t_1,t_2\} \subset \interior(I)$ is a two-element subset.
	\item \label{step:blakers-massey} We finish the proof by showing that the map of $r$-cubes
	\[
		\rho\colon \CE^{\{t_1,t_2\}}(\ast ,M_\bullet) \lra \Omega \CE^{\{t_1,t_2\}}(\ast \times\{0\},M_\bullet \times J)
	\]
	is $(2d-1+\Sigma)$-cartesian.
\end{enumerate}

\subsubsection*{Step \ref{step:scan}: Scanning}
The $1$-disc $D^1=\ast\times J\subset M\times J$ satisfies $D^1\cap \partial(M\times J)=\partial D^1$, so \cref{sec:delooping} gives a scanning map
\[
	\tau\colon \CE(\ast\times J,M  \times J)\lra \Omega \CE(\ast\times \{0\},M  \times J).
\]
In that section, this map was only defined up to contractible choices, but for what follows, it will be beneficial to fix a particular model. To do so, note that if $e$ is a concordance embedding of $\ast \times J$ into $M \times J$ then for each $s\in\interior(J)$, the restriction $e_s\coloneq e|_{\ast\times \{s\}\times I}$ is a concordance embedding of $\ast\times \{s\}$ into $M\times J$, which agrees with the inclusion for $s$ in a neighbourhood of $\partial J$. To adjust $e_s$ to make it a concordance embedding $\tau(e)_s$ of $\ast\times \{0\}$ into $M\times J$ (instead of $\ast\times \{s\}$), we fix once and for all a smooth family of diffeomorphisms $h_s \colon J\to J$ for $s\in \interior(J)$ that satisfies $h_0=\id_J$ and $h_s(s)=0$ for all $s$. Using this family of diffeomorphisms, we define
\[
	\tau(e)_s \coloneq \begin{cases} (\id_M \times h_s \times \id_I) \circ e_s \circ (\id_\ast \times h^{-1}_s \times \id_I) & \text{for $s \in \interior(J)$,} \\
	\inc_{\ast \times \{0\} \times I} & \text{for $s \in \partial J$.}\end{cases}
\]
The resulting map $J \ni s\mapsto \tau(e)_s \in \CE(\ast\times \{0\},M\times J)$ defines a loop $\tau(e) \in \Omega \CE(\ast\times \{0\},M\times J)=\Map_\partial(J,\CE(\ast\times \{0\},M\times J))$. This construction depends continuously on $e$, so it defines a map $\tau$ as desired.

\begin{lem}\label{lem:same-scanning}This map is homotopic to the map considered in \cref{sec:delooping} for $p=1$.
\end{lem}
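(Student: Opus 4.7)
The plan is to route the comparison through the homotopy pullback $\widetilde{\Omega}X$ of the square~\eqref{equ:delooping-square} (with $p=1$ and ambient manifold $M\times J$), where $X\coloneqq \CE(\ast\times\{0\},M\times J)$. By the construction in \cref{sec:delooping}, the scanning map $\tau_{\mathrm{del}}$ of that section is characterised up to homotopy as any map $\CE(\ast\times J,M\times J)\to\Omega X$ whose composition with the equivalence $\iota\colon\Omega X\xlra{\simeq}\widetilde{\Omega}X$ is homotopic to the restriction map
\[
R\colon \CE(\ast\times J,M\times J)\lra \widetilde{\Omega}X,\qquad e\longmapsto\big(e|_{\ast\times[-1,0]\times I},\ e|_{\ast\times[0,1]\times I},\ \mathrm{const}_{e|_{\ast\times\{0\}\times I}}\big).
\]
Since $\iota$ is an equivalence, it therefore suffices to show that $\iota\circ\tau$ (for the explicit $\tau$ defined just above) is homotopic to $R$ as maps with target $\widetilde{\Omega}X$.

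I would construct such a homotopy by interpolating between the two triples using the family $h_s$. For $u\in[0,1]$, define $f_u(e)\in\CE(\ast\times[-1,0],M\times J)$ so that $f_0(e)=\inc$, $f_1(e)=e|_{\ast\times[-1,0]\times I}$, and $f_u(e)|_{\ast\times\{0\}\times I}=\tau(e)_{-1+u}$ for every $u\in[0,1]$; define $g_u(e)\in\CE(\ast\times[0,1],M\times J)$ symmetrically with $g_u(e)|_{\ast\times\{0\}\times I}=\tau(e)_{1-u}$. Let $\gamma_u(e)$ be the restriction of the loop $\tau(e)$ to the subinterval $[-1+u,1-u]\subseteq J$, reparametrised as a path $[0,1]\to X$; by construction its endpoints are $f_u(e)|_{\ast\times\{0\}\times I}$ and $g_u(e)|_{\ast\times\{0\}\times I}$. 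At $u=0$ the triple $(f_u,g_u,\gamma_u)$ equals $(\inc,\inc,\tau(e))$, recovering $\iota\circ\tau(e)$; at $u=1$ it equals $(e|_{\ast\times[-1,0]\times I},e|_{\ast\times[0,1]\times I},\mathrm{const}_{e|_{\ast\times\{0\}\times I}})$, recovering $R(e)$. Continuity in $u$ and $e$ realises the interpolation as a path in $\widetilde{\Omega}X$.

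The main obstacle is constructing $f_u$ (and $g_u$) as genuine concordance embeddings whose central-slice restrictions trace out $\tau(e)$ on the nose. The natural recipe combines a reparametrisation of $e$ in the $J$-direction, controlled by the family $h_s$, with a smooth collar extension to the inclusion near $\ast\times\{-1\}\times I$: near the left boundary $s=-1$ one uses a bump function to smoothly transition from the inclusion to the $h_s$-conjugated restriction of $e$, with the transition region shrinking to the point $s=-1+u$ as $u$ varies. The space of such bump functions and collar extensions is contractible, so the resulting interpolation is canonical up to further homotopies of homotopies. Unwinding~\eqref{equ:stab-map-formula} together with the defining relations $h_0=\id_J$ and $h_s(s)=0$ confirms that the central-slice restriction equals $\tau(e)_{-1+u}$, so the resulting path in $\widetilde{\Omega}X$ gives the desired homotopy between $\iota\circ\tau$ and $R$, thereby identifying $\tau$ with $\tau_{\mathrm{del}}$ up to homotopy.
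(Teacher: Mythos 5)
Your overall strategy matches the paper's: reduce to producing a homotopy $\iota\circ\tau\simeq R$ in the homotopy pullback $\widetilde\Omega X$, and build it from paths in the corner spaces $\CE(\ast\times[-1,0],M\times J)$ and $\CE(\ast\times[0,1],M\times J)$ whose restrictions to the central slice trace out $\tau(e)$, driven by the family $h_s$. (The paper phrases this as ``choose deformation retractions of the corner spaces onto the inclusion and show the resulting loop is $\tau(e)$'', which is the same thing.) Where your write-up differs from the paper---and where it is insufficiently precise---is in the construction of $f_u$ (and $g_u$).

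Your recipe of ``$h_s$-reparametrisation plus a smooth collar extension/bump function near $s=-1$'' over-engineers a step that has a one-line answer and, more importantly, sidesteps the real subtlety. The paper's interpolating path is simply pure conjugation:
\[
	f_u(e)\ \coloneq\ (\id_M\times h_{-1+u}\times\id_I)\circ e|_{\ast\times[-1,0]\times I}\circ(\id_\ast\times h_{-1+u}^{-1}\times\id_I)\qquad\text{for }u\in(0,1],
\]
extended by $f_0(e)=\inc$. No bump function or collar extension is needed: since $h_{-1+u}^{-1}([-1,0])=[-1,-1+u]\subset[-1,0]$ and $e$ already agrees with the inclusion near $\ast\times\{-1\}\times I$, the conjugate automatically lies in $\CE(\ast\times[-1,0],M\times J)$, and $h_{-1+u}^{-1}(0)=-1+u$ ensures $f_u(e)|_{\ast\times\{0\}\times I}=\tau(e)_{-1+u}$ on the nose. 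What \emph{does} require an argument---and what your bump-function paragraph does not address---is continuity at $u=0$ (i.e.\ $s=-1$): one must observe that $h_s^{-1}$ maps $[-1,0]$ into an arbitrarily small neighbourhood of $-1$ as $s\to-1$, which forces the conjugate to converge to the inclusion in the smooth topology. Until that is spelled out, the claimed path of triples $(f_u,g_u,\gamma_u)$ is not visibly continuous at its starting point, and the homotopy is not established. Supplying the explicit conjugation formula and the continuity argument at $u=0$ closes the gap and recovers the paper's proof.
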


\begin{proof}
Going through the construction in \cref{sec:delooping}, we see it suffices to show that the map $\tau$ defined above can be obtained in the following way: fix deformation retractions of $\CE(\ast \times [-1,0],M \times J)$ and $\CE(\ast \times [0,1],M \times J)$ onto the inclusion, and map $e\in \CE(\ast \times J,M \times J)$ to the loop in $\CE(\ast \times \{0\},M \times J)$ based at the inclusion obtained by concatenating the two paths from the restriction of $e$ to a concordance embedding of $\ast\times \{0\}\subset \ast\times J$ to the inclusion, resulting from restricting the two deformation retractions to $\ast \times \{0\}$. 

To show that $\tau$ is of this form, we consider for $e \in \CE(\ast  \times J,M \times J)$ and $s\in [-1,0]$ the family of concordance embeddings of $\ast\times [-1,0]$ into $M\times J\times I$ given by
\[
	\begin{cases} (\id_M \times h_{s} \times \id_I) \circ e \circ (\id_\ast \times h^{-1}_{s} \times \id_I) & \text{for $s\in (-1,0]$,} \\
	\inc_{\ast\times [-1,0]\times I} & \text{for $s=-1$.}\end{cases}
\]
Varying $s\in [-1,0]$, this defines a deformation retraction of $\CE(\ast  \times [-1,0],M \times J)$ onto the inclusion (for continuity at $s=-1$, use that $h_s^{-1}$ maps $[-1,0]$ into an arbitrary small neighbourhood of $-1$ as $s$ approaches $-1$). When we restrict it to a family of concordance embeddings of $\ast \times \{0\}$ into $M \times J$, it visibly agrees with the family $\tau(e)_s$ for $s\in[-1,0]$. Replacing $[-1,0]$ by $[0,1]$ defines a similar deformation retraction of $\CE(\ast  \times [0,1],M \times J)$. Using these two deformation retractions in the above discussion, the claim follows.
\end{proof}

The construction of $\tau$ above is natural in inclusions of codimension $0$ submanifolds $M\subset M'$ with $P\cap \partial M=P\cap \partial M'$ by postcomposition and hence extends to a map of $r$-cubes
\[
	\tau\colon \CE(\ast\times J,M _\bullet \times J)\lra \Omega \CE(\ast \times \{0\},M _\bullet \times J).
\]
This agrees with the scanning map considered in \cref{sec:delooping} up to homotopy of $r$-cubes, so it is $(2d-4+\Sigma)$-cartesian as an $(r+1)$-cube by an application of \cref{lem:delooping}. Thus, to show that the stabilisation $(r+1)$-cube \eqref{equ:stabilisation-point} is $(2d-5+\Sigma)$-cartesian, it suffices by \cref{lem:cubelemma} \ref{cubelemma:v} to show that the composition
\begin{equation*}\label{tausigma}
	\CE(\ast,M_\bullet )\xlra{\sigma} \CE(\ast\times J,M _\bullet \times J)\xlra{\tau} \Omega \CE(\ast\times \{0\},M _\bullet \times J)
\end{equation*}
is $(2d-5+\Sigma)$-cartesian. In fact, we will find that this composition is $(2d-4+\Sigma)$-cartesian.

\subsubsection*{Step \ref{step:cimm}: Fibre over immersions}
Recall from \cref{sec:immersions} that $\sigma$ and $\tau$ extend to maps
\begin{equation}\label{eq:CF-maps}
	\CF(\ast,M) \xlra{\sigma} \CF(\ast\times J,M \times J) \xlra{\tau} \Omega \CF(\ast \times \{0\},M  \times J).
\end{equation}
between the spaces of concordance \emph{maps}, which in turn restrict to analogous maps between the spaces of concordance \emph{immersions}. For immersions, $\tau$ is an equivalence by \cref{lem:cimm-scanning-equivalence} and $\sigma$ is $(2d-2)$-connected by \cref{lem:cimm-freudenthal}. In the multirelative setting
\[
	\CI(\ast,M_\bullet) \xlra{\sigma} \CI(\ast\times J,M_\bullet \times J) \xlra{\tau} \Omega \CI(\ast \times \{0\},M_\bullet  \times J),
\]
each of the three $r$-cubes involved is constant as a result of \cref{lem:cimm-independent-of-target}, so for $r>0$ they are $\infty$-cartesian by \cref{cor:cube-of-cubes} \ref{enum:constant-cubes}  which implies that the composed map $(\tau\circ\sigma)$ of $r$-cubes is $\infty$-cartesian for $r>0$. For $r=0$ the composed map is $(2d-2)$-cartesian, as noted above, and therefore in both cases the composed map of $r$-cubes is thus $(2d-2+\Sigma)$-cartesian.

Because of this, to show that $(\tau\circ\sigma)$ is $(2d-4+\Sigma)$-cartesian for concordance embeddings, it suffices by \cref{lem:cubelemma} \ref{cubelemma:i} and \ref{cubelemma:iii} to show that the composition of maps of $r$-cubes
\begin{equation}\label{equ:tausigma-bar}
	\CbE(\ast,M_\bullet) \xlra{\sigma} \CbE(\ast\times J,M _\bullet \times J) \xlra{\tau} \Omega \CbE(\ast \times \{0\},M _\bullet \times J)
\end{equation}
is $(2d-4+\Sigma)$-cartesian as an $(r+1)$-cube. Here 
\[
	\CbE(\ast,M)\coloneq \hofib_{\inc}(\CE(\ast,M)\ra\CI(\ast,M)).
\]
In this reduction, we implicitly used that $\CI(\ast,M_\varnothing)$ is connected (as a result of \cref{rem:immersion-point} \ref{enum:immersion-point-ii}; remember that $d\ge3$) which ensures that it suffices to consider fibres over the inclusion.

\begin{figure}
	\begin{tikzpicture}[scale=2.5]
		\draw (-2.5,0) -- (-2.5,1);
		\draw [ultra thick,Mahogany] (-2.5,0) -- (-2.5,.15);
		\draw [ultra thick,Mahogany] (-2.5,.95) -- (-2.5,1);
		\draw [ultra thick,Mahogany] (-2.5,.4) -- (-2.5,.5);
		\draw [ultra thick,Mahogany] (-2.5,.6) -- (-2.5,.75);		
		\node at (-2.7,.5) {$I$};
		\node at (-2.5,-.2) {$\ast$};
		
		\draw [->] (-2.25,.5) -- (-1.25,.5);
		\node at (-1.75,.6) {$e$};
		
		\draw (-1,0) -- (1,0) -- (1,1) -- (-1,1) -- cycle;
		\node at (1.2,.5) {$I$};
		\node at (0,-.2) {$M$};
		\draw (0,0) -- (0,.15) to[out=90,in=-90] (-.5,.3) to[out=90,in=110] (.7,.4) to[out=-70,in=45] (.6,.1) to[out=-135,in=-45] (.3,.6) to[out=135,in=0] (0,.8) to[out=180,in=180] (.1,.4) to[out=0,in=-135] (.5,.8) to[out=45,in=-90] (.8,.95) -- (.8,1);
		\draw [ultra thick,Mahogany] (0,0) -- (0,.15);
		\draw [ultra thick,Mahogany] (.8,.95) -- (.8,1);
		\draw [ultra thick,Mahogany] (.7,.4) to[out=-70,in=45] (.6,.1);
		\draw [ultra thick,Mahogany] (0,.8) to[out=180,in=180] (.1,.4);
	\end{tikzpicture}
	\caption{An element $e$ of $\CE^A(*,M)$. The compact submanifold $A \subset I$ is indicated in thick red.}
	\label{fig:cea-example}
	\end{figure}
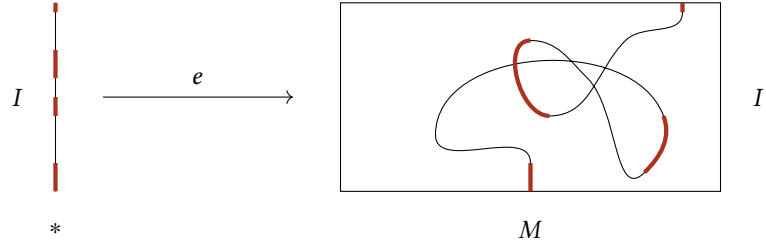

\subsubsection*{Step \ref{step:partial}: Partial embeddings and partial immersions} To prove that the composition \eqref{equ:tausigma-bar} is $(2d-4+\Sigma)$-cartesian, we consider further subspaces of the space $\CF(\ast,M)$ of concordance maps. Namely, for a compact submanifold $A\subset I$ (which may be $0$- or $1$-dimensional), we consider the subspaces
\begin{equation}\label{equ:partial-embeddings}
	\CE^A(\ast,M)\subset \CI^A(\ast,M)\subset \CF(\ast,M)
\end{equation}
consisting of those concordance maps $\ast\times I\ra M\times I$ whose restriction to $\ast\times A$ is an embedding (this defines $\CE^A(\ast,M)$) or an immersion (this defines $\CI^A(\ast,M)$); see \cref{fig:cea-example} for an example. Analogous to the definition of $\CbE(\ast,M)$, we write
\[
	\CbE^A(\ast,M)\coloneq\hofib_\inc(\CE^A(\ast,M)\ra \CI^A(\ast,M)).
\]

\begin{lem}\label{lem: tilt}The composition \eqref{eq:CF-maps} preserves the subspaces \eqref{equ:partial-embeddings} in that we have 
\[
	(\tau\circ\sigma)\big(\CE^A(\ast,M)\big)\subset \Omega \CE^A(\ast \times \{0\},M\times J)\quad\text{and}\quad(\tau\circ\sigma)\big(\CI^A(\ast,M)\big)\subset\Omega\CI^A(\ast \times \{0\},M\times J)
\]
for any compact submanifold $A\subset I$.
\end{lem}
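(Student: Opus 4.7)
My plan is to reduce the claim to a slice-by-slice geometric check of the formula for $\sigma(e)$. First I would observe that the diffeomorphisms $h_s$ of $J$ appearing in the definition of $\tau$ preserve both the embedding and the immersion conditions on a subset of $I$. Accordingly, it suffices to show that for each $e \in \CE^A(\ast, M)$ (resp.\,$\CI^A(\ast, M)$) and each $s \in J$, the slice $\sigma(e)|_{\ast \times \{s\} \times A}$ is an embedding (resp.\,immersion); the case $s \in \partial J$ is trivial since $\tau(\sigma(e))_s$ is by definition the inclusion.

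Next I would exploit the fact, visible from \eqref{equ:stab-map-formula}, that $\sigma(e)$ is the inclusion on $\ast \times D_2$ and, on $\ast \times (D_1 \setminus \{(0,1)\})$, sends $(\ast, \Lambda(r,\theta))$ to $(e_M(\ast,r), \Lambda(e_I(\ast,r), \theta))$. The $D_2$-portion of the slice is embedded by the inclusion; the $D_1$-image lies in $\ast \times D_1$ while the $D_2$-image lies in $\ast \times D_2$, and on the shared semicircle $D_1 \cap D_2$ (where $r = 0$) the two formulas agree because $e(\ast, 0) = (\ast, 0)$. This rules out collisions between the $D_1$- and $D_2$-images of the slice, so the analysis reduces entirely to the $D_1$-portion $\{t : (s, t) \in D_1\}$.

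The $D_1$-portion splits into two subcases depending on $s$. For $s = 0$ it is all of $I$, coinciding with the radial line $\theta = \pi/2$ parametrised by $r = t$; the formula collapses to $\sigma(e)(\ast, 0, t) = (e(\ast, t), 0)$, and the restriction to $A$ inherits the required property directly from the hypothesis $e \in \CE^A(\ast, M)$ (resp.\,$\CI^A(\ast, M)$). For $s \in \interior(J)\setminus\{0\}$ I would instead establish the stronger statement that $\sigma(e)|_{\ast\times\{s\}\times I}$ is itself an embedding, no injectivity hypothesis on $e$ required: differentiating the polar identities $\cos\theta = -s/(1-r)$ and $\sin\theta = (1-t)/(1-r)$ along the slice yields $\partial_t \theta(s,t) = s/(1-r)^2 \neq 0$. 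The immersion property then follows from linear independence of $\partial_r \Lambda$ and $\partial_\theta \Lambda$, and injectivity follows by equating images and using that $\Lambda$ is a diffeomorphism on $[0,1) \times [0, \pi]$, for which one needs $e_I(\ast, r) < 1$ when $r < 1$ (a consequence of the concordance condition at $t = 1$).

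The only genuine calculation is the monotonicity $\partial_t \theta \neq 0$ for $s \neq 0$; the rest is careful bookkeeping of the regions in $J \times I = D_1 \cup D_2$, and I do not anticipate serious obstacles.
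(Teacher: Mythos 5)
Your proposal is correct and follows essentially the same route as the paper: reduce via the diffeomorphisms $h_s$ to checking the slice $\sigma(e)|_{\ast\times\{s\}\times I}$, handle $s=0$ by observing that the slice is $e$ composed with the inclusion of $M\times\{0\}\times I$, and for $s\neq 0$ establish the stronger claim that the slice is an embedding on all of $I$ regardless of $e$, by exploiting the $D_1/D_2$ decomposition and the fact that $\sigma(e)$ preserves the angular coordinate $\theta$ in $D_1$, which is monotone in $t$ along the slice. The only cosmetic difference is that you verify monotonicity by computing $\partial_t\theta = s/(1-r)^2 \neq 0$, whereas the paper records the explicit formula $t\mapsto\arctan(-(1-t)/s)$ for $\pr_{[0,\pi]}\circ\Lambda'^{-1}\circ g$ and observes it is independent of the concordance map; these are the same observation.
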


\begin{proof}
Going through the definition, we see that for $f\in \CF(\ast,M)$ the value at $0\in J=[-1,1]$ of the loop $(\tau\circ\sigma)(f)\colon J\ra \CF(\ast \times \{0\},M  \times J)$ agrees with the composition $\ast\times I\ra M\times I=M\times \{0\}\times I\subset M\times J\times I$ of $f$ with the inclusion, so it is an embedding (or immersion) on $\ast\times A$ if this holds for $f$. At any $s\neq 0$ the value turns out to be an embedding on \emph{all} of $\ast\times I$. To show this, since  $\tau$ restricts to a map between spaces of concordance embeddings, it suffices to prove that for any concordance map $f\colon \ast\times I\ra M\times I$, the restriction of $\sigma(f)\colon \ast\times J\times I\ra M\times J\times I$ along $\ast\times \{s\}\subset \ast\times J$ is an embedding for all $s\neq0$.

By the construction of the stabilisation map in \cref{sec:stabilisation-map} (and using the notation from that section), we have $\sigma(f)^{-1}(M \times D_i) \subset \ast \times D_i$ for $i=1,2$, and near $\ast \times D_2$ the map $\sigma(f)$ is the inclusion. Hence it suffices to prove that the map $g \coloneqq \big(\pr_{J \times I} \circ \sigma(f)|_{(\{s\} \times I) \cap D_1}\big) \colon(\{s\} \times I) \cap D_1 \ra J \times I$,
is an embedding. Noting that the restriction of the parametrisation $\Lambda$ from \eqref{equ:polar-coordinates} to a map $\Lambda' \colon [0,1) \times [0,\pi] \to D_1 \backslash \{(0,1)\}$ is a diffeomorphism and $\im(g) \subset \im(\Lambda')$, it suffices to prove that the composition $(\pr_{[0,\pi]} \circ \Lambda'^{-1} \circ g)$ is an embedding. Tracing through the definition and identifying $(\{s\} \times I) \cap D_1$ with $\smash{[1-\sqrt{1-s^2},1]}$ via $\pi_2 \colon J \times I \to I$, one sees that this composition is given by the formula $\smash{[1-\sqrt{1-s^2},1]\ni t\mapsto \arctan(-(1-t)/s)\in [0,\pi]}$ which is indeed an embedding. See \cref{fig:stabilisation-rest} for an illustration.
\end{proof}

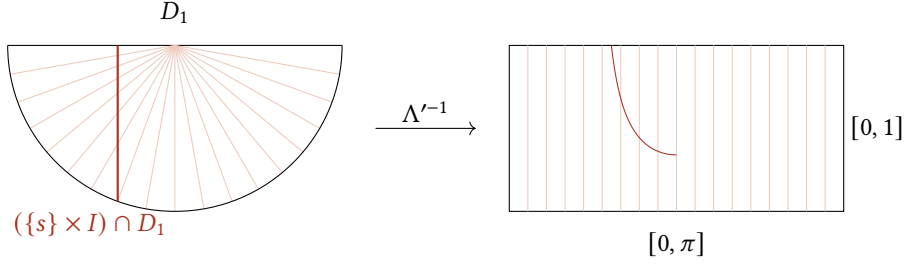
\begin{figure}
	\begin{tikzpicture}[scale=2.2]
		\draw (1,1) -- (-1,1);
		\foreach \i in {1,...,17}
		{
			\draw [Mahogany!20!white] (0,1) ++({-\i*10}:0) -- ++({-\i*10}:1);
		}
		\draw (1,1) arc [start angle=0, end angle=-180, x radius=1cm,y radius =1cm];
		
		\node at (0,1.2) {$D_1$};
		\draw [thick,Mahogany] ({cos(110)},{1-sin(110)}) -- ({cos(110)},1);
		
		\draw [->] (1.2,.5) -- (1.8,.5);
		\node at (1.5,.6) {$\Lambda'^{-1}$};
		
		\node [Mahogany] at ({1.2*cos(110)-.1},{1-1.2*sin(110)+.05}) {$(\{s\} \times I) \cap D_1$};
		
		\begin{scope}[xshift=3cm]
			\draw (1,1) rectangle (-1,0);
			\foreach \i in {1,...,17}
			{
				\draw [Mahogany!20!white] ({-1+2*\i/18},1) -- ({-1+2*\i/18},0);
			}
		\draw[domain=0.06:1,smooth,variable=\x,Mahogany] plot ({atan(-(1-\x)/0.34)/180},{sqrt(0.34^2+(1-\x)^2)});
		\node at (0,-.2) {$[0,\pi]$};
		\node at (1.2,.5) {$[0,1]$};
		\end{scope}
	\end{tikzpicture}
	\caption{The restriction of $\pr_{J \times I} \circ \sigma(f)$ to the indicated interval changes the radial coordinate (in $[0,1]$) but not the angle (in $[0,\pi]$). In particular, the compositions with $\pr_{[0,\pi]} \circ \Lambda'^{-1} \circ \pr_{J \times I} $ of $\sigma(f)$ and $\sigma(\inc)$ agree.}
	\label{fig:stabilisation-rest}
\end{figure}

As a result of \cref{lem: tilt}, we have induced maps 
\[
	\CE^A(\ast,M)\lra \Omega\CE^A(\ast \times \{0\},M\times J)\quad\text{and}\quad
	\CI^A(\ast,M) \lra \Omega\CI^A(\ast \times \{0\},M\times J)
\]
and thus also a map $\CbE^A(\ast,M) \ra \Omega\CbE^A(\ast \times \{0\},M\times J)$ on homotopy fibres over the inclusion. We denote all of these by $\rho$. Note that they are natural in inclusions of submanifolds $A'\subset A$ and $M\subset M'$ with $\partial M\cap P=\partial M'\cap P$. In particular, choosing three disjoint closed intervals $B_1,B_2,B_3\subset\interior(I)$ in the order $B_1<B_2<B_3$, and writing $A_T\coloneq \closure(I\backslash \cup_{i\in S} B_i)$,
for $T\subseteq\ul{3}=\{1,2,3\}$, the maps $\rho\colon \CbE^{A_{T}}(\ast,M_{S}) \ra \Omega \CbE^{A_{T}}(\ast \times \{0\},M_{S}\times J)$ assemble to a map 
\begin{equation}\label{eq: map of 3+r cubes}
	\rho\colon \CbE^{A_{\smallsquare}}(\ast,M_\bullet) \lra \Omega \CbE^{A_{\smallsquare}}(\ast \times \{0\},M_\bullet\times J).
\end{equation}
of $(3+r)$-cubes. This map induces a commutative square of $r$-cubes 
\begin{equation}\label{square with rho}
	\begin{tikzcd}\CbE^{A_\varnothing}(\ast,M_\bullet) \rar{\rho} \dar & \Omega \CbE^{A_\varnothing}(\ast \times \{0\},M_\bullet\times J) \dar \\
	\underset{\varnothing\neq T\subseteq\ul{3}}\holim\, \CbE^{A_T}(\ast, M_\bullet) \rar{\rho} & \underset{\varnothing\neq T\subseteq\ul{3}}\holim\, \Omega \CbE^{A_T}(\ast \times \{0\},M_\bullet\times J) \end{tikzcd}
\end{equation}
whose top map is the composition \eqref{equ:tausigma-bar} from Step \ref{step:cimm} (note that $A_\varnothing = I$), which we wish to prove to be $(2d-4+\Sigma)$-cartesian. It will follow from the next lemma (applied to $M$ and $M\times J$) that the left vertical map is $(2d-4+\Sigma)$-cartesian and the right vertical map is $(2d-3+\Sigma)$-cartesian, so using \cref{lem:cubelemma} \ref{cubelemma:iv} and \ref{cubelemma:v} it will be enough to show that the bottom map is $(2d-4+\Sigma)$-cartesian.

\begin{lem}\label{lem: three holes}
The $(3+r)$-cube $\CbE^{A_{\smallsquare}}(\ast,M_\bullet)$ is $(2d-4+\Sigma)$-cartesian.
\end{lem}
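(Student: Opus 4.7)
\emph{Plan of proof.} The strategy is to analyse $\CbE^A$ via its defining fibre sequence $\CbE^A = \hofib_\inc(\CE^A \to \CI^A)$, handle the immersion cube by a direct pointwise argument, and reduce the embedding cube to an instance of multirelative disjunction.

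\emph{Step 1: Reduction to a map of cubes.} By \cref{lem:cubelemma} \ref{cubelemma:iii} (the basepoint is unique up to path-components because $\CI^{I}(\ast,M_\varnothing)\simeq \Omega S^d$ is connected for $d\ge 3$ by \cref{rem:immersion-point}), the $(3+r)$-cube $\CbE^{A_{\smallsquare}}(\ast,M_\bullet)$ is $(2d-4+\Sigma)$-cartesian if and only if the map of $(3+r)$-cubes
\[
	\CE^{A_{\smallsquare}}(\ast,M_\bullet) \lra \CI^{A_{\smallsquare}}(\ast,M_\bullet),
\]
viewed as a $(4+r)$-cube, is $(2d-4+\Sigma)$-cartesian.

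\emph{Step 2: The immersion cube is $\infty$-cartesian.} For each fixed $S\subseteq \ul{r}$, I would verify that the $\ul{3}$-cube $\CI^{A_{\smallsquare}}(\ast,M_S)$ is $\infty$-cartesian by a direct inspection: the immersion condition is pointwise, so $\CI^A(\ast,M_S)$ is the open subspace of $\CF(\ast,M_S)$ cut out by requiring the $I$-derivative to be nonzero at every point of $\ast\times A$. Since the $B_i$ are pairwise disjoint, one checks $A_T\cup A_{T'}=A_{T\cap T'}$ and consequently
\[
	\CI^{A_T}(\ast,M_S)\cap \CI^{A_{T'}}(\ast,M_S)=\CI^{A_{T\cap T'}}(\ast,M_S).
\]
Hence every 2-face of the $\ul{3}$-cube is a strict (and homotopy) pullback of open inclusions, so the cube is $\infty$-cartesian. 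By \cref{lem:cube-of-cubes} \ref{cube-of-cubes:ii}, the full $(3+r)$-cube $\CI^{A_{\smallsquare}}(\ast,M_\bullet)$ is $\infty$-cartesian.

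\emph{Step 3: Reduction to the embedding cube.} By Step 2 and \cref{lem:cubelemma} \ref{cubelemma:ii}, it suffices to show that the $(3+r)$-cube $\CE^{A_{\smallsquare}}(\ast,M_\bullet)$ is itself $(2d-4+\Sigma)$-cartesian.

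\emph{Step 4: Multirelative disjunction for the embedding cube.} I would identify $\CE^{A_T}(\ast,M_S)$, up to equivalence, with a space of concordance embeddings of the disjoint union of arcs $\ast\times A_T$ in $M_S\times I$ with appropriate boundary conditions (free at the interior endpoints of $A_T$, fixed near $\ast\times\{0\}$, and lying in $M_S\times\{1\}$ at the other end). The contractibility of the space of smooth maps on intervals with prescribed endpoints from \cref{lem:CF-contractible} governs how the "free" endpoints interact with the rest of the concordance map. The resulting $(3+r)$-cube of concordance embedding spaces then falls under an application of Goodwillie's multirelative disjunction theorem \cref{thm:multipl-disjunction}, augmented with the source-removal version of \cref{cor:disjunction-excision} to handle the three disjoint intervals. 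Combining the contribution of the three intervals $B_i\subset \ast\times I$ with that of the $r$ submanifolds $Q_i\subset M$ yields the bound $(2d-4+\Sigma)$ on the cartesian-ness.

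\emph{Main obstacle.} Step 4 is by far the most delicate. The main difficulty is the reformulation of $\CE^{A_T}(\ast,M_S)$ as a standard concordance embedding space amenable to \cref{thm:multipl-disjunction} or \cref{cor:disjunction-excision}: the "free" boundary conditions at the interior endpoints of the arcs in $A_T$ are non-standard, and a careful set-up (possibly by attaching small collars or enlarging $M$ to absorb the free endpoints) is required. Tracking the combinatorics so as to extract precisely $(2d-4+\Sigma)$—which doubles the naive $(d-2+\Sigma)$ baseline contributed by the point $P=\ast$—is the key quantitative point and the reason the three intervals $B_1,B_2,B_3$ (rather than fewer) appear in the statement.
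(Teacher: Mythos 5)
Your Steps 1 and 3 correctly reduce the lemma to showing that $\CE^{A_{\smallsquare}}(\ast,M_\bullet)$ is $(2d-4+\Sigma)$-cartesian and that $\CI^{A_\smallsquare}(\ast,M_\bullet)$ is $\infty$-cartesian, which is exactly how the paper organises the proof. However, the arguments you give for these two facts each have a problem.

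In Step 2, the inference ``every 2-face is a strict pullback of open inclusions, hence a homotopy pullback, hence the cube is $\infty$-cartesian'' does not hold. A strict pullback of open inclusions is in general \emph{not} a homotopy pullback: take $Z=S^1$, $U=S^1\setminus\{1\}$, $V=S^1\setminus\{-1\}$; then $U\cap V$ has two contractible components while the homotopy pullback of $U\to Z\leftarrow V$ is $\Omega S^1\simeq\bfZ$. The conclusion you want is true, but one has to prove it. The paper does so by mapping $\CI^{A_\smallsquare}(\ast,M_\bullet)$ by restriction to $\I(A_\smallsquare,M_\bullet\times I)$, the cube of spaces of immersions of the $1$-manifolds $A_T$ into $M_S\times I$, and checking separately that this target cube is $\infty$-cartesian (by further restriction to $\I(A_{\ul{3}},M_\bullet)$, which is constant in some directions, with fibres handled by \cref{cor:cube-of-cubes}\ref{enum:product-cube}) and that the fibre cubes are products of mapping spaces, hence $\infty$-cartesian by \cref{cor:cube-of-cubes}\ref{enum:pointed-product cube} and \cref{lem:cube-of-cubes}\ref{cube-of-cubes:ii}.

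In Step 4 you identify the crux correctly --- the nonstandard boundary conditions at the interior endpoints of the arcs --- but the route you sketch, reformulating $\CE^{A_T}(\ast,M_S)$ as a concordance embedding space and then absorbing the free endpoints, is not what one should do, and it is not clear it can be carried out cleanly. The paper sidesteps the issue: instead of reformulating $\CE^{A_T}$, it uses the restriction fibration $\CE^{A_\smallsquare}(\ast,M_\bullet)\to\E(A_\smallsquare,M_\bullet\times I)$ whose base is an \emph{ordinary} embedding space (of the $1$-manifolds $A_T$ in the $(d+1)$-manifolds $M_S\times I$), for which \cref{cor:disjunction-excision} directly gives the bound $3-(d+1)+\Sigma+3((d+1)-1-2)=2d-4+\Sigma$. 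The cubes of homotopy fibres are of the shape $\ul 3\times\ul r\supseteq T\times S\mapsto\bigsqcap_{i\in T}\Map_\partial(B_i,M_S\times I)$, which are $\infty$-cartesian again by \cref{cor:cube-of-cubes}\ref{enum:pointed-product cube} and \cref{lem:cube-of-cubes}\ref{cube-of-cubes:ii}, so \cref{lem:cubelemma}\ref{cubelemma:i} and \ref{cubelemma:iii} give the cartesianness of $\CE^{A_\smallsquare}(\ast,M_\bullet)$. So the ``main obstacle'' you flag is resolved not by a delicate reformulation of $\CE^{A_T}$ but by fibering over $\E(A_T,M_S\times I)$, where the boundary conditions are entirely standard.
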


\begin{proof}We will show that $\CE^{A_{\smallsquare}}(\ast ,M_\bullet)$ is $(2d-4+\Sigma)$-cartesian and that $\CI^{A_\smallsquare}(\ast ,M_\bullet)$ is $\infty$-cartesian, which will imply the claim by combining \cref{lem:cubelemma} \ref{cubelemma:ii} and \ref{cubelemma:iii}. We begin with $\CE^{A_{\smallsquare}}(\ast ,M_\bullet)$. Adopting the notation from \cref{sec:multiple-disjunction}, we have a map of $(3+r)$-cubes $\CE^{A_{\smallsquare}}(\ast,M_\bullet) \ra \E(A_{\smallsquare},M_\bullet \times I)$ by restriction.
The $(3+r)$-cube $\E(A_{\smallsquare},M_\bullet \times I)$ is $(2d-4+\Sigma)$-cartesian by \cref{cor:disjunction-excision} (note that $d+1-1\ge3$ since $d+1\ge4$) and we will show next that the $(3+r)$-cubes of homotopy fibres over all basepoints are $\infty$-cartesian, so the claim will follow by \cref{lem:cubelemma} \ref{cubelemma:i} and \ref{cubelemma:iii}. These $(3+r)$-cubes of homotopy fibres have the form
\begin{equation*}\label{equ:cutting-cube-fibres}
	\textstyle{\ul{3} \times \ul{r} \supseteq T \times S \longmapsto \bigsqcap_{i \in T} \Map_\partial(B_i,M_S \times I)}
\end{equation*}
where the boundary conditions in the mapping spaces $\Map_\partial(B_i,M_S \times I)$ depend on the basepoint in $\E(I,M_\varnothing \times I)$ one takes homotopy fibres over. Combining \cref{lem:cube-of-cubes} \ref{cube-of-cubes:ii} and \cref{cor:cube-of-cubes} \ref{enum:pointed-product cube}, we see that these $(3+r)$-cubes are $\infty$-cartesian, as claimed.

The claim that $\CI^{A_{\smallsquare}}(\ast ,M_\bullet)$ is $\infty$-cartesian can be proved similarly: we have a restriction map $\CI^{A_{\smallsquare}}(\ast,M_\bullet) \ra \I(A_{\smallsquare},M_\bullet \times I)$
whose target is the analogue of $\E(A_\smallsquare,M_\bullet \times I)$ for immersions. The $(3+r)$-cubes of homotopy fibres are of the same form as previously and thus $\infty$-cartesian, so it suffices to show that $\I(A_{\smallsquare},M_\bullet \times I)$ is $\infty$-cartesian. To see this, we consider the restriction map $\I(A_\smallsquare,M_\bullet) \to \I(A_{\ul{3}},M_\bullet)$. The target $(3+r)$-cube is constant in some directions, so it is $\infty$-cartesian, and the $(3+r)$-cubes of homotopy fibres are all of the form treated in \cref{cor:cube-of-cubes} \ref{enum:product-cube}, so they are also $\infty$-cartesian.
\end{proof}

We are left to show that the bottom map in the diagram \eqref{square with rho} of $r$-cubes is $(2d-4+\Sigma)$-cartesian when considered as an $(r+1)$-cube; in fact we will find that it is $(2d-3+\Sigma)$-cartesian. The proof of this relies on the following lemma on $\CbE^A(\ast,M)$.

\begin{lem}\label{lem: points} Let $A\subset I$ be a compact $1$-dimensional submanifold that contains $\partial I$ and has $k+2$ path components for some $k\ge0$. Choose points $\{t_1,\dots ,t_k\}\subset A$, one in each path component of $A$ in the interior of $I$. Then the composition
\[
	\CbE^A(\ast,M)\lra \CE^A(\ast,M)\overset{\subset}\lra \CE^{\{t_1,\dots ,t_k\}}(\ast,M)
\]
is an equivalence. In particular, $\CbE^A(\ast,M)$ is contractible for $k \le1$.
\end{lem}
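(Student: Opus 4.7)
The plan is to identify $\CbE^A(\ast,M)$ with $\CE^{\{t_1,\ldots,t_k\}}(\ast,M)$ by reducing, via restriction to $\ast\times A$, to a question about spaces of embeddings and immersions of the $1$-manifold $A$ in $M\times I$, and then using Smale--Hirsch together with a configuration-space description. The first step is to show that the commutative square
\[
\begin{tikzcd}
\CE^A(\ast,M) \rar \dar & \Emb_\partial(A,M\times I) \dar \\
\CI^A(\ast,M) \rar & \Imm_\partial(A,M\times I)
\end{tikzcd}
\]
of restriction maps is homotopy cartesian. Here $\Emb_\partial$ and $\Imm_\partial$ denote spaces of embeddings, respectively immersions, of $A$ into $M\times I$ satisfying the concordance-style boundary conditions at $0\in A_0$ and $1\in A_{k+1}$, where $A=A_0\sqcup A_1\sqcup\cdots\sqcup A_{k+1}$ is the decomposition into components. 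The horizontal restriction maps are fibrations via parametrised isotopy extension, and the strict pullback of the lower-right cospan is already $\CE^A$ (any concordance map whose restriction to $A$ is an embedding is in $\CE^A$). Passing to vertical homotopy fibres at the inclusion basepoint then gives
\[
\CbE^A(\ast,M) \simeq \hofib_\inc\bigl(\Emb_\partial(A,M\times I)\hookrightarrow \Imm_\partial(A,M\times I)\bigr).
\]

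Next one decomposes $\Imm_\partial(A,M\times I)=\prod_j\Imm_\partial(A_j,M\times I)$ and observes that $\Emb_\partial(A,M\times I)\subset \prod_j\Emb_\partial(A_j,M\times I)$ is the subspace of tuples with pairwise disjoint images. Since each $A_j$ is an interval and the target $M\times I$ has dimension $d+1\ge 4$, the Smale--Hirsch theorem together with the fact that families of immersed intervals in codimension $\ge 2$ generically have no double points shows that $\Emb_\partial(A_j,M\times I)\hookrightarrow\Imm_\partial(A_j,M\times I)$ is a weak equivalence on the path component of the standard inclusion. Feeding this into the previous step, the remaining content is
\[
\CbE^A(\ast,M)\simeq \hofib_\inc\bigl(\Emb_\partial(A,M\times I)\hookrightarrow \textstyle\prod_j\Emb_\partial(A_j,M\times I)\bigr),
\]
i.e.\ only the pairwise-disjointness condition on arcs survives.

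For the final identification, the analogous homotopy-cartesian restriction square for the point set $\{t_1,\ldots,t_k\}$ realises $\CE^{\{t_1,\ldots,t_k\}}(\ast,M)$ as the homotopy pullback of $\Conf_k(M\times I)\hookrightarrow (M\times I)^k$ along the evaluation $\CF(\ast,M)\to (M\times I)^k$ at the $t_i$. Evaluating the homotopy fibre from the previous step at the chosen points $t_i$ of the interior components $A_i$ produces a comparison map into $\Conf_k(M\times I)$, and the two descriptions match: the boundary components $A_0$ and $A_{k+1}$ contribute contractible correction terms due to their prescribed behaviour, and $\CF(\ast,M)$ is contractible by \cref{lem:CF-contractible}. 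A careful check shows that this identification is compatible with the forgetful composition of the statement, yielding the claimed equivalence. The ``in particular'' clause for $k\le 1$ is then immediate, since $\CE^{\{t_1,\ldots,t_k\}}(\ast,M)=\CF(\ast,M)$ is contractible when the distinctness condition on at most one point is vacuous. The main obstacle is this last matching step: one must verify that the Smale--Hirsch and configuration-space identifications line up with the naive forgetful composition $\CbE^A\to \CE^A\hookrightarrow \CE^{\{t_1,\ldots,t_k\}}$ rather than with a twist or suspension.
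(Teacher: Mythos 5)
Your proof is correct in outline and rests on the same two essential observations the paper uses — restricting to $\ast\times A$ converts the question into one about embeddings vs.\ immersions of the $1$-manifold $A$ in $M\times I$, and the inclusion $\E(\text{arc},N)\hookrightarrow\I(\text{arc},N)$ is an equivalence — but you package them differently, and your packaging leaves you with an extra compatibility check that the paper's does not. The paper forms the square of inclusions with corners $\CE^A(\ast,M)$, $\CE^{\{t_1,\ldots,t_k\}}(\ast,M)$, $\CI^A(\ast,M)$, $\CI^{\{t_1,\ldots,t_k\}}(\ast,M)$, shows it is homotopy cartesian (by restricting to $A$ and invoking the arc fact componentwise), and then simply takes vertical homotopy fibres over the inclusion. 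Since $\CI^{\{t_1,\ldots,t_k\}}(\ast,M)=\CF(\ast,M)$ is contractible, the induced map on vertical fibres is \emph{exactly} the composition $\CbE^A\to\CE^A\hookrightarrow\CE^{\{t_1,\ldots,t_k\}}$ of the statement, so no separate matching argument is needed. You instead compute $\CbE^A(\ast,M)$ and $\CE^{\{t_1,\ldots,t_k\}}(\ast,M)$ separately — the former as $\hofib_\inc(\Emb_\partial(A,M\times I)\to\Imm_\partial(A,M\times I))$, simplified via the per-component arc equivalence, and the latter as $\hofib_\inc(\Conf_k(M\times I)\to(M\times I)^k)$ — and then have to identify the two models compatibly with the forgetful composition, which is precisely the ``main obstacle'' you flag and leave undone. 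That step is not a genuine gap (evaluation at the $t_i$ on the interior arcs furnishes the comparison, and commutativity follows from naturality of restriction), but the paper's formulation avoids it entirely, and I would encourage you to notice that proving the relevant square is cartesian \emph{and then} taking fibres is logically cleaner than computing the fibres first and then comparing.

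Two smaller points. First, your justification that $\Emb_\partial(A_j,M\times I)\hookrightarrow\Imm_\partial(A_j,M\times I)$ is a weak equivalence via ``families of immersed intervals in codimension $\ge 2$ generically have no double points'' is looser than it should be: a genericity statement gives information at the level of $\pi_0$ (or needs a parametrised transversality argument to say more); the cleaner argument is that both the embedding and immersion spaces deformation retract onto the constant inclusion (for the boundary components, via the prescribed behaviour near $\partial I$) or onto $M\times I$ via evaluation at $t_i$ (for the interior components), compatibly with the inclusion. Second, you should be a bit careful with the claim that the first square is a homotopy pullback: it is a strict pullback, and you do need that one of the two maps into $\Imm_\partial(A,M\times I)$ is a fibration, which holds for the restriction $\CI^A(\ast,M)\to\Imm_\partial(A,M\times I)$ by the isotopy extension theorem for immersions — worth saying explicitly.
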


\begin{proof}Using that the forgetful map $\E([0,1],N)\ra \I([0,1],N)$ is an equivalence for any manifold $N$, one sees that the commutative square of inclusions
\[
	\begin{tikzcd} \CE^A(\ast ,M) \rar \dar & \CE^{\{t_1,\dots ,t_k\}}(\ast ,M) \dar \\
	\CI^A(\ast ,M) \rar& \CI^{\{t_1,\dots ,t_k\}}(\ast ,M)\end{tikzcd}
\]
is homotopy cartesian. Taking vertical homotopy fibres over the inclusion and using that $\CI^{\{t_1,\dots ,t_k\}}(\ast ,M)=\CF(\ast ,M)$ is contractible by \cref{lem:CF-contractible}, this implies the claim. The addendum follows by noting that for $k\le1$, we have $\CE^{\{t_1,\dots ,t_k\}}(\ast,M)=\CF(\ast,M)$.\end{proof}

Choosing a point $t_1\in \interior(I)$ between $B_1$ and $B_2$ and a point $t_2\in \interior(I)$ between $B_2$ and $B_3$ (so in particular $t_1<t_2$), we obtain equivalences of the form
\[
	\underset{\varnothing\neq T\subseteq\ul{3}}\holim\, \CbE^{A_T}(\ast,M)\xlla{\simeq} \Omega^2 \CbE^{A_{\ul{3}} }(\ast,M)\xlra{\simeq} \Omega^2 \CE^{\{t_1,t_2\}}(\ast,M),
\]
where the left equivalence uses that the inclusion of the basepoint into $\CbE^{A_T}(\ast,M)$ is an equivalence whenever $\varnothing \neq S\subsetneq \ul{3}$, by \cref{lem: points}, and the right equivalence uses that the canonical map $\CbE^{A_{\ul{3}} }(\ast,M)\ra \CE^{\{t_1,t_2\}}(\ast,M)$ is an equivalence, by the same lemma. Replacing $M$ by $M\times J$, there are the analogous equivalences
\[
	\underset{\varnothing\neq T\subseteq\ul{3}}\holim\, \CbE^{A_T}(\ast \times \{0\},M\times J) \xlla{\simeq} \Omega^2 \CbE^{A_{\ul{3}} }(\ast \times \{0\},M\times J) \xlra{\simeq} \Omega^2 \CE^{\{t_1,t_2\}}(\ast \times \{0\},M\times J).
\]
All these equivalences are natural in $M$ and compatible with $\rho$, so we may extend diagram \eqref{square with rho}, obtaining a commutative diagram
\begin{equation} \label{square with rho2}
	\begin{tikzcd}[ar symbol/.style = {draw=none,"\textstyle#1" description,sloped},
  equivalence/.style = {ar symbol={\simeq}}]
	\CbE(\ast,M_\bullet) \rar{\rho} \dar & \Omega \CbE(\ast \times \{0\},M_\bullet\times J) \dar \\
	\underset{\varnothing\neq T\subseteq\ul{3}}\holim\, \CbE^{A_T}(\ast, M_\bullet) \rar{\rho} \arrow[d,equivalence,start anchor={[xshift=-5pt]}]& \underset{\varnothing\neq T\subseteq\ul{3}}\holim\, \Omega \CbE^{A_T}(\ast \times \{0\},M_\bullet\times J)\arrow[d,equivalence,start anchor={[xshift=-5pt]}]\\[-0.3cm]
	\Omega^2\CE^{\{t_1,t_2\}}(\ast, M_\bullet )\rar{\Omega^2\rho}&\Omega^3 \CE^{\{t_1,t_2\}}(\ast \times \{0\},M_\bullet \times J)
	\end{tikzcd}
\end{equation}
where the vertical $\simeq$-signs indicate a zig-zag of equivalences compatible with the horizontal maps. Therefore, to show that the bottom map of $r$-cubes in \eqref{square with rho} is $(2d-3+\Sigma)$-cartesian (and thus also the top row), it suffices to show that the map of $r$-cubes
\begin{equation}\label {rho}
	\rho\colon \CE^{\{t_1,t_2\}}(\ast, M_\bullet ) \lra \Omega \CE^{\{t_1,t_2\}}(\ast \times \{0\},M_\bullet \times J)
\end{equation}
is $(2d-1+\Sigma)$-cartesian as an $(r+1)$-cube.

\subsubsection*{Step \ref{step:blakers-massey}: Applying the Blakers--Massey theorem}In this step, we finish the proof by showing that \eqref{rho} is $(2d-1+\Sigma)$-cartesian. To this end, we first make two alterations to the map of $r$-cubes \eqref{rho}: enlarging its target by an equivalence and performing a homotopy. Some of the ideas involved are similar to those in the proof of \cref{lem:cimm-freudenthal}.

The enlargement of the target is done by considering the two subspaces
\[
	C_+(\ast, M)\subset \CE^{\{t_1,t_2\}}(\ast \times \{0\},M\times J)\quad\text{and}\quad C_-(\ast, M)\subset \CE^{\{t_1,t_2\}}(\ast \times \{0\},M\times J)
\]
where $C_+(\ast, M)\subset \CE^{\{t_1,t_2\}}(\ast \times \{0\},M\times J)$ consists of those concordance maps $f\colon \ast \times \{0\}\times I\ra M\times J\times I$ for which $f(\ast,0,t_1)$ is \emph{not} directly to the right of $f(\ast,0,t_2)$, where $(x_1,s_1,r_1)$ in $M\times J\times I$ is \emph{directly to the right of $(x_2,s_2,r_2)$} if $x_1=x_2$, $r_1=r_2$, and $s_1>s_2$.  The space $C_-(\ast, M)$ is defined similarly, replacing right by left. In terms of these subspaces, we define
\[
	\Omega^{\pm} \CE^{\{t_1,t_2\}}(\ast \times \{0\},M \times J)
\]
as the space of paths in $ \CE^{\{t_1,t_2\}}(\ast \times \{0\},M \times J)$ that start in $C_+(\ast,M)$ and end in $C_-(\ast,M)$, i.e.\,the homotopy limit of the zig-zag
\[
	C_+(\ast, M) \xlra{\subset} \CE^{\{t_1,t_2\}}(\ast \times \{0\},M\times J)\xlla{\supset} C_-(\ast, M).
\]
Including the basepoint into $C_+(\ast, M)$ and  $C_-(\ast, M)$ induces an inclusion
\begin{equation}\label{equ:thicken-source}
	\Omega \CE^{\{t_1,t_2\}}(\ast \times \{0\},M \times J)\xlra{\subset} \Omega^{\pm} \CE^{\{t_1,t_2\}}(\ast \times \{0\},M \times J)
\end{equation}
which is an equivalence as a result of the following lemma.

\begin{lem}$C_+(\ast, M)$ and $C_-(\ast, M)$ are contractible. In particular, \eqref{equ:thicken-source} is an equivalence.\end{lem}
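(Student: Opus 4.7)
The plan is to build an explicit strong deformation retraction of $C_+(\ast, M)$ onto a subspace that I will identify with $\CF(\ast, M)$, so that \cref{lem:CF-contractible} finishes the job. The argument for $C_-(\ast, M)$ will be symmetric, using a reflected auxiliary function.

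Here is how I would proceed. Fix once and for all a smooth $\psi \colon I \to J$ with $\psi \equiv 0$ near $t = 0$, $\psi(t_1) = -1/2$, and $\psi(t_2) = 1/2$. Writing a concordance map in coordinates as $f(\ast,0,t) = (x(t),s(t),r(t))$, define
\[
H(f,u)(\ast,0,t) \coloneq \bigl(x(t),\,(1-u)s(t)+u\psi(t),\,r(t)\bigr),\qquad u\in[0,1],
\]
so $H$ only interpolates the $J$-coordinate towards $\psi$, fixing the $M$- and $I$-coordinates. Convex combinations keep the $J$-value in $J=[-1,1]$ and $\psi\equiv 0$ near $0$ preserves the boundary conditions, so $H(f,u)$ is again a concordance map. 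At $u=1$ the map lies in the subspace $E_\psi$ of concordance maps with $J$-coordinate exactly $\psi$, and $H$ visibly fixes $E_\psi$ pointwise.

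The main point to verify --- and the step I expect to require the most care --- is that $H(f,u)$ remains in $C_+$ for every $f \in C_+$ and $u \in [0,1]$. I would handle this by a case split on whether $x(t_1) = x(t_2)$ and $r(t_1) = r(t_2)$. If either equality fails, then the ``directly to the right'' configuration cannot occur at any $u$ for trivial reasons, so $H(f,u) \in C_+$ automatically. In the remaining case, membership $f \in C_+$ together with $f(t_1) \neq f(t_2)$ forces $s(t_1) < s(t_2)$ strictly. A direct computation then gives
\[
\bigl[(1-u)s(t_2)+u\psi(t_2)\bigr] - \bigl[(1-u)s(t_1)+u\psi(t_1)\bigr] = (1-u)(s(t_2)-s(t_1)) + u > 0
\]
for every $u \in [0,1]$, so the ordering is preserved throughout and $H(f,u)$ never enters the forbidden configuration.

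Finally, I would identify $E_\psi$ with $\CF(\ast, M)$ via the homeomorphism $(x,\psi,r) \mapsto (x,r)$: the concordance-map conditions on a map $\ast \times I \to M \times J \times I$ whose $J$-coordinate is fixed to be $\psi$ match those on a map $\ast \times I \to M \times I$, and every element of $E_\psi$ automatically lies in $C_+ \subset \CE^{\{t_1,t_2\}}$ because $\psi(t_1) < \psi(t_2)$. Contractibility of $\CF(\ast, M)$ from \cref{lem:CF-contractible} then gives contractibility of $C_+(\ast, M)$; the argument for $C_-(\ast, M)$ is identical using $-\psi$ in place of $\psi$. The ``in particular'' is immediate from the defining zig-zag for $\Omega^{\pm}$: with $C_+(\ast, M)$ and $C_-(\ast, M)$ contractible, the homotopy limit collapses to $\Omega \CE^{\{t_1,t_2\}}(\ast \times \{0\}, M \times J)$, so \eqref{equ:thicken-source} is an equivalence.
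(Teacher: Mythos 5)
Your proof is correct and supplies exactly the explicit formula that the paper leaves to the reader: linear interpolation of the $J$-coordinate toward a fixed separating function $\psi$, together with the (necessary) verification that this interpolation preserves membership in $C_\pm$. There is a mild organizational difference worth noting: the paper proposes to deformation-retract the contractible ambient space $\CF(\ast \times \{0\}, M \times J)$ onto the open subset $C_\pm$, whereas you retract $C_\pm$ inward onto the closed affine slice $E_\psi \cong \CF(\ast, M)$ and then invoke \cref{lem:CF-contractible} for that smaller space; your variant is easier to write down explicitly precisely because the target of the retraction is a closed slice rather than the open set $C_\pm$ itself, and the only nontrivial point---that the homotopy never leaves $C_\pm$---is the case analysis you carry out.
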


\begin{proof}
It suffices to prove that $\CF(\ast \times \{0\}, M\times J)$ deformation-retracts onto the subspace $C_+(\ast,M)$ (respectively $C_-(\ast,M)$), since $\CF(\ast \times \{0\}, M\times J)$ is contractible by \cref{lem:CF-contractible}. To see this one deforms $f\in \CF(\ast \times \{0\}, M\times J)$ in such a way that $f(t_1)$ moves directly to the left (respectively right) while $f(t_2)$ moves directly to the right (respectively left). We leave it to the reader to provide an explicit formula. 
\end{proof}

Note that the equivalence \eqref{equ:thicken-source} is natural in $M$, so we may replace \eqref{rho} by the composition
\[
	\rho^{\pm}\colon \CE^{\{t_1,t_2\}}(\ast, M_\bullet ) \xlra{\rho}\Omega \CE^{\{t_1,t_2\}}(\ast \times \{0\},M_\bullet \times J) \xlra{\subset}\Omega^{\pm} \CE^{\{t_1,t_2\}}(\ast \times \{0\},M_\bullet \times J).
\]
The proof will be completed by showing that the map $\rho^{\pm}$ of $r$-cubes is $(2d-1+\Sigma)$-cartesian as an $(r+1)$-cube. To do so, we first note that the subspaces $C_+(\ast, M)$ and $C_-(\ast, M)$ of $\CE^{\{t_1,t_2\}}(\ast \times \{0\},M\times J)$ are open, and that their union is the entire space. Note also that their intersection is equivalent to $\CE^{\{t_1,t_2\}}(\ast,M)$, since viewing a map $f\colon \ast\times\{0\}\times I\ra M\times J\times I$ as a pair of a map $\ast\times I\ra M\times I$ and a map $I\ra J$ induces a homeomorphism from $C_+(\ast, M)\cap C_-(\ast, M)$ to the product of $\CE^{\{t_1,t_2\}}(\ast,M)$ with the contractible space of smooth maps $I\to J$ that take a neighbourhood of $0$ to $0$. Identifying the subspace  $\CE^{\{t_1,t_2\}}(\ast \times \{0\},M\times \{0\})\subset\CE^{\{t_1,t_2\}}(\ast \times \{0\},M\times J)$ with $\CE^{\{t_1,t_2\}}(\ast ,M)$ we thus have a homotopy cocartesian square 
\begin{equation}\label{eq: pushout square}
	\begin{tikzcd}
	\CE^{\{t_1,t_2\}}(\ast,M)\rar\dar & C_+(\ast,M)\dar\\
	C_-(\ast,M)\rar & \CE^{\{t_1,t_2\}}(\ast \times \{0\},M\times J).
	\end{tikzcd}
\end{equation}

\begin{lem}\label{lem: left-right}
For $f\in \CE^{\{t_1,t_2\}}(\ast,M)$, the loop 
\[
	\rho(f)\colon J \lra \CE^{\{t_1,t_2\}}(\ast \times \{0\},M \times J)
\] 
satisfies $\rho(f)_s\in C_+(\ast, M)$ for $s\in[-1,0]\subset J$ and $\rho(f)_s\in C_-(\ast, M)$ for $s\in[0,1]\subset J$
\end{lem}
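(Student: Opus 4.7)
The plan is to unwind the definition $\rho(f)_s = \tau(\sigma(f))_s$ and verify the ``not directly to the right'' condition by direct computation. From the formula for $\tau$ defined in Step~1 and the relation $h_s^{-1}(0)=s$, we have $\rho(f)_s(\ast,0,t) = (\id_M \times h_s \times \id_I)\bigl(\sigma(f)(\ast,s,t)\bigr)$, so the task reduces to computing $\sigma(f)(\ast,s,t_i)$ for $i=1,2$ and then applying the orientation-preserving diffeomorphism $h_s$ to the $J$-coordinate.

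I would split into cases according to whether $(s,t_i)\in D_1$ or $(s,t_i)\in D_2$. Whenever at least one of the pair $(s,t_1),(s,t_2)$ lies in $D_2$, the map $\sigma(f)$ is the identity there, so the $I$-coordinate of the corresponding output equals $t_i$; since $t_1\neq t_2$, the $I$-coordinates of $\rho(f)_s(\ast,0,t_1)$ and $\rho(f)_s(\ast,0,t_2)$ differ, and $\rho(f)_s$ lies in $C_+\cap C_-$ automatically. The serious case is when both $(s,t_i)\in D_1$: writing $(s,t_i)=\Lambda(r_i,\theta_i)$ with $r_i=1-\sqrt{(1-t_i)^2+s^2}$, $\cos\theta_i=-s/(1-r_i)$, $\sin\theta_i=(1-t_i)/(1-r_i)$, and $\alpha_i \coloneqq (1-f_I(\ast,r_i))/(1-r_i)$, the definition of $\sigma$ gives the explicit formula
\[
\sigma(f)(\ast,s,t_i) = \bigl(f_M(\ast,r_i),\; s\alpha_i,\; 1-\alpha_i(1-t_i)\bigr).
\]

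Assuming for contradiction that the three conditions for ``directly to the right'' all hold---$f_M(\ast,r_1)=f_M(\ast,r_2)$, the $I$-match $\alpha_1(1-t_1)=\alpha_2(1-t_2)$, and the $J$-inequality $h_s(s\alpha_1)>h_s(s\alpha_2)$---one combines the $I$-match with $t_1<t_2$ to get $\alpha_1/\alpha_2=(1-t_2)/(1-t_1)<1$, hence $\alpha_1<\alpha_2$. For $s\le 0$ this determines the sign of $s\alpha_1-s\alpha_2$, and since $h_s$ is order-preserving, the resulting comparison of $h_s(s\alpha_1)$ and $h_s(s\alpha_2)$ contradicts the assumed $J$-inequality. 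This gives $\rho(f)_s\in C_+$ for $s\in[-1,0]$, and the case $s\in[0,1]$ is handled symmetrically (the inequality on $s\alpha_1-s\alpha_2$ flips, ruling out ``directly to the left'' instead), giving $\rho(f)_s\in C_-$.

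The main obstacle is bookkeeping rather than substance: one must check that the $D_1$ and $D_2$ formulas agree along the boundary $\partial D_1\cap(J\times I)$ (where $r_i=0$ forces the $\sigma$-formula to collapse to the inclusion), and handle the degenerate parameters $s=0$ and $s=\pm 1$ separately---at $s=0$, $\rho(f)_0$ is the inclusion of $f$ into the $J=0$-slice, and at $s=\pm 1$ it is the standard inclusion, both trivially in $C_+\cap C_-$. The rest is an elementary trigonometric manipulation and careful tracking of signs through $h_s$.
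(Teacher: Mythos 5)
Your approach matches the paper's in spirit: reduce to $\sigma(f)_s$ via the scanning map, compute $\sigma(f)(\ast,s,t_i)$ explicitly through the polar parametrisation $\Lambda$, and split into $D_1$/$D_2$ cases. The formula $\sigma(f)(\ast,s,t_i) = \bigl(f_M(\ast,r_i),\, s\alpha_i,\, 1-\alpha_i(1-t_i)\bigr)$ with $\alpha_i = (1-f_I(\ast,r_i))/(1-r_i)$ is correct, as is the deduction $\alpha_1 < \alpha_2$ from the $I$-match and $t_1 < t_2$.

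However, the punchline of the $D_1$/$D_1$ case has a sign error. For $s\le 0$, multiplying $\alpha_1 < \alpha_2$ by the nonpositive quantity $s$ \emph{reverses} the inequality: $s\alpha_1 \geq s\alpha_2$. Since $h_s$ is order-preserving, this gives $h_s(s\alpha_1) \geq h_s(s\alpha_2)$, which is \emph{consistent} with the assumed $J$-inequality $h_s(s\alpha_1) > h_s(s\alpha_2)$, not in contradiction with it. So the ``contradiction'' you invoke is not there. What the calculation actually rules out, for $s\leq 0$, is ``directly to the left''; i.e.\ it shows $\rho(f)_s\in C_-$, not $\rho(f)_s\in C_+$. (The paper's proof has the same discrepancy: in its final sentence, ``$s_1<s_2$'' should read ``$s_1>s_2$'' once one traces through which of $\theta_1,\theta_2$ is larger. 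This is cosmetic, because Step~$\circled{4}$ is entirely symmetric in $C_+\leftrightarrow C_-$ and $\Conf_+\leftrightarrow\Conf_-$, so the rest of the argument goes through after a global relabelling --- but you should not present a ``contradiction'' that your own inequalities do not produce.)

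Separately, the mixed case is handled too quickly. If $(s,t_1)\in D_2$ and $(s,t_2)\in D_1$ (which is the only possible mixed configuration since $D_1\cap(\{s\}\times I)$ sits above $D_2\cap(\{s\}\times I)$ and $t_1<t_2$), the $I$-coordinate of the $t_1$-output is indeed $t_1$, but that of the $t_2$-output is $1-\alpha_2(1-t_2)$, \emph{not} $t_2$, so ``since $t_1\neq t_2$, the $I$-coordinates differ'' does not follow directly. The conclusion is true, but requires the estimate
\[
\alpha_2(1-t_2) \;=\; (1-f_I(\ast,r_2))\sin\theta_2 \;<\; \sin\theta_2 \;\leq\; \sqrt{1-s^2} \;<\; 1-t_1,
\]
where the middle inequality uses $(s,t_2)\in D_1$ and the last one uses $(s,t_1)\in\interior(D_2)$.
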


\begin{proof}
Since $\rho(f)_0$ agrees with the composition of $f$ with the inclusion $M \times I=M\times\{0\}\times I\subset M \times J \times I$, its composition with $\pr_{M,I}$ is injective on $\{t_1,t_2\}$, so $\rho(f)_0\in C_+(\ast, M)\cap C_-(\ast, M)$. We may thus assume $s \neq 0$. Going through the definition, we see that $\rho(f)_s$ is obtained from $\sigma(f)_s= \sigma(f)|_{\ast \times \{s\} \times I}\colon \ast \times \{s\} \times I\ra \ast\times J\times M$ by postcomposition with the diffeomorphism $\id_M \times h_s \times \id_I$ from Step \ref{step:scan}. The latter preserves the property that the value at $t_1$ is not directly to the left (respectively right) of the value at $t_2$, so it suffices to prove that $\sigma(f)_s \in C_+(\ast,M)$ for $s < 0$ and $\sigma(f)_s \in C_-(\ast,M)$ for $s > 0$. The proof in the two cases are analogous. We will explain the former, so fix $s<0$.
	
We write $\sigma(f)(\ast,s,t_1) = (x_1,s_1,r_1)$ and $\sigma(f)(\ast,s,t_2)= (x_2,s_2,r_2)$, and encourage the reader to (a) recall the construction of $\rho$ in \cref{sec:stabilisation-map}, including the decomposition $D_1\cup D_2=J\times I$, and to (b) look at \cref{fig:stabilisation-left-right}. Intersecting the decomposition $D_1\cup D_2=J\times I$ with $\{s\} \times I$ gives a decomposition $I= (\{s\} \times I\cap D_1)\cup (\{s\} \times I\cap D_2)$. The map $\sigma(f)_s$ preserves this decomposition and agrees with the inclusion on $(\{s\} \times I\cap D_2)$, so it follows that if $t_1$ and $t_2$ do not both lie in $(\{s\} \times I\cap D_1)$, then $\sigma(f)(\ast,s,t_1)$ is neither directly to the right nor the left of $\sigma(f)(\ast,s,t_2)$. Otherwise, since $(\pr_{J,I} \circ \sigma(f))|_{\ast\times D_1}$ preserves the radial segments $\Lambda([0,1] \times \{\theta\}) \subset D_1$ for $\theta\in [0,\pi]$ where $\Lambda$ is as in \eqref{equ:polar-coordinates}, we have points $(\pr_{J,I} \circ \sigma(f))(\ast,s,t_1)$ and $(\pr_{J,I} \circ \sigma)(f)(\ast,s,t_2)$ must lie on different radial segments, where the latter is closer to $\Lambda([0,1] \times \{\pi/2\})=[-1,0] \times \{1\} \subset J \times I$. Then the only way to have $r_1 = r_2$ is if $s_1<s_2$, so $\sigma(f)(\ast,s,t_1)$ is not directly to the right of $\sigma(f)(\ast,s,t_2)$.
\end{proof}

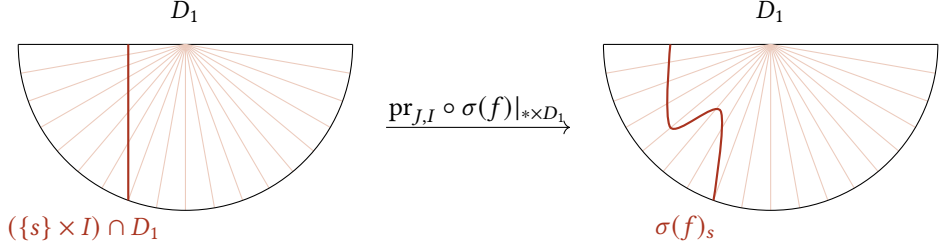
\begin{figure}
	\begin{tikzpicture}[scale=2.2]
		\draw (1,1) -- (-1,1);
		\foreach \i in {1,...,17}
		{
			\draw [Mahogany!20!white] (0,1) ++({-\i*10}:0) -- ++({-\i*10}:1);
		}
		\draw (1,1) arc [start angle=0, end angle=-180, x radius=1cm,y radius =1cm];
		
		\node at (0,1.2) {$D_1$};
		\draw [thick,Mahogany] ({cos(110)},{1-sin(110)}) -- ({cos(110)},1);
		
		\draw [->] (1.2,.5) -- (2.3,.5);
		\node at (1.75,.6) {$\pr_{J,I} \circ \sigma(f)|_{* \times D_1}$};
		
		\node [Mahogany] at ({1.2*cos(110)-.2},{1-1.2*sin(110)+.02}) {$(\{s\} \times I) \cap D_1$};
		
		\begin{scope}[xshift=3.5cm]
			\draw (1,1) -- (-1,1);
			\foreach \i in {1,...,17}
			{
				\draw [Mahogany!20!white] (0,1) ++({-\i*10}:0) -- ++({-\i*10}:1);
			}
			\draw (1,1) arc [start angle=0, end angle=-180, x radius=1cm,y radius =1cm];
			
			\node at (0,1.2) {$D_1$};
			\draw [thick,Mahogany] plot [smooth] coordinates {({-cos(70)},{1-sin(70)}) (-.3,.6) (-.6,.5) (-.6,1)};
			\node [Mahogany] at ({-1.2*cos(70)-.1},{1-1.2*sin(70)+.02}) {$\sigma(f)_s$};
		\end{scope}
	\end{tikzpicture}
	\caption{As $\sigma(f)$ preserves the radial segments indicated in light red, for $s<0$, if $\sigma(f)(\ast,s,t_1)$ has the same $I$-coordinate (depicted vertically) as $\sigma(f)(\ast,s,t_2)$ then it has smaller $J$-coordinate (depicted horizontally).}
	\label{fig:stabilisation-left-right}
\end{figure}

In view of \cref{lem: left-right}, we have a homotopy
\[
	[0,1]\times \CE^{\{t_1,t_2\}}(\ast,M)\lra\Omega^{\pm }\CE^{\{t_1,t_2\}}(\ast \times \{0\},M \times J).
\]
that sends $(u,f)$ to $[-1,1]\ni s\mapsto\rho(f)_{(1-u)s} \in \CE^{\{t_1,t_2\}}(\ast \times \{0\},M \times J)$ for $0\le u\le 1$. It starts at $\rho^{\pm}$ and ends at the map taking $f$ to the constant path at $\rho(f)_0$. The latter is the map induced by the commutative square \eqref{eq: pushout square} by mapping the upper left corner to the homotopy limit of the others. Since the homotopy is natural in $M$, this reduces the claim that $\rho^{\pm}$ is $(2d-1+\Sigma)$-cartesian to showing that the square of $r$-cubes \begin{equation}\label{eq: square of cubes}
	\begin{tikzcd}
	C_+(\ast,M_\bullet)\cap C_-(\ast,M_\bullet)\rar\dar & C_+(\ast,M_\bullet)\dar\\
	C_-(\ast,M_\bullet)\rar & \CE^{\{t_1,t_2\}}(\ast \times \{0\},M_\bullet\times J).
	\end{tikzcd}
\end{equation}
is $(2d-1+\Sigma)$-cartesian when considered as an $(r+2)$-cube. We intend to do so by means of the multirelative Blakers--Massey \cref{thm:blakers-massey}. However, the $(r+2)$-cube \eqref{eq: square of cubes} is not strongly cocartesian, so we will replace it---in two steps---by an $(r+2)$-cube that is. 

First, we consider the configuration space $\Conf(2,M\times J\times I)$ of ordered pairs $(p_1,p_2)$ of distinct points in the interior of $M\times J\times I$ and the fibration 
\[
\CE^{\{t_1,t_2\}}(\ast \times \{0\},M\times J)\lra \Conf(2,M\times J\times I)
\]
given by evaluation at $t_1$ and $t_2$. The sets $C_+(\ast,M)$ and $C_-(\ast,M)$, are the preimages of open sets $\Conf_+(2,M\times J\times I)$ and $\Conf_-(2,M\times J\times I)$, defined respectively by requiring $p_1$ to be not directly to the right and not directly to the left of $p_2$. This is natural in $M$, and thus implies that we have an $\infty$-cartesian map from (\ref{eq: square of cubes}) to the square of $r$-cubes 
\begin{equation}\label{middle cube}\begin{tikzcd}
\Conf_+(2,M_\bullet\times J\times I)\cap \Conf_-(2,M_\bullet\times J\times I)\rar\dar & \Conf_+(2,M_\bullet\times J\times I)\dar\\
\Conf_-(2,M_\bullet\times J\times I)\rar & \Conf(2,M_\bullet\times J\times I),
\end{tikzcd}\end{equation}
so it suffices to prove that this $(r+2)$-cube is $(2d-1+\Sigma)$-cartesian, using \cref{lem:cubelemma} \ref{cubelemma:i}. That the map from (\ref{eq: square of cubes}) to (\ref{middle cube}) is indeed $\infty$-cartesian follows from an application of \cref{lem:cubelemma} \ref{cubelemma:iii} and \cref{cor:cube-of-cubes} \ref{cube-of-cubes:i}, after noting that all fibres are equivalent.

Second, we view $\Conf(2,M_S\times J\times I)$ as a bundle over the interior of $M_S\times J\times I$ by mapping the pair $(p_1,p_2)$ to $p_1$, with subbundles given by the open subsets $\Conf_+(2,M_S\times J\times I)$ and $\Conf_-(2,M_S\times J\times I)$ as well as $\Conf_+(2,M_S\times J\times I)\cap \Conf_-(2,M_S\times J\times I)$. Replacing all of these by their fibres over the point $\ast \times \{0\}\times \{t_1\}$, we obtain a square of $r$-cubes
\begin{equation}\label{eq: last cube}\begin{tikzcd}
 \interior(M_\bullet\times J\times I)\backslash (\ast\times J\times \{t_1\})\rar\dar &  \interior(M_\bullet\times J\times I)\backslash (\ast\times (-1,0]\times \{t_1\})\dar\\
  \interior(M_\bullet\times J\times I)\backslash (\ast\times [0,1)\times \{t_1\})\rar &  \interior(M_\bullet\times J\times I)\backslash (\ast \times \{0\}\times \{t_1\}) ,
\end{tikzcd}\end{equation}
mapping to (\ref{middle cube}) by an $\infty$-cartesian map, so it suffices to show that \eqref{eq: last cube} is $(2d-1+\Sigma)$-cartesian by \cref{lem:cubelemma} \ref{cubelemma:iii}. That the map from (\ref{eq: last cube}) to (\ref{middle cube}) is indeed $\infty$-cartesian follows as for the map from (\ref{eq: square of cubes}) to (\ref{middle cube}); this time using that all base spaces are the same. 

To see that it is, we use the multirelative Blakers--Massey \cref{thm:blakers-massey}. The $(r+2)$-cube \eqref{eq: last cube} is strongly cocartesian because it is made by cutting out $(r+2)$ pairwise disjoint submanifolds from $\interior(M\times J\times I) \backslash (\ast \times \{0\}\times t_1)$ that are closed as subspaces. Two of these, $\ast\times (-1,0)\times \{t_1\}$ and  $\ast\times (0,1)\times \{t_1\}$, are $1$-dimensional, so the inclusions of their complements are $d$-connected by general position. The other $r$ inclusions are up to equivalence of the form $N\backslash R \hookrightarrow N$ for a submanifold $R$ with handle dimension $q_i+2$ relative to $R\cap \partial N$ (set $N=(M_{\varnothing}\times J\times I)\backslash(\ast\times J\times\{t_1\})$ and $R=Q_i\times J\times I$), so they are $(d-q_i-1)$-connected, again by general position. \cref{thm:blakers-massey} thus gives the degree of cartesianness of the $(r+2)$-cube \eqref{eq: last cube} as \[\textstyle{(1-(r+2)+d+d+\sum_{i=1}^{r}(d-q_j-1))=(2d-1+\sum_{i=1}^{r}(d-q_j-2))}\]
as desired.

\subsection{Digression}\label{sec:calculus}

We pause the proof of \cref{thm:main-relative} for a moment to comment on aspects of the proof of the case of a point from the previous subsection. None of them are necessary for the proof, so this subsection may be skipped on first reading.

\subsubsection{The space $\CE^{\{t_1,t_2\}}(\ast, M)$}The final two steps in the proof of the case of a point featured the space $\CE^{\{t_1,t_2\}}(\ast, M)$ of concordance maps $I\ra M\times I$ that are injective on $\{t_1,t_2\}$. Although it is not necessary for the proofs, it is worth pointing out the homotopy type of this space:

\begin{lem}\label{lem:SOmega}
There is an equivalence
\[\CE^{\{t_1,t_2\}}(\ast, M)\simeq S^{T_\ast M}\wedge\Omega_\ast (M)_+\]
where $\Omega_\ast (M)$ is the space of loops in $M$ based at $\ast\in M$, the subscript $+$ adds an disjoint base point, and $S^{T_\ast M}\cong S^d$ is the one-point compactification of the tangent space at $\ast\in M$.
\end{lem}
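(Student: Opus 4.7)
The plan is to realise $\CE^{\{t_1,t_2\}}(\ast,M)$ as the complement in the contractible space $\CF(\ast,M)$ (\cref{lem:CF-contractible}) of the closed subspace
\[
	D \coloneq \{f \in \CF(\ast,M) : f(t_1) = f(t_2)\},
\]
and then to apply a Pontryagin--Thom argument. The subspace $D$ is cut out as the preimage of the diagonal $\Delta \subset (M\times I)^2$ under the submersive evaluation $\CF(\ast,M) \to (M\times I)^2$, $f \mapsto (f(t_1),f(t_2))$, and hence admits a tubular neighbourhood in $\CF(\ast,M)$ whose normal bundle $\nu$ is the pullback of the normal bundle of $\Delta$, namely $T(M\times I) \cong TM\oplus \underline{\bfR}$, along the evaluation $D \to M\times I$, $f\mapsto p\coloneq f(t_1)=f(t_2)$.

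First I would identify the homotopy type of $D$. Decomposing $f \in D$ at $t_1,t_2$ into three pieces---a path $(\ast,0)\rightsquigarrow p$, a loop at $p$, and a path $p\rightsquigarrow M\times\{1\}$---exhibits $D$ as the homotopy triple pullback over $M\times I$ of the based path space $P_{(\ast,0)}(M\times I)$, the free loop space $\mathcal{L}(M\times I)$, and the path space $P^{M\times\{1\}}(M\times I)$. Using that $P_{(\ast,0)}(M\times I)$ is contractible and the evaluation $P^{M\times\{1\}}(M\times I)\to M\times I$ is a weak equivalence (since the inclusion $M\times\{1\}\hookrightarrow M\times I$ is), this triple pullback collapses to the homotopy fibre of $\mathcal{L}(M\times I)\to M\times I$ at $(\ast,0)$, namely $\Omega_{(\ast,0)}(M\times I)\simeq \Omega_\ast M$. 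Moreover, under this identification the evaluation $D\to M\times I$ becomes null-homotopic (contracting $P_{(\ast,0)}(M\times I)$ to the constant path sends the variable endpoint $p$ to $\ast$), so $\nu$ is trivialisable as the constant bundle with fibre $T_\ast M\oplus \bfR$ on $D\simeq\Omega_\ast M$.

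Next, excision along the tubular neighbourhood, together with contractibility of $\CF(\ast,M)$, turns the cofibre sequence $\CE^{\{t_1,t_2\}}\hookrightarrow \CF(\ast,M)\to \CF(\ast,M)/\CE^{\{t_1,t_2\}}$ into
\[
	\Sigma\,\CE^{\{t_1,t_2\}}(\ast,M)\simeq D^\nu \simeq \Omega_\ast(M)_+\wedge S^{T_\ast M\oplus \bfR}\simeq \Sigma\bigl(S^{T_\ast M}\wedge \Omega_\ast(M)_+\bigr).
\]
The unsuspended equivalence is then obtained either by refining the argument---performing the tubular-neighbourhood deformation retract only along the $TM$-summand of $\nu$, with the $\underline{\bfR}$-summand accounted for geometrically by the $I$-direction of concordance maps, which supplies an explicit desuspension---or by constructing an explicit map from $\CE^{\{t_1,t_2\}}(\ast,M)$ to $S^{T_\ast M}\wedge \Omega_\ast(M)_+$ out of the normalised direction $(f(t_2)-f(t_1))\in T_{f(t_1)}(M\times I)\setminus 0$, parallel-transported back to $T_\ast M \oplus \bfR$ along $f|_{[0,t_1]}$, together with the loop at $\ast$ obtained by conjugating $f|_{[t_1,t_2]}$ by $f|_{[0,t_1]}$.

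The main obstacle is making the Pontryagin--Thom / tubular-neighbourhood argument rigorous in the infinite-dimensional setting of $\CF(\ast,M)$, and producing the honest (unsuspended) equivalence rather than merely its single suspension.
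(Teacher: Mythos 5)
Your route is genuinely different from the paper's, and it ends with a real gap that you yourself flag. The paper avoids Pontryagin--Thom entirely: it first rewrites $\CE^{\{t_1,t_2\}}(\ast,M)$ up to equivalence, via two restriction maps along $[0,t_1]\subset[0,t_2]\subset I$ (the first a fibration with contractible fibres, the second a fibration with contractible base), as the space of paths in $M\times[0,1)$ starting at $(\ast,t_1)$ and \emph{not} ending at $(\ast,t_1)$. This is the homotopy fibre at $\ast$ of the inclusion $(M\times[0,1))\setminus\{(\ast,t_1)\}\hookrightarrow M\times[0,1)$, i.e.\ of the retraction $S^{T_\ast M}\vee M\ra M$, whose fibre is the half-smash $S^{T_\ast M}\wedge(\Omega_\ast M)_+$. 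That zig-zag yields the unsuspended equivalence directly and is visibly natural in codimension-zero inclusions of $M$, which is exactly what the surrounding discussion in the paper needs.

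Your Thom-space argument is a reasonable way to see the \emph{suspended} statement: your identification $D\simeq\Omega_\ast M$ via the triple pullback and the nullhomotopy of the evaluation $D\to M\times I$ are both correct, and (granting a tubular neighbourhood for $D\subset\CF(\ast,M)$, which is plausible but itself requires an argument in this infinite-dimensional setting) you obtain $\Sigma\,\CE^{\{t_1,t_2\}}(\ast,M)\simeq D^\nu\simeq\Sigma\bigl(S^{T_\ast M}\wedge(\Omega_\ast M)_+\bigr)$. But this does not prove the lemma: a single-suspension equivalence does not imply an equivalence even between simply connected spaces unless it is induced by an actual map. Your two proposed fixes are too vague to close this. ``Retracting only along the $TM$-summand'' doesn't obviously produce a comparison map, and in the explicit-map sketch the expression ``$f(t_2)-f(t_1)$'' has no intrinsic meaning for points of $M\times I$; one would need a metric and exponential map, a cut-off collapsing far-apart pairs to the basepoint of $S^{T_\ast M}$, a parallel-transport convention, and then a proof that the resulting map is continuous, well defined into the half-smash (which quotients $\{\ast\}\times\Omega_\ast M$ but not $S^{T_\ast M}\times\{\mathrm{const}\}$), and a weak equivalence. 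None of that is supplied. So the idea is interesting and the reduction to the suspension is correct, but as written this is a gap rather than a different proof, and the paper's restriction-fibration argument is the more economical way to land the unsuspended statement.
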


\begin{proof} It suffices to produce an equivalence between $\CE^{\{t_1,t_2\}}(\ast,M)$ and the homotopy fibre at $\ast\in M$ of the canonical retraction $S^{T_\ast M}\vee M\to M$. To do so, we first recall that $\CE^{\{t_1,t_2\}}(\ast,M)$ for $t_1<t_2$ in the interior of $I$ is the space of smooth maps $f \colon I\to M\times I$ with $f(t)=(\ast,t)$ in a neighbourhood of $0$, $f(1)\in M\times 1$, and $f(t_1)\neq f(t_2)$. We now perform a sequence of alterations to $\CE^{\{t_1,t_2\}}(\ast,M)$ without affecting its homotopy type. Firstly, we may replace ``smooth'' by ``continuous'' in the definition. Secondly, we consider the restriction map
\[
\CE^{\{ t_1,t_2\}}(\ast ,M)\lra \CE^{\{ t_1,t_2\}}(\ast ,M)_{t_2},
\]
where $\CE^{\{ t_1,t_2\}}(\ast ,M)_{t_2}$ is the space of maps $f\colon [0,t_2]\to M\times [0,1)$ such that $f(t)=(\ast,t)$ for all $t$ in a neighbourhood of $0$ and such that $f(t_1)\neq f(t_2)$. This is a fibration whose fibres are contractible, therefore an equivalence. Thirdly, we use the restriction map
\[
\CE^{\{ t_1,t_2\}}(\ast ,M)_{t_2}\lra \CE^{\{ t_1,t_2\}}(\ast ,M)_{t_1} ,
\]
where $\CE^{\{ t_1,t_2\}}(\ast ,M)_{t_1}$ is the space of maps $f \colon [0,t_1]\to M\times [0,1)$ such that $f(t)=(\ast,t)$ for all $t$ in a neighborhood of $0$. This is a fibration with contractible base, so its fibre over the map $t\mapsto (\ast,t)$ is equivalent to $\CE^{\{ t_1,t_2\}}(\ast ,M)_{t_2}$. This fibre agrees with the space of all paths in $M\times [0,1)$ starting at $(\ast,t_1)$ and not ending at $(\ast,t_1)$. This is the homotopy fibre at $(\ast,t_1)$ of the inclusion 
$(M\times [0,1))\backslash \{(\ast,t_1)\}\subset M\times [0,1)$ which is in turn equivalent to the homotopy fibre at $\ast$ of the retraction $S^{T_\ast M}\vee M\to M$.
\end{proof}

\subsubsection{A variation of Step \ref{step:blakers-massey}}
There is an alternative way to carry out the final Step \ref{step:blakers-massey} in the proof of \cref{thm:main-relative} for a point. It is based on the observation that the equivalence produced in the proof of \cref{lem:SOmega} is natural in codimension $0$ embeddings of $M$, so it extends to an equivalence of cubes $\CE^{\{t_1,t_2\}}(\ast, M_\bullet)\simeq S^{T_\ast M}\wedge\Omega_\ast (M_\bullet)_+$. Up to this equivalence, the final square \eqref{square with rho2} in Step \ref{step:partial} may thus be written as
\begin{equation}\label{new square with rho}
	\begin{tikzcd}\CbE(\ast,M_\bullet) \rar{\rho} \dar[swap]{\pi} & \Omega \CbE (\ast \times \{0\},M_\bullet\times J) \dar{\Omega\pi} \\
	\Omega^2\Sigma^d\Omega(M_\bullet)_+ \rar & \Omega^3\Sigma^{d+1}\Omega(M_\bullet)_+. \end{tikzcd}
\end{equation}
In these terms, the task in Step \ref{step:blakers-massey} was to show that the lower horizontal map in the square is $(2d-3+\Sigma)$-cartesian. The description of the bottom entries of this square suggests the following strategy: first prove that the bottom map is homotopic as maps of cubes to the map obtained by twice looping the loop-suspension map $X\to \Omega\Sigma X$ with $X=\Sigma^d\Omega(M_\bullet)_+$ and then show that the bottom map is sufficiently cartesian by a multirelative version of Freudenthal's suspension theorem. This strategy can indeed be implemented: to achieve the first step, one uses \cref{lem: left-right} and the homotopy that it provides, and for the second step one uses that if $X_\bullet$ is a strongly cocartesian $r$-cube of based spaces and if $k_i$ is the connectivity of the map $X_{\varnothing}\to X_{\{i\}}$, then the loop-suspension map of cubes $\Sigma^d \Omega(X_\bullet)_+\to \Omega \Sigma \Sigma^d \Omega(X_\bullet)_+
$ is $(2d-1+\sum_{i=1}^r(k_i-1))$-cartesian. This can be shown by an application of a more flexible version of the multirelative Blakers-Massey theorem \cite[Theorem 2.5]{GoodwillieCalculusII}.

\subsubsection{$\CE$ versus $\CbE$}
The main reason we used $\CbE(\ast,M)$ instead of $\CE(\ast,M)$ in Step \ref{step:partial} is the fact that the space $\CbE^{A_S}(\ast,M)$ is contractible for $\varnothing\neq S\subsetneq \ul{3}$, which allowed for a simple description of $\holim_{\varnothing\neq S\subseteq \ul{3}}\CbE^{A_S}(\ast,M)$, namely as $\Omega^2\CE^{\{t_1,t_2\}}(\ast,M)$ which is by \cref{lem:SOmega} equivalent to $\Omega^2(S^d\wedge \Omega(M)_+)$. The corresponding homotopy limit of $\CE^{A_S}(\ast,M)$ can be seen to be equivalent to the homotopy fibre of the inclusion $S^{T_\ast M}\to S^{T_\ast M}\wedge \Omega(M)_+$. This leads to a commutative diagram
\begin{equation}\label{equ:big-holim-diagram}\begin{tikzcd}[ar symbol/.style = {draw=none,"\textstyle#1" description,sloped},
  equivalence/.style = {ar symbol={\simeq}}, column sep=0.5cm]\CbE(\ast,M) \rar \dar & \CE(\ast,M) \rar \dar & \CI(\ast,M) \dar{\simeq} \\
\underset{\varnothing \neq S\subseteq \ul{3}}\holim\,\CbE^{A_S}(\ast,M) \rar \arrow[d,equivalence,start anchor={[xshift=-5pt]}] & \underset{\varnothing \neq S\subseteq \ul{3}}\holim\,\CE^{A_S}(\ast,M) \rar \arrow[d,equivalence,start anchor={[xshift=-5pt]}] & \underset{\varnothing \neq S\subseteq \ul{3}}\holim\,\CI^{A_S}(\ast,M) \arrow[d,equivalence,start anchor={[xshift=-5pt]}] \\[-0.4cm]
	\Omega^2 (S^{T_\ast M}\wedge \Omega(M)_+) & \Omega\hofib(S^{T_\ast M}\to S^{T_\ast M}\wedge \Omega(M)_+) & \Omega S^{T_\ast M} \end{tikzcd}\end{equation}
whose bottom row can, via the indicated equivalences, be identified with the evident homotopy fibre sequence relating the three spaces involved. In particular, since the inclusion $S^{T_\ast M}\to S^{T_\ast M}\wedge \Omega(M)_+$ has a left inverse, the map $\Omega\hofib(S^{T_\ast M}\to S^{T_\ast M}\wedge \Omega(M)_+)\ra\Omega S^{T_\ast M}$ is nullhomotopic, so the same holds for the map $\CE(\ast,M)\ra \CI(\ast,M)$.

The fact that this map is nullhomotopic actually holds more generally: the map $\CE(P,M)\to \CI(P,M)$ is nullhomotopic whenever the handle dimension of $\partial M \cap P \subset P$ is less than $d$, that is, whenever Smale-Hirsch theory applies. One proof of this fact goes by mapping $\CE(P,M)$ and $\CI(P,M)$ compatibly to spaces of sections relative to $P\cap\partial M$ of bundles over $P$ whose fibre over $\ast\in P$ are the spaces $\CE(\ast,M)$ and $\CI(\ast,M)$ respectively. By the discussion in \cref{sec:immersions} the map from $\CI(P,M)$ to the section space with fibres $\CI(\ast,M)$ is an equivalence, so to provide the claimed nullhomotopy, it suffices to produce a nullhomotopy of $\CE(\ast,M)\ra \CI(\ast,M)$ that varies continuously with $\ast\in P$. The nullhomotopy discussed below \eqref{equ:big-holim-diagram} has this property. When Smale-Hirsch theory does not apply, e.g.\,for concordance diffeomorphisms, this argument still shows that the map from $\CE(P,M)$ to the section space over $P$ with fibre $\CI(\ast,M)\simeq \Omega S^{T_\ast M}$ is nullhomotopic.

\begin{rem}This has an application to the map $\Diff_\partial(D^d ) \ra \Omega^{d} \SO(d)$ induced by taking derivatives. Namely, writing $D^d\cong D^{d-1}\times I$, this map fits into a commutative square
	\[\begin{tikzcd} \Diff_\partial(D^{d-1} \times I) \rar \dar{\inc} & \Omega^{d} \SO(d) \dar{\Omega\pr} \\
	\C(D^{d-1}) \rar & \Omega^{d} S^{d-1} \end{tikzcd}\]
whose bottom map is nullhomotopic by the discussion above, so we obtain a lift of the top map to a map of the form
	$\Diff_\partial(D^{d-1} \times I) \ra \Omega^{d} \SO(d-1)$.
\end{rem}

\subsubsection{Relation to ``Calculus I.''}
There is some overlap between the arguments in \cref{section:stability-point} above and those in the final section of \cite{GoodwillieCalculusI}. It is worth clarifying the situation.

In Section 3 of loc.cit.~Goodwillie in effect proved the $P=\ast$ case of the stability theorem \cref{thm:main}, by giving a description of the homotopy type of $\CE(\ast,M)$ and $\CE(\ast\times J,M\times J)$ in a range up to $2d-5$ (and more generally of $\CE(\ast\times D^p,M\times D^p)$). Let us recall how that went. The key in \cite[Section 3]{GoodwillieCalculusI} was a map
\begin{equation}\label{trace}
\CE(\ast,M)\lra \Omega^2Q(\Sigma^{d}\Omega(M)),
\end{equation}
which was shown to be $(2d-5)$-connected (see p.~19 and Lemma 3.16 loc.cit.). Note that the target receives a $(2d-4)$-connected map from $\Omega(\hofib(S^d\ra S^d\wedge \Omega(M)_+))$, so in this range, the map \eqref{trace} has the same form as the middle vertical map in \eqref{equ:big-holim-diagram}. Similarly, there are maps
\begin{equation}\label{trace disk}
	\CE(D^p,M)\lra \Omega^{2} Q(\Sigma^{d-p}\Omega(M))
\end{equation}
which are shown to be at least $(2d-2p-5)$-connected for $p\le d-3$ by induction on $p$, using the delooping trick (see the proof of Lemma 3.19 loc.cit.). Moreover, these maps are compatible with stabilisation in the evident sense, so in particular this implies (a) that we have a $(2d-5)$-connected map
\[\hocolim_p\,\CE(D^p,M\times D^p)\lra \Omega^{2} Q(\Sigma^{d}\Omega(M))\]
and (b) that the stabilisation map $\CE(\ast,M)\ra \CE(\ast\times J,M\times J)$ is $(2d-6)$-connected.

In \cite{GoodwillieCalculusI} these ideas were used to compute the first derivative of the stable concordance diffeomorphism functor in the ``homotopy calculus'' sense. The map \eqref{trace}, or the idea behind it, led to another map
\begin{equation}\label{global trace}
	\C(M)\lra \Omega^2 Q(\Lambda M/M),
\end{equation}
where $\Lambda M/M$ is the (homotopy) cofibre of the inclusion $M\ra \Lambda M=\Map(S^1,M)$ of $M$ into the free loop space of $M$ as the constant loops (see p.~24 of loc.cit.). It is also compatible with stabilisation, so that it gives a map
\[\hocolim_p\,\C(M\times D^p)\lra \Omega^2 Q(\Lambda M/M)\]
which eventually leads to a computation of the aforementioned first derivative. 

\begin{rem}\ 
\begin{enumerate}
\item In \cite{GoodwillieCalculusI} the maps \eqref{trace} and \eqref{global trace} were defined using a certain ``cobordism'' model for the targets (see p.\,19-20 and p.\,23-24 loc.cit.), whereas we constructed the related maps vertical maps in \eqref{equ:big-holim-diagram} by cutting holes in the interval $I$.
\item There was an oversight in \cite{GoodwillieCalculusI}: the compatibility of \eqref{trace disk} and \eqref{global trace}  with stabilisation was never explained. Goodwillie would like to repair that oversight by pointing out that the definition of these maps using the ``cobordism'' model is rather obviously compatible with stabilisation. In fact, if a family of concordance embeddings of $P$ in $M$, parametrised by the simplex $\Delta^k$, satisfies the transversality condition ``Hypothesis 3.18'' of loc.cit.~then the resulting family of concordance embeddings of $P\times J$ in $M\times J$ satisfies the same condition, and the resulting $k$-simplex in the cobordism space is essentially unchanged.
\item Although the cobordism approach was convenient for establishing compatibility with stabilisation (bypassing the need for the homotopy we constructed above using \cref{lem: left-right}) and was also adequate for establishing the absolute $(2d-5)$-connectedness of stabilisation in the point case, it may be difficult to use it to obtain the multirelative statement. In our proof, this multirelative statement in the $P=\ast$ case  is needed even for the absolute statement in the $P=D^p$ case.
\end{enumerate}
\end{rem}

\subsection{The case of a disc}\label{sec:proof-disc}
Returning to the proof of \cref{thm:main-relative}, we now give the argument in the case when $P$  is a $p$-disc $D^p\subset M$ with $D^p\cap \partial M=\partial D^p$, using the case $p=0$ from \cref{section:stability-point}. The assertion is that the stability $(r+1)$-cube 
\[\sCE(D^p,M_\bullet)=\big(\CE(D^p,M_\bullet)\xrightarrow{\sigma} \CE(D^p\times J,M _\bullet \times J)\big)\]
is $(2d-p-5+\Sigma)$-cartesian if $d-p\ge3$ and $d-q_i\ge3$ for all $i$. The proof is by double induction, using the induction hypothesis
\medskip
\begin{itemize}
	\item[$(\bfH_{p,k})$]For $D^p\subset M$ with $D^p\cap M=\partial D^p$, the cube $\sCE(D^p,M_\bullet)$ is $(k+d-p-3+\Sigma)$-cartesian.
\end{itemize}
\medskip
The goal is to prove $(\bfH_{p,d-2})$ for $d-p\ge 3$. From the case of a point considered in the previous subsection, we have $(\bfH_{0,d-2})$ for $d\ge 3$. Using \cref{lem:cubelemma} \ref{cubelemma:ii}, we also have $(\bfH_{p,0})$ for $d-p\ge3$, since both of the $r$-cubes $\CE(D^p,M_\bullet)$ and $\CE(D^p\times J,M_\bullet\times J)$ are $(d-p-2+\Sigma)$-cartesian by \cref{thm:multipl-disjunction}. The induction will be completed by showing that the statement $(\bfH_{p,k})$ follows from $(\bfH_{p,k-1})$ and $(\bfH_{p-1,k})$ if $d-p\ge3$. To do so, note that the scanning map from \cref{sec:delooping} is compatible with stabilisation, so that we have a scanning map of $(k+1)$-cubes
	\begin{equation}\label{equ:scanning-map-with-s} \sCE(D^p,M_\bullet) \xlra{\tau} \Omega s\CE(D^{p-1},M_\bullet).\end{equation}
By $(\bfH_{p-1,k})$ the cube $s\CE(D^{p-1},M_\bullet)$ is $(k+d-(p-1)-3+\Sigma)$-cartesian, so the target of this map is $(k+d-p-3+\Sigma)$-cartesian. To show $(\bfH_{p,k})$, i.e.\,that the source of \eqref{equ:scanning-map-with-s} is also $(k+d-p-3+\Sigma)$-cartesian, it suffices by \cref{lem:cubelemma} \ref{cubelemma:i} to show that the entire $(k+2)$-cube \eqref{equ:scanning-map-with-s} is $(k+d-p-3+\Sigma)$-cartesian. To this end, note that our identification of the scanning map with (\ref{eqref:inclusion-delooping}) is compatible with stabilisation: the cube \eqref{equ:scanning-map-with-s} is equivalent to the inclusion 
\[\sCE\big(D^p_{[\epsilon,1]},(M\backslash T)_\bullet\backslash D_{[-1,-\epsilon]}^p)\overset{\subset}\lra \sCE\big(D^p_{[\epsilon,1]},(M\backslash T))\]
of $(r+1)$-cubes. Statement $(\bfH_{p,k-1})$ shows that this $(r+2)$-cube is $(k-1+d-p-3+\Sigma+(d-p-2))$-cartesian. (Here one sets $\smash{Q_{r+1}\coloneq D_{[-1,-\epsilon]}^p}$.) Therefore \eqref{equ:scanning-map-with-s} is $(k-1+d-p-3+\Sigma+(d-p-2))$-cartesian, so in particular $(k+d-p-3+\Sigma)$-cartesian, since $d-p-3\ge 0$. 

\subsection{The general case}\label{sec:proof-general}
Finally, we establish the general case of \cref{thm:main-relative}: that the stability $(k+1)$-cube  $\sCE(P,M_\bullet)$ is $(2d-p-5+\Sigma)$-cartesian if $d-p\ge3$ and $d-q_i\ge3$ for all $i$. Recall that $p$ was defined as the handle dimension of the inclusion $\partial M\cap P\subset P$, so there exists a handle decomposition of $P$ relative to a closed collar on $\partial M\cap P$ with handles of index at most $d-3$. The proof will be an induction over the number of handles $k$ of such a decomposition. 

If $k=0$, then $P$ is a closed collar on $P\cap\partial M$, so $\sCE(P,M\backslash Q_\bullet)$ is objectwise contractible by \cref{lem:collar} and thus in particular $\infty$-cartesian by \cref{cor:cube-of-cubes} \ref{enum:constant-cubes}. To go from $k-1$ to $k$, suppose that $P=P'\cup H$ where $H=D^i\times D^{\dim(P)-i}$ is a handle of index $i\le p$ disjoint from $\partial M\cap P$, and assume that the claim holds for $P'$. The base of the restriction map $\sCE(P,M\backslash Q_\bullet)\ra \sCE(P',M\backslash Q_\bullet)$ satisfies the cartesianness bound of the claim by the induction hypothesis, so using \cref{lem:cubelemma} \ref{cubelemma:i} and \ref{cubelemma:iii} it suffices that the $(k+1)$-cubes of homotopy fibres over all basepoints satisfy the cartesianness bound of the claim. In fact, since the $\CE(P',M_\varnothing)$ is connected by \cref{thm:hudson}, it suffices to take homotopy fibres of the inclusion. As a result of the parameterised isotopy extension theorem, this $(k+1)$-cube of homotopy fibres over the inclusion is equivalent to $\sCE(H,M'_\bullet)$ where $M'\subset M$ is the complement of an open tubular neighbourhood of $P'$ disjoint from the $Q_i$'s and is so that $H\cap\partial M'=(\partial D^i)\times D^{\dim(P)-i}$. Now note that $H$ is a closed disc bundle as in \cref{lem:disc-bundle}, so the restriction map $\sCE(H,M'_\bullet)\ra \sCE(D^i,M'_\bullet)$ along $D^i\times \{0\}\subset D^i\times D^{\dim(P)-i}=H$ is a objectwise equivalence. But $\sCE(D^i,M'_\bullet)$ satisfies the cartesianness bound of the claim by the case treated in the previous subsection, so the proof is complete.

\section{Applications to concordance diffeomorphisms and homeomorphisms}\label{sec:applications}In this section we explain some applications of \cref{thm:main} to spaces of concordance diffeomorphisms (see \cref{sec:smooth-conc}) and of concordance homeomorphisms (see \cref{sec:topological-conc}).

\subsection{Smooth concordances} \label{sec:smooth-conc}Recall from the introduction that $\C(M)$ for a compact smooth $d$-manifold is the topological group of diffeomorphisms of $M \times I \to M \times I$ that agree with identity on a neighbourhood of $M \times \{0\} \cup \partial M \times I$, equipped with the smooth topology. Note that $\C(M)=\CE(M,M)$ since $M$ is compact. We begin with an invariance result for $\C(M\times J)/\C(M)$ with respect to attaching certain handles.

\begin{thm}\label{thm:handle-invariance}Let $M$ and $N$ be compact $d$-manifolds such that $N$ is obtained from $M$ by attaching finitely many handles of index $\ge k$. If $k\ge3$ then the map
\[\C(M\times J)/\C(M)\lra \C(N\times J)/\C(N)\]
 is $(d+k-5)$-connected.
\end{thm}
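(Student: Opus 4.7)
The plan is to reduce the statement to attaching a single handle and then to set up a commutative ladder of fibre sequences to which \cref{thm:main} can be applied. First, by induction on the number of handles, and using that a composition of $(d+k-5)$-connected maps is $(d+k-5)$-connected, it suffices to treat the case $N=M\cup H$ of a single handle $H=D^i\times D^{d-i}$ of index $i$ satisfying $i\ge k\ge 3$. In that case, I would use the cocore $K\coloneq\{0\}\times D^{d-i}\subset H\subset N$, a properly embedded $(d-i)$-disc meeting $\partial N$ transversely in $\partial K$ whose complement in $N$ deformation retracts onto $M$. Since $K\times J\subset N\times J$ likewise satisfies $(N\times J)\setminus\nu(K\times J)\cong M\times J$, the parametrised isotopy extension theorem produces a commutative ladder of fibre sequences
\[
\begin{tikzcd}
\C(M) \rar \dar{\sigma} & \C(N) \rar \dar{\sigma} & \CE(K,N) \dar{\sigma} \\
\C(M\times J) \rar & \C(N\times J) \rar & \CE(K\times J, N\times J),
\end{tikzcd}
\]
whose three vertical maps are stabilisation maps. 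Commutativity is expected to follow because the formula \eqref{equ:stab-map-formula} defining $\sigma$ depends on the source variable $p$ only through the single-point evaluation $e(p,r)$, so $\sigma$ commutes both with restriction to a submanifold of the source and with extension by the identity to a larger ambient manifold.

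The cocore has handle dimension $d-i$, and the hypothesis $d-(d-i)=i\ge 3$ of \cref{thm:main} is satisfied, so the right-hand column of the ladder is $\bigl(2d-(d-i)-5\bigr)=(d+i-5)$-connected. Taking vertical homotopy fibres in the ladder yields a new fibre sequence
\[
F_M\lra F_N\lra F_{\CE},
\]
where the three terms are the homotopy fibres of $\sigma$ on $\C(M)$, $\C(N)$, and $\CE(K,N)$, respectively. Since $F_{\CE}$ is $(d+i-5)$-connected, the long exact sequence of this fibre sequence shows that $F_M\to F_N$ is $(d+i-6)$-connected. Using the standard identification $\pi_j(F_M)\cong\pi_{j+1}(\C(M\times J),\C(M))$ and its analogue for $N$, the induced map $\C(M\times J)/\C(M)\to \C(N\times J)/\C(N)$ is then $(d+i-5)$-connected, and hence at least $(d+k-5)$-connected since $i\ge k$.

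The main technical point to verify is that the ladder above is genuinely commutative, i.e., that the stabilisation map commutes both with restriction to the cocore $K$ and with extension by the identity across the handle. Both compatibilities reduce to a straightforward but tedious bookkeeping with the explicit formula for $\sigma$, and without them the whole argument breaks down. Once this is settled, \cref{thm:main} combined with the long exact sequence of a fibration does the rest of the work.
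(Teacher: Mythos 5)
Your proposal is essentially the same argument as the paper's: reduce to a single handle of index $i\ge k$ by induction, use the parametrised isotopy extension fibre sequence to identify the relevant relative term with concordance embeddings of the cocore, and apply \cref{thm:main} to that cocore. The paper packages the bookkeeping slightly differently by passing to the delooped square of $\BC$'s and trading vertical for horizontal homotopy fibres (and explicitly cites \cref{lem:disc-bundle} to go from the handle to its cocore and \cref{thm:hudson} to control path components), whereas you work with a ladder of fibre sequences on the space level; the content is the same. One small slip in your write-up: you assert ``$F_{\CE}$ is $(d+i-5)$-connected,'' but $F_{\CE}$ is the homotopy fibre of a $(d+i-5)$-connected map and is therefore only $(d+i-6)$-connected; the downstream conclusion that $F_M\to F_N$ is $(d+i-6)$-connected is nonetheless correct, so this is a phrasing error rather than a gap.
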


\begin{proof}The map in consideration is the induced map on vertical homotopy fibres of
	\[\begin{tikzcd}
	\BC(M)\rar\dar &\dar\BC(N)\dar\\
	\BC(M\times J)\rar&\BC(N\times  J)
	\end{tikzcd}\]
so we may equivalently show that the map on horizontal homotopy fibres has the claimed connectivity. By induction over the number of handles, it suffices to prove the case $N=M\cup H$ where $H$ is a single $k$-handle with $k\ge3$. In this case, as a result of the parametrised isotopy extension theorem together with \cref{lem:disc-bundle} and \cref{thm:hudson}, the map on horizontal homotopy fibres agrees up to equivalence with $\CE(D^{d-k},M\cup H)\ra \CE(D^{d-k}\times J,(M\cup H)\times  J)$ where $D^{d-k}\subset H$ is a cocore of the $k$-handle. The latter map is $(d+k-5)$-connected by \cref{thm:main}, so the claim follows.
\end{proof}

\begin{cor}Let $M\hookrightarrow N$ be a $k$-connected embedding between compact $d$-manifolds. If
\begin{enumerate}
\item\label{enum:ctd-i}  the inclusion $\partial M\subset M$ induces an equivalence of fundamental groupoids and
\item\label{enum:ctd-ii} $2\le k\le d-4$,
\end{enumerate}
then the map
\[\C(M\times J)/\C(M)\lra \C(N\times J)/\C(N)\]
is $(d+k-4)$-connected.
\end{cor}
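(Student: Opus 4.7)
The plan is to reduce the corollary to Theorem~\ref{thm:handle-invariance}. Specifically, I will show that, under the hypotheses, $N$ is obtained from $M$ by attaching finitely many handles all of index at least $k+1$. Since $k \ge 2$ gives $k+1 \ge 3$, Theorem~\ref{thm:handle-invariance} then applies with its parameter equal to $k+1$, yielding that the map in question is $(d+(k+1)-5) = (d+k-4)$-connected, as claimed.

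To produce such a handle decomposition, assume (as is implicit in the extension-by-identity formulation) that $M \subset \interior(N)$, and set $W \coloneq \overline{N \setminus M}$, so $W$ is a cobordism from $\partial M$ to $\partial N$ and $N = M \cup_{\partial M} W$ is a homotopy pushout. I will verify the two claims that (a) the inclusion $\partial M \hookrightarrow W$ induces an equivalence of fundamental groupoids and (b) the pair $(W, \partial M)$ is $k$-connected. For (a): by van Kampen, the fundamental-groupoid functor preserves the pushout, so hypothesis (i)---that $\partial M \hookrightarrow M$ is a $\pi_{\le 1}$-equivalence---together with the pushout property that pushouts of equivalences along cofibrations are equivalences yields that the parallel inclusion $W \hookrightarrow N$ is also a $\pi_{\le 1}$-equivalence; combining this with the $\pi_{\le 1}$-equivalence $M \hookrightarrow N$ (from $k$-connectedness with $k \ge 2$), a two-out-of-three argument applied to the diagonal $\partial M \to N$ shows that $\partial M \hookrightarrow W$ is a $\pi_{\le 1}$-equivalence. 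For (b): the cofibres of parallel edges of a homotopy pushout agree, so $N/M \simeq W/\partial M$ and hence $H_i(W, \partial M) \cong H_i(N, M) = 0$ for $i \le k$; combined with (a), relative Hurewicz then gives $\pi_i(W, \partial M) = 0$ for $i \le k$. With (a) and (b) in hand, the classical handle-trading theorem (applicable since $d \ge k+4 \ge 6$) furnishes a handle decomposition of $W$ relative to $\partial M$ using only handles of index in $[k+1, d-1]$, completing the reduction.

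The main technical obstacle is the handle-trading step, which, although classical in the tradition of Smale--Wall, must be carried out with fundamental groupoids rather than groups in order to accommodate the possibility that $M$, $N$, $\partial M$, or $W$ is disconnected. The upper bound $k \le d-4$ in hypothesis (ii) ensures enough room for the Whitney tricks underlying handle trading, so that all handles of index $\le k$ can be removed; the lower bound $k \ge 2$ supplies both the $\pi_{\le 1}$-equivalence $M \hookrightarrow N$ needed in step (a) and the $k+1 \ge 3$ requirement of Theorem~\ref{thm:handle-invariance}.
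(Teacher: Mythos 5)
Your proof is correct and follows essentially the same route as the paper's: both reduce to Theorem~\ref{thm:handle-invariance} by passing to the complementary cobordism $W = N \setminus \interior(M)$ and using handle trading to produce a decomposition relative to $\partial M$ with all handles of index $\ge k+1$. The paper's proof is terser---it simply asserts that hypotheses (i) and (ii) imply $\partial M \subset W$ is $k$-connected and cites Wall---whereas you spell out the verification of that $k$-connectedness via the van~Kampen / excision / relative Hurewicz argument, which is exactly the content being invoked.
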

\begin{proof}
We may assume without loss of generality that $e$ lands in the interior of $N$, so that the complement gives a bordism $W\coloneq N\backslash \interior(M)\colon \partial M\leadsto \partial N$. By \cref{thm:handle-invariance}, it suffices to show that $W$ can be built from a closed collar on $\partial M$ by attaching handles of index $>k$. This follows from handle trading  (see e.g.\,\cite[Theorem 3]{WallConnectivity}) using that the assumptions \ref{enum:ctd-i} and \ref{enum:ctd-ii} imply that the inclusion $\partial M\subset W$ is $k$-connected.
\end{proof}

The next result involves the notion of a tangential $k$-type which we briefly recall. Two $d$-manifolds $M$ and $N$ are said to have the \emph{same tangential $k$-type} for some $k\ge0$ if there exists a space $B$ and factorisations $M\ra B\ra \BO$ and $N\ra B\ra\BO$ of the classifier of the respective stable tangent bundles such that the maps $M\ra B$ and $N\ra B$ are $k$-connected. A codimension $0$ embedding $M\hookrightarrow N$ is an \emph{equivalence on tangential $k$-types} if there exists a space $B$ and a factorisation $N\ra B\ra \BO$ such that the map $N\ra B$ and the composition $M\hookrightarrow N\ra B$ are $k$-connected.

\begin{ex}\ 
\begin{enumerate}
\item Two spin $d$-manifolds have the same tangential $2$-types if their fundamental groupoids are equivalent. An embedding between such manifolds is an equivalence on tangential $2$-types if it induces an equivalence on fundamental groupoids.
\item Any $k$-connected codimension $0$ embedding is an equivalence on tangential $k$-types.
\end{enumerate}
\end{ex}

\begin{thm}\label{thm:tangential}Let $M$ and $N$ be $d$-manifolds and $k$ an integer with $2\le k< d/2$.
\begin{enumerate}
\item\label{enum:i-tangential} If $M$ and $N$ have the same tangential $k$-type, then there exists an equivalence
\[\tau_{\le d+k-5}\big(\C(M\times J)/\C(M)\big)\simeq \tau_{\le d+k-5}\big(\C(N\times J)/\C(N)\big)\]
between the indicated Postnikov truncations.
\item\label{enum:ii-tangential} Assume in addition $k<(d-1)/2$. For any codimension $0$ embedding $M\hookrightarrow N$ that is an equivalence on tangential $k$-types, the induced map
\[(\C(M\times J)/\C(M))\lra (\C(N\times J)/\C(N))\]
is an equivalence on Postnikov ({d+k-5})-truncations.
\end{enumerate}
\end{thm}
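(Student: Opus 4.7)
Plan. Both parts of Theorem~\ref{thm:tangential} reduce to \cref{thm:handle-invariance} via handle-trading arguments that respect the tangential $B$-structure.

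For part (ii), set $W \coloneq \overline{N \setminus \mathrm{int}(e(M))}$, regarded as a bordism from $\partial M$ to $\partial N$. The hypothesis that $e$ is an equivalence on tangential $k$-types amounts to factoring $M \to N \to B$ compatibly, with both $M \to B$ and $N \to B$ being $k$-connected. Comparing homotopy groups through $B$ then forces $\pi_i(M) \to \pi_i(N)$ to be an isomorphism for $i < k$, so the pair $(N,M)$ is $(k-1)$-connected. Since $k < (d-1)/2$ places us in the stable range, Wall's handle-exchange technique applied to $W$ on the $\partial M$-side produces a handle decomposition with all handles of index $\geq k$. For $k \geq 3$, \cref{thm:handle-invariance} directly gives the asserted $(d+k-5)$-connectivity. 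For the edge case $k=2$, the spin datum recorded in the tangential $2$-type provides the framing required in a Kreck-style modified-surgery step that trades each remaining $2$-handle in $W$ for a $3$-handle; \cref{thm:handle-invariance} with minimum index $3$ then gives $(d-2)$-connectivity, which is stronger than the required $(d-3) = (d+k-5)$.

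For part (i), proceed via a common intermediary. Using Kreck's stable classification of manifolds of a given tangential $k$-type, after attaching finitely many handles of middle index (in $[k+1, d-k-1]$) to each of $M$ and $N$, the resulting manifolds $\widetilde M$ and $\widetilde N$ become diffeomorphic. Under the bounds $k+1 \geq 3$ (automatic since $k \geq 2$) and $d-k-1 \geq 3$ (from $k \leq d-4$, which follows from $k < d/2$ for $d \geq 8$), \cref{thm:handle-invariance} applied to each attached handle shows that each such stabilisation is an equivalence on $(d+k-5)$-truncations of $\C(-\times J)/\C(-)$; chaining these with the diffeomorphism $\widetilde M \cong \widetilde N$ yields the asserted equivalence. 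The residual low-dimensional cases, where a stabilising handle has co-index less than $3$, are handled separately by a direct handle-trading argument that invokes the tangential structure more explicitly (paralleling the $k=2$ step in (ii)).

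The main obstacle will be the tangentially-sensitive handle-trading itself. In part (ii) at $k=2$ this requires the full Kreck modified-surgery machinery rather than a purely homotopy-theoretic Wall argument, since the spin structure must be preserved as framing data across each handle exchange. In part (i) the subtlety is twofold: invoking the stable classification in the form needed here (that the stabilising diffeomorphism can be realised using only handles above the required index), and, in the low-dimensional edge cases, showing by hand that the relevant connected-sum operations remain invisible on the $(d+k-5)$-truncation even though the naive handle-index bounds fail.
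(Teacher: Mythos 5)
The paper's own proof is a one-line citation: $\tau_{\le d+k-5}(\C(-\times J)/\C(-))$, viewed as a functor on $d$-manifolds and isotopy classes of codimension-zero embeddings, satisfies the hypothesis of the abstract ``tangential $k$-type invariance for functors'' theorem \cite[Theorem 5.7]{KKDisc} thanks to \cref{thm:handle-invariance}; both parts of \cref{thm:tangential} then follow formally. Your proposal instead tries to re-prove this by hand, via handle-trading and stable diffeomorphism, and both halves of it have genuine gaps.

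In part (ii), you argue that $k$-connectedness of $M\hookrightarrow N$ forces the bordism $W=\overline{N\setminus \interior M}$ to be built from $\partial M$ by handles of index $\geq k$. But handle trading rel $\partial M$ requires the \emph{pair} $(W,\partial M)$ to be $(k-1)$-connected, and excision only gives $H_*(W,\partial M)\cong H_*(N,M)$, not control on $\pi_1(\partial M)\to\pi_1(W)$. This is exactly why the unnumbered corollary preceding \cref{thm:tangential} carries the extra hypothesis that $\partial M\subset M$ is an equivalence of fundamental groupoids; \cref{thm:tangential} has no such hypothesis, so your handle-trading step is unjustified (and your $k=2$ add-on about spin framing is beside the point: trading an index-$i$ handle for an index-$(i+2)$ handle in Wall's argument requires no tangential data, only the $\pi_1$-control that is precisely what is missing). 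In part (i), the claim that attaching middle-index handles makes $M$ and $N$ diffeomorphic is false in general: same tangential $k$-type does \emph{not} imply stable diffeomorphism without a $B$-bordism assumption. For instance $S^8$ and $\mathbf{H}P^2$ are both $3$-connected and spin (hence have the same tangential $2$-type, with $d=8$, $k=2$), yet they have different signatures and so cannot become diffeomorphic after any number of handle attachments of index in $[3,5]$ — but \cref{thm:tangential}(i) does assert an equivalence of $5$-truncations for this pair. Recovering that kind of statement without assuming stable diffeomorphism is precisely what the abstract theorem of \cite{KKDisc} is designed to do, and cannot be short-circuited by the direct stabilisation you sketch.
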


\begin{proof}[Proofs of \cref{thm:tangential} and \cref{cor:relative-concordance}]
\cref{thm:tangential} is a direct consequence of \cite[Theorem 5.7]{KKDisc} applied to $\tau_{\le k+d-5}(\C(-\times J)/\C(-))$, considered as a functor from the category of compact $d$-manifolds and isotopy classes of codimension zero embeddings to the homotopy category of spaces. The assumption of the theorem is satisfied by \cref{thm:handle-invariance}. 

\cref{cor:relative-concordance} almost follows from the case $M=D^d$ of \cref{thm:tangential} \ref{enum:ii-tangential} except that we do not require the additional assumption $k<(d-1)/2$. That this assumption is not necessary in the special case $M=D^d$ follows from the second part of \cite[Theorem 5.7]{KKDisc}\end{proof}

\subsection{Topological concordances}\label{sec:topological-conc}
We write $\C^{\TOP}(M)$ for the topological group of \emph{topological} concordances by which we mean the space of homeomorphisms of $M\times I$ that are the identity in a neighbourhood of $M\times\{0\}\cup\partial M\times I$, equipped with the compact-open topology. The definition of the stabilisation map makes equal sense for topological concordances.

\begin{prop}\label{prop:smooth-vs-topological-stability}Let $M$ be a compact smooth $d$-manifold with $d\ge5$. If 
\[\BC(D^d)\lra\BC(D^{d}\times J)\]
is $k$-connected for some $k\ge0$, then for any smooth compact $d$-manifold $M$ the square
	\[\begin{tikzcd}
	\BC(M)\rar\dar&\BC^{\TOP}(M)\dar\\
	\BC(M\times J)\rar&\BC^{\TOP}(M\times J)
	\end{tikzcd}\]
is $k$-cartesian. The same implication holds rationally or $p$-locally for any prime $p$.
\end{prop}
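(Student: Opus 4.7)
The plan is to reduce the general case to $M=D^d$ using smoothing theory for concordance spaces, and to dispatch the disc case via the Alexander trick.

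Taking vertical homotopy fibres, the square in question is $k$-cartesian if and only if the stabilisation map
\[
F(M)\coloneq \hofib\big(\BC(M)\to\BC^{\TOP}(M)\big)\lra F(M\times J)
\]
is $k$-connected. The main input is smoothing theory for concordance spaces of Morlet--Burghelea--Lashof--Rothenberg type (applicable since $d\ge 5$): it provides a natural equivalence between $F(M)$ and a space $\Sect_{\partial M}(E_M\to M)$ of sections rel.\,$\partial M$ of a bundle $E_M\to M$ whose fibre at $x\in M$ depends only on the tangent space $T_xM$, and in particular only on $d$. Naturality in codimension-zero smooth embeddings of $M$ ensures that the stabilisation map $F(M)\to F(M\times J)$ is induced by a map of bundles, hence pointwise by a map of fibres.

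For $M=D^d$, the Alexander trick applied to $D^d\times I\cong D^{d+1}$ and $D^d\times J\times I\cong D^{d+2}$ shows that both $\C^{\TOP}(D^d)$ and $\C^{\TOP}(D^d\times J)$ are contractible. Hence $F(D^d)\simeq\BC(D^d)$ and $F(D^d\times J)\simeq\BC(D^d\times J)$, and the map $F(D^d)\to F(D^d\times J)$ is precisely the stabilisation map on $\BC(D^d)$, which is $k$-connected by hypothesis. Since the fibre of $E_M$ at $x\in M$ is identified with the value of $F$ on a tangent disc, this shows that the induced map on fibres is $k$-connected for \emph{any} smooth $d$-manifold $M$. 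A standard obstruction-theoretic argument using a handle decomposition of $M$ then lifts this fibrewise $k$-connectedness to $k$-connectedness of the associated map of section spaces, as required.

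The main obstacle is verifying that the smoothing-theoretic equivalence is natural with respect to the stabilisation $M\rightsquigarrow M\times J$, so that the pointwise fibre comparison is genuinely governed by the $D^d$ case. The rational and $p$-local versions of the proposition follow by the same argument, since homotopy fibres, section spaces, the Alexander trick, and the smoothing-theoretic equivalence all commute with rationalisation and $p$-localisation.
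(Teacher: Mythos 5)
There is a genuine gap in the degree bookkeeping, and a missing input, and together they sink the argument as written.

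Your plan is to identify $F(M)=\hofib(\BC(M)\to\BC^{\TOP}(M))$ with a section space $\Sect_{\partial M}(E_M\to M)$ and compare fibres. The trouble is that the fibre of $E_M$ over a point is \emph{not} $F(D^d)$; it is a space $V$ (in the paper's notation, $F_d=\hofib(\TOP(d)/\oO(d)\to\TOP(d+1)/\oO(d+1))$), whereas $F(D^d)\simeq\Omega^d V$ since the sections over $D^d$ are taken rel boundary. So the hypothesis ``$F(D^d)\to F(D^d\times J)$ is $k$-connected'' only says that the fibre map $V\to W$ becomes $k$-connected after applying $\Omega^d$, which is a statement about its homotopy groups in degrees $\ge d$. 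If you then feed ``the fibre map is $k$-connected'' into the obstruction-theory step, you would get that the section-space map over a general $d$-manifold is only $(k-d)$-connected, not $k$-connected — a loss of $d$ degrees compared to what the proposition claims. To close the gap you need the fibre map to be $(k+d)$-connected, and for that you must control its homotopy groups in degrees $<d$ separately. This is exactly what the paper does, by invoking the Kirby--Siebenmann result that $F_d$ is $(d+1)$-connected for $d\ge 5$: then $G_d\coloneq\hofib(F_d\to\Omega F_{d+1})$ is automatically $d$-connected, and the hypothesis supplies the remaining vanishing up to degree $d+k-1$, which is the connectivity needed for the obstruction-theory step over a $d$-manifold. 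Your proposal never uses the hypothesis $d\ge 5$ and never invokes any low-degree connectivity of the fibres; this is the missing idea, not just a technicality.

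There is a second issue which you flag yourself but do not resolve: whether the stabilisation $M\rightsquigarrow M\times J$ is compatible with the smoothing-theoretic section-space description. In the form the paper uses (from Burghelea--Lashof), the crucial point is that $\hofib(\BC(M\times J)\to\BC^{\TOP}(M\times J))$ is again presented as a section space \emph{over $M$}, with fibre $\Omega F_{d+1}$, and the stabilisation map is induced by a bundle map over $M$ given fibrewise by $F_d\to\Omega F_{d+1}$. That is a specific and non-obvious statement; it is not simply naturality under codimension-zero embeddings, and your picture — where $F(M\times J)$ is a section space over $M\times J$ with fibre depending on $T_x(M\times J)$ — would compare bundles over two different bases, so ``pointwise comparison of fibres'' does not directly make sense. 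Once you adopt the Burghelea--Lashof formulation, the comparison does reduce to the fibre map $F_d\to\Omega F_{d+1}$, and the correct bookkeeping (together with Kirby--Siebenmann) gives the proposition as in the paper.
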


\begin{proof}
As explained in \cite[p.\,453--458]{BurgheleaLashofStability}, it follows from smoothing theory that there is a map of homotopy fibre sequences
	\begin{equation}\label{equ:smoothing-theory}
	\begin{tikzcd}
	\C(M)\rar\dar&\C^{\TOP}(M)\dar\rar&\Sect_\partial(F_d \times_{\oO(d)} \Fr(M) \to M)\dar\\
	\C(M\times J)\rar&\C^{\TOP}(M\times J)\rar&\Sect_\partial(\Omega F_{d+1} \times_{\oO(d)} \Fr(M) \to M)
	\end{tikzcd}
	\end{equation}
where the right terms are the space of sections, fixed on the boundary, of the indicated bundles where $F_d\coloneq\hofib(\TOP(d)/\oO(d)\ra \TOP(d+1)/\oO(d+1))$ is the homotopy fibre of the  map induced by taking products with the real line. The rightmost vertical map is induced by the stabilisation map $F_d\ra \Omega F_{d+1}$ \cite[p.\,450]{BurgheleaLashofStability}, so its homotopy fibre is given by a similar space of sections $\Sect_\partial(G_d \times_{\oO(d)} \Fr(M) \to M)$ of a bundle over $M$ whose fibre is the space $G_d\coloneq\hofib(F_d\ra \Omega F_{d+1})$. 
 
For $M = D^d$ the middle terms in \eqref{equ:smoothing-theory} are contractible by the Alexander trick, so since $\C(D^d)\ra\C(D^{d}\times J)$ is $(k-1)$-connected by assumption it follows that the section space $\Omega \Sect_\partial(G_d \times_{\oO(d)} \Fr(D^d) \to D^d) \simeq \Omega^{d+1}G_d$ is $(k-2)$-connected. Moreover, as $F_d$ is $(d+1)$-connected for $d \geq 5$ by \cite[Essay V.5.2]{KirbySiebenmann}, the space $G_d$ is $d$-connected and thus in fact $(d+k-1)$-connected. For a general $M$, this implies that the right map in \eqref{equ:smoothing-theory} is $k$-connected by obstruction theory, so the left square is $(k-1)$-cartesian and the claim follows.
 
The rational (or $p$-local) addendum follows by the same argument: the case of a disc shows that $G_d$ is $d$-connected and rationally (or $p$-locally) $(d+k-1)$-connected, so the claim follows again from obstruction theory.\end{proof}

\begin{cor}For a smoothable $1$-connected compact spin $d$-manifold $M$ with $d\ge6$,
	\[\BC^\Top(M) \lra \BC^\Top(M \times J)\]
is rationally $\min(d-4,\lfloor\tfrac{3}{2}d\rfloor-9)$-connected.
\end{cor}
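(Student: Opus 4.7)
The plan is to combine the two main results just established in this section: the rational smooth concordance stability of \cref{cor:rational-stable-range} and the smooth-to-topological comparison \cref{prop:smooth-vs-topological-stability}. Fix once and for all a smooth structure on $M$ (which exists by smoothability) and write $N\coloneqq\min(d-4,\lfloor 3d/2\rfloor-9)$.

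The first step is to establish rational smooth stability with connectivity $N$ for both $M$ and the standard disc $D^d$. Each is compact, $1$-connected, and $2$-parallelisable (by hypothesis for $M$; trivially for $D^d$), and the inequality $2<d/2$ holds since $d\ge 6$. Applying \cref{cor:rational-stable-range} with $k=2$ produces a homomorphism from $\pi_i(\C(-\times J),\C(-))\otimes\bfQ$ into the target graded group (which is $\bfQ$ at $i=d-3$ and zero otherwise), which is an isomorphism in the range $i<\min(d-2,\lfloor 3d/2\rfloor-8)$. Since $N$ is strictly less than both $d-3$ and $\min(d-2,\lfloor 3d/2\rfloor-8)$, the target vanishes on $\{0,\ldots,N\}$, giving $\pi_j(\C(-\times J),\C(-))\otimes\bfQ=0$ for $j\le N$. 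Translated to $\BC$-language, both $\BC(M)\to\BC(M\times J)$ and $\BC(D^d)\to\BC(D^d\times J)$ are rationally $N$-connected.

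Next I would apply the rational version of \cref{prop:smooth-vs-topological-stability} with $k=N$, whose hypothesis is precisely the rational $N$-connectivity of the smooth stabilisation for $D^d$ above. It yields that the square
\[
\begin{tikzcd}
\BC(M)\rar\dar&\BC^{\TOP}(M)\dar\\
\BC(M\times J)\rar&\BC^{\TOP}(M\times J)
\end{tikzcd}
\]
is rationally $N$-cartesian, or equivalently that the induced map
\[
\hofib\bigl(\BC(M)\to\BC(M\times J)\bigr)\lra\hofib\bigl(\BC^{\TOP}(M)\to\BC^{\TOP}(M\times J)\bigr)
\]
on horizontal homotopy fibres is rationally $N$-connected.

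To finish, I would combine the two inputs. The rational $N$-connectivity of $\BC(M)\to\BC(M\times J)$ makes its homotopy fibre rationally $(N-1)$-connected, and the rational $N$-connectivity of the map on fibres from the previous step (which in particular induces isomorphisms on rational homotopy groups in degrees $<N$) then forces the target homotopy fibre to be rationally $(N-1)$-connected as well; this is exactly the claim that $\BC^{\TOP}(M)\to\BC^{\TOP}(M\times J)$ is rationally $N$-connected. There is no real conceptual obstacle here: the argument is a formal combination of the two previous results, and the main care required is bookkeeping---tracking connectivity conventions and verifying that the bound $N=\min(d-4,\lfloor 3d/2\rfloor-9)$ emerges consistently at the end.
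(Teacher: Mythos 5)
Your proposal is correct and follows exactly the same route as the paper: apply \cref{cor:rational-stable-range} with $k=2$ to get rational connectivity of the smooth stabilisation maps for $D^d$ and $M$, feed the disc case into \cref{prop:smooth-vs-topological-stability} (rational version), and combine the cartesianness of the resulting square with the smooth stability for $M$. The only (harmless) difference is that you downgrade the rational connectivity of $\BC(M)\to\BC(M\times J)$ to $N=\min(d-4,\lfloor 3d/2\rfloor-9)$ before applying the proposition, whereas the paper uses the slightly sharper value $N+1=\min(d-3,\lfloor 3d/2\rfloor-8)$ that actually comes out of \cref{cor:rational-stable-range}; both give at least the stated $N$-connectedness of the topological stabilisation map.
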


\begin{proof}This follows from \cref{prop:smooth-vs-topological-stability} since the maps $\BC(D^d)\ra \BC(D^{d+1})$ and $\BC(M) \to \BC(M \times J)$ are rationally $\min(d-3,\lfloor\tfrac{3}{2}d\rfloor -8)$-connected by \cref{cor:rational-stable-range} for $k=2$.
\end{proof}

\begin{rem}As the forgetful map $\C^\PL(M) \to \C^\Top(M)$ from the PL-version of the space of concordance homeomorphisms is an equivalence for all PL-manifolds $M$ of dimension $\geq 5$ \cite[Theorem 6.2]{BurgheleaLashofII}, the results of this section also apply to $\C^\PL(M)$.\end{rem}

\bibliographystyle{amsalpha}
\bibliography{literature}

\bigskip

\end{document}